\newtheorem{theorem}{Theorem}[section]
\newtheorem{lemma}[theorem]{Lemma}
\newtheorem{proposition}[theorem]{Proposition}
\newtheorem{corollary}[theorem]{Corollary}
\newtheorem{definition}[theorem]{Definition}
\numberwithin{equation}{section}
\begin{document}

\newcommand{\cc}{\mathfrak{c}}
\newcommand{\N}{\mathbb{N}}
\newcommand{\Q}{\mathbb{Q}}
\newcommand{\R}{\mathbb{R}}
\newcommand{\PP}{\mathbb{P}}
\newcommand{\I}{\mathbb{I}}
\newcommand{\forces}{\Vdash}

\title{On large indecomposable Banach spaces}

\author{Piotr Koszmider}
\thanks{The  author was partially supported by Polish Ministry of
Science and Higher Education research grant N N201
386234. } 
\address{Institute of Mathematics, Polish Academy of Sciences,
ul. \'Sniadeckich 8,  00-956 Warszawa, Poland}

\email{\texttt{P.Koszmider@impan.pl}}

\subjclass{}
\date{}
\keywords{}

\begin{abstract}
Hereditarily indecomposable Banach spaces may have density at most
continuum (Plichko-Yost, Argyros-Tolias). In this paper we show that this cannot be proved for
indecomposable Banach spaces. We provide the first  example of an indecomposable
Banach space of density $2^{2^\omega}$. The space exists consistently, 
 is of the form $C(K)$ and
it has few operators in the sense that any  bounded linear operator $T:C(K)\rightarrow C(K)$
satisfies $T(f)=gf+S(f)$ for every $f\in C(K)$, where $g\in C(K)$ and $S:C(K)\rightarrow C(K)$ is weakly
compact (strictly singular). 
\end{abstract}

\maketitle

\markright{}

\section{Introduction}

We say that an infinite dimensional Banach space $X$ is indecomposable (I) if 
whenever $X=A\oplus B$, then either $A$ or $B$ is finite dimensional.
First indecomposable Banach spaces constructed by Gowers and Maurey were hereditarily indecomposable (HI)
(see \cite{gowersmaurey}, \cite{linden}).
That is, all their infinite dimensional subspaces were indecomposable. 
These spaces were separable, but many nonseparable constructions followed e.g., \cite{tolias}.
However 
on every HI space there is an injective operator
into $l_\infty$, and so HI spaces have density at most $2^\omega$  (see \cite{plickoyost}).

Different indecomposable Banach spaces were constructed in \cite{few} and in several papers that followed
like \cite{plebanek}, \cite{rogerio}, \cite{iryna}. These  spaces are of the form $C(K)$ and 
so they have many decomposable subspaces, actually they must
 have all separable Banach spaces as subspaces, and cannot be separable. Most of these constructions
are of density $2^\omega$ with the exception of \cite{rogerio} which has consistently a smaller density.

In this paper we show that  indecomposable Banach spaces
of the form $C(K)$  may have density bigger than $2^\omega$
which provides the first example of such Banach spaces. Our construction is not absolute,
that is, we prove that it is consistent that such spaces exist. 
Constructions of Banach spaces which essentially depend on additional set-theoretic axioms or methods
are not uncommon among nonseparable spaces. Probably the most known examples are of Shelah from \cite{shelah2} and
the so called Kunen space (see \cite{negrepontis}) where despite its nonseparability we
do not have uncountable biorthogonal systems. Newer important examples include generic Banach spaces of
Lopez-Abad and Todorcevic (\cite{stevogeneric}) or Banach spaces without support sets of \cite{rolewicz}.
The necessity of these additional combinatorial methods  in some of the above mentioned results
is shown in \cite{stevobio}.

The main result of this paper is:

\begin{theorem}\label{maintheorem} It is consistent with $2^\omega=\omega_1$,
$2^{\omega_1}=\omega_2$  that there is  a compact Hausdorff space $L$ such that:
\begin{enumerate}
\item The density of $C(L)$ is $2^{2^\omega}=\omega_2>\omega_1=2^\omega$,
\item Every linear bounded operator $T: C(L)\rightarrow C(L)$ satisfies
$T(f)=gf+S(f)$ for every $f\in C(L)$ where $g\in C(L)$ and $S$ is weakly compact (strictly singular),
\item $C(L)$ is an indecomposable Banach space, in particular it has no
infinite dimensional complemented subspaces of density less or equal to  $2^\omega$,
\item $C(L)$ is not isomorphic to any of its proper subspaces nor any
of its proper quotients, in particular it is not isomorphic to its hyperplanes.
\end{enumerate}
\end{theorem}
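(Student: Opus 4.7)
The plan is to build $L$ as the Stone space of a Boolean algebra $\mathfrak A$ of cardinality $\omega_2$, constructed via a countable-support iteration $\langle P_\alpha,\dot Q_\alpha:\alpha<\omega_2\rangle$ of $\sigma$-closed forcings over a ground model of GCH. Because $\sigma$-closedness adds no reals, CH is preserved in the extension and so $2^\omega=\omega_1$; the iteration of length $\omega_2$ blows $2^{\omega_1}$ up to $\omega_2$ while preserving cardinals, and the resulting Boolean algebra has $\omega_2$ many elements, giving $C(L)$ of density $\omega_2$ and confirming (1). At each stage $\dot Q_\alpha$ adjoins a single new clopen set $U_\alpha\subseteq L$ to the partial algebra $\mathfrak A_\alpha$ built so far, chosen generically to block a threat predicted by a $\Diamond_{\omega_2}$-sequence: either a candidate bounded operator $T$, or a candidate nontrivial projection, on the partial space $C(L\upharpoonright\alpha)$.

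The heart of the argument is (2). Any bounded $T:C(L)\to C(L)$ is determined by its action on the norm-dense set $\{\chi_V:V\in\mathfrak A\}$, which is in turn encoded by a bounded family of Radon measures on $L$. At a stage $\alpha$ of cofinality $\omega_1$, the diamond supplies a candidate $T_\alpha$ on $C(L\upharpoonright\alpha)$; if $T_\alpha$ is not already of the form $M_g+S$ with $g\in C(L\upharpoonright\alpha)$ and $S$ weakly compact, then by the Bartle--Dunford--Schwartz representation of operators on $C(K)$ and Pe{\l}czy{\'n}ski's property (V), one can extract a pairwise disjoint normalised sequence $(\chi_{V_n})$ for which $(T_\alpha(\chi_{V_n}))$ is not weakly null modulo any multiplier $M_h$. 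The generic $U_\alpha$ is then chosen so as to separate these witnesses in a configuration incompatible with any subsequent extension of $T_\alpha$ to an operator of the desired form in the final $C(L)$. A reflection argument, combined with the cardinality bound on the set of bounded operators on $C(L)$ in the extension, ensures that every operator in $V[G]$ is ultimately captured by this bookkeeping.

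Properties (3) and (4) are derived abstractly from (2). If $P=P^2$ is a projection, writing $P=M_g+S$ and expanding $P^2-P=0$ shows $M_{g^2-g}$ is weakly compact, which on any $C(K)$ forces $g^2=g$; arranging $L$ to be connected (by a final forcing step that collapses any nontrivial clopen) then gives $g\in\{0,1\}$, so $P$ or $I-P$ is itself weakly compact and thus has reflexive, hence finite-dimensional, range. For (4), any isomorphic embedding $T:C(L)\to C(L)$ written as $M_g+S$ must have $g$ nowhere zero (lest $T$ fail to be bounded below), so $M_g$ is invertible and $M_g^{-1}T=I+M_g^{-1}S$ is the identity plus a strictly singular operator, hence Fredholm of index zero and therefore surjective; the dual argument handles the quotient case. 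The principal obstacle is the preservation of the few-operators property through limit stages of cofinality $\omega_1$, where operators on the final $C(L)$ need not reflect to any proper initial segment; this requires a delicate enumeration of nice measure-theoretic approximations by $\Diamond_{\omega_2}$ and a proof that the combinatorial choice of each $U_\alpha$ remains robust against the remaining $\omega_2$ stages of the iteration.
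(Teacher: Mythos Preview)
Your proposal has a fundamental structural problem: you build $L$ as the Stone space of a Boolean algebra $\mathfrak A$, which makes $L$ zero-dimensional. But then property (3) cannot follow from (2) the way you sketch. If $U\in\mathfrak A$ is any nontrivial clopen, the multiplication operator $M_{\chi_U}$ is a projection which is already of the form allowed in (2) with $g=\chi_U$ and $S=0$; your argument in (3) correctly deduces $g^2=g$ and then needs $L$ connected to force $g\in\{0,1\}$, but in your construction $g=\chi_U$ is a perfectly good idempotent in $C(L)$ and neither $P$ nor $I-P$ is weakly compact. So the $C(L)$ you build is manifestly decomposable.

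Your parenthetical fix, ``arranging $L$ to be connected (by a final forcing step that collapses any nontrivial clopen)'', is not a solution but a restatement of the main difficulty. Passing to a connected quotient of a zero-dimensional space after the fact gives no control over operators on the new $C(L)$; there is no reason the blocking of bad operators achieved by the $U_\alpha$'s survives such a quotient. This is exactly the obstacle the paper isolates: the earlier construction in \cite{big}, on which your sketch is essentially modeled, already yields a totally disconnected $K$ satisfying (1) and (2), and the entire content of the present paper is a redesign of the forcing so that connectedness is maintained at every stage of the approximation. The paper does this not with Boolean algebras but with countable families $\mathcal F\subseteq C_I(K)$ of $[0,1]$-valued functions on a fixed extremally disconnected $K$, taking $L=\nabla\mathcal F_G\subseteq[0,1]^{\mathcal F_G}$; connectedness of all finite subproducts is verified throughout via Lemmas~\ref{subproductsconnected}, \ref{connectedgraph} and \ref{strongconnected}, and the analogue of ``adding a clopen'' is adding an indestructible supremum of a pairwise disjoint sequence while simultaneously recording a promise that two specified sequences of points keep intersecting closures (Lemma~\ref{mainextensionlemma}). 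The forcing is a single $\sigma$-closed, $\omega_2$-c.c.\ poset $\Q$, not an iteration, and no $\Diamond_{\omega_2}$ is invoked.
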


This is an immediate consequence of \ref{finaltheorem}.
We do not know if the existence
of such spaces may be proved in ZFC or if this an undecidable problem.

Many other basic related questions remain open. For example, whether there is any bound of the densities of indecomposable Banach spaces (a question due to S. Argyros), however 
one can show that there is a bound on the densities of
Banach spaces with the properties that we obtain.

Our construction is related to the construction from \cite{big} of a $C(K)$ satisfying the first two items of the above in
\ref{maintheorem}.  The space $K$ of \cite{big} is totally disconnected and so has nontrivial projections all of which
can be characterized as being  strictly singular perturbations of  multiplications by a characteristic functions
of  clopen sets. The modification however is very indirect,  as standard amalgamations of approximating subspaces
from \cite{big} do not preserve the connectedness. Actually quite complicated partial order
which is used here to force the compact $L$ is designed to take care of the connectedness
of the amalgamation. In a sense, what is required is that the metrizable approximations to the final $L$ must
predict all possible future amalgamations. So, much more complicated structure of
these approximations is required in the present paper than in \cite{big}.

 It would be very interesting to have a
more direct argument which would give the connectedness of the amalgamation.
We also feel that the right language for such constructions should be that of Banach algebras,
that is, one should rather construct a $C(L)$ than $L$. Then the connectedness of $L$ is, of course, equivalent
the nonexistence of nontrivial idempotent elements in the algebra $C(L)$ which in this context
(of few operators) gives the nonexistence of nontrivial (in more relaxed sense of finite dimensional perturbation) idempotents
in the algebra $\mathcal B(C(L))$ of operators on $C(K)$.  However the question of the existence of 
nontrivial idempotents in an algebra given by its generators could be a difficult question 
 (e.g., \cite{voiculescu}, \cite{idempotents}) which requires a complicated machinery.

 Our approach has  this flavour of considering the algebra $C(L)$ only partially.
Namely, we take an order complete Banach algebra $C(K)$ for $K$ extremely disconnected
and carefully choose a subfamily $\mathcal F\subseteq C(K)$ consisting of
functions whose ranges are included in $[0,1]$ which generates
the  $C(L)$ that is
$L=(\Pi\mathcal F)[K]$ where $\Pi\mathcal F: K\rightarrow [0,1]^{\mathcal F}$
is given by $(\Pi\mathcal F)(x)(f)=f(x)$ for any $f\in\mathcal F$. 
However most of our arguments are done on the level of underlying compact spaces
and are, at first sight, quite detached from the Banach space theoretic structure of the
induced function spaces.

Section 1 mainly deals with the extremely disconnected compact K and 
the spaces of the form $(\Pi\mathcal F)[K]$ for  $\mathcal F\subseteq C(K)$.
In Section 2 we analyze an auxiliary partial order of some countable 
subsets of $C(K)$ which could approximate our final $\mathcal F$ which would 
determine $L$ as above. Though the final space $L$ has no nontrivial continuous
mappings into itself, the approximating spaces  have many homeomorphism
which help us in amalgamations of the  approximations into better approximations.
Section 4 is devoted to the analysis of the final partial order of approximations
which incorporates the auxiliary one from the previous section.
In the next Section 5 we consider adding suprema of sequences of functions
to our approximating families $\mathcal F\subseteq C(K)$. 
This section contains results that generalize the techniques of Section 4 of \cite{few}.
Section 6 is devoted to the main extension lemma, where an approximation
to the final $\mathcal F\subseteq C(K)$ can be enriched by a supremum of
a sequence of pairwise disjoint functions. The last two sections 7 and 8
deal with the final realization of the construction. Only in these two last sections
some knowledge of consistency proofs by forcing is required from the reader.

It is worthy to mention that our $L$ is strongly rigid compact space, that is its only
continuous mappings are constant functions or the identity (see 5.4. of \cite{centripetal}).
A first compact strongly rigid space was obtained by H. Cook (\cite{cook}) and V. Trnkova
proved that there are no bounds on the weight nor size of such spaces (\cite{trnkova}) in ZFC.
However,  $C(K)$s for the compact spaces of \cite{cook} or \cite{trnkova}
are decomposable. On the other hand
there is a necessary and sufficient condition for $C(K)$ to have few operators as above
in terms of some rigidity of the dual ball of $C(K)$ (see Theorem 23 of  \cite{fewsur}).
As in the case of the construction from \cite{big} we conjecture that the space
of this paper can be obtained directly from a combinatorial
principle called Velleman's simplified morass (\cite{velleman}) instead of
a forcing argument.

\noindent We would like to thank Rog\'erio Fajardo for countless hours of discussions 
back in the years 2005-07
concerning the problem of large indecomposable spaces, to Roman Pol for his remarks related to strongly rigid compact spaces
and to Adam Skalski for his remarks on idempotent elements in $C^*$-algebras.

\noindent Most unexplained set-theoretic and logical concepts can be found
in \cite{kunen} or \cite{jech}, topological terminology is based on
\cite{engelking} and the basics on $C(K)$ Banach spaces can be
found in \cite{semadeni}.

\section{Continuous images of some extremely disconnected compact space}

 Let  $A, A'\subseteq \omega_2$. We say that $A<A'$ if each element
of $A$ is smaller in the ordinal order on $\omega_2$ than all elements of $A'$.
Consider $Fr(A)$, the free Boolean algebra generated by 
independent family $(a_\xi:\xi\in A)$ and its completion $Co(A)$.
It is well known that $Fr(A)$ and so $Co(A)$ satisfy the c.c.c.,
so every element of $Co(A)$ can be considered as a supremum of an
antichain from $Fr(A)$. 

If $A\subseteq B$, then there is a natural embedding $i_{BA}: Co(A)\rightarrow Co(B)$
which induces, by the Stone duality a continuous surjection $\rho_{AB}: K_B
\rightarrow K_A$, where $K_A$ and $K_B$ are the Stone spaces of $Co(A)$ and
$Co(B)$ respectively.  We put $K=K_{\omega_2}$. 
If $L$ is a compact Hausdorff space, then $C(L)$ denotes
the Banach space of all real-valued continuous functions on $L$
with the supremum norm.
$C_I(K)$ will denote the set of all elements of $C(K)$ whose ranges are included
in $[0,1]$.

In general 
if $\mathcal A$ is a Boolean algebra, then $S(\mathcal A)$ denotes
its Stone space,  $[a]$ will denote the basic clopen set of the Stone space
of a Boolean algebra where $a$ belongs. So, in particular
$K_A=S(Co(A))$. We will use the notation $S(Co(A))$ when we want to exploit the
fact that its points are subsets of $Co(A)$ (ultrafilters).

Suppose $\alpha\in\omega_2$. Of course the Stone
space of $Fr([\alpha,\alpha+\omega))$  is homeomorphic with
the Cantor set $2^\omega$. By the standard surjection from $2^\omega$ onto
$[0,1]$  we mean the function $f(x)=\Sigma_{n\in\N}{x(i)\over 2^i}$ which is irreducible.
$d_\alpha: K\rightarrow [0,1]$ will denote the function
obtained in similar way from $S(Fr([\alpha,\alpha+\omega]))$ instead of $2^\omega$:

\begin{definition} Suppose $\alpha\in\omega_2$.
$d_\alpha: K\rightarrow [0,1]$ is defined by
$$d_\alpha(x)=\sum_{n\in\N}{t_x(i)\over 2^i},$$
where $t_x(i)=1$ if $a_{\alpha+i}\in x$ and $t_x(i)=0$ if $a_{\alpha+i}\not\in x$.
\end{definition}

\begin{definition}\label{amalgamationalgebras}
Suppose that $A, B\subseteq \omega_2$.
$Co(A)\otimes Co(B)$ is the subalgebra of $Co(A\cup B)$ generated
by $Co(A)\cup Co(B)$. 
\end{definition}

\begin{lemma}\label{ultrafilters1} Let $A, B\subseteq \omega_2$. Let $u\in S(Co(A))$ and $v\in S(Co(B))$
be such that $u\cap Co(A\cap B)=v\cap Co(A\cap B)$. Then there is a unique ultrafilter
$w$ in the Stone space of $Co(A)\otimes Co(B)$  such that $u,v\subseteq w$.
\end{lemma}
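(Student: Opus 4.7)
The plan is to prove existence and uniqueness of $w$ separately, with the crux being a separation property inside $Co(A\cup B)$. \emph{Uniqueness} is essentially forced by the presentation of the algebra. Since $Co(A)\otimes Co(B)$ is generated, as a Boolean algebra, by $Co(A)\cup Co(B)$, every element admits a disjunctive normal form $c=\bigvee_{i\le n}(a_i\wedge b_i)$ with $a_i\in Co(A)$ and $b_i\in Co(B)$. Any ultrafilter $w\supseteq u\cup v$ must then satisfy $c\in w$ iff some $a_i\wedge b_i\in w$, iff $a_i\in u$ and $b_i\in v$ for some $i\le n$ (using that $w\cap Co(A)$ and $w\cap Co(B)$ are ultrafilters extending $u$ and $v$ in $Co(A)$ and $Co(B)$, and hence equal to them). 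This determines $w$.

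For \emph{existence}, by Zorn's lemma it suffices to show that $u\cup v$ has the finite intersection property in $Co(A)\otimes Co(B)$. Since $u$ and $v$ are themselves closed under meets, this reduces to showing that $a\wedge b\neq 0$ whenever $a\in u$ and $b\in v$. I would derive this from the following separation property: if $a\in Co(A)$, $b\in Co(B)$ and $a\wedge b=0$ in $Co(A\cup B)$, then there exists $c\in Co(A\cap B)$ with $a\le c$ and $c\wedge b=0$. Granted this, suppose for contradiction that $a\wedge b=0$ for some $a\in u$, $b\in v$, and fix the witness $c$. From $a\le c$ and $a\in u$ we get $c\in u$, so $c\in u\cap Co(A\cap B)=v\cap Co(A\cap B)$, i.e., $c\in v$; simultaneously $b\le\neg c$ and $b\in v$ give $\neg c\in v$, contradicting that $v$ is an ultrafilter.

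To establish the separation property, I descend to the dense subalgebra $Fr$. Using the c.c.c.\ (recalled just before the lemma), write $a=\sup X$ and $b=\sup Y$ for antichains $X\subseteq Fr(A)$, $Y\subseteq Fr(B)$; distributivity of meet over antichain-suprema gives $a\wedge b=\sup\{x\wedge y:x\in X,\,y\in Y\}=0$, whence each $x\wedge y=0$ in $Fr(A\cup B)$. The classical amalgamation property for free Boolean algebras, viewing $Fr(A\cup B)$ as the free product of $Fr(A)$ and $Fr(B)$ amalgamated over $Fr(A\cap B)$, then yields, for each pair, some $c_{xy}\in Fr(A\cap B)$ with $x\le c_{xy}$ and $c_{xy}\wedge y=0$. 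I complete the argument inside the complete algebra $Co(A\cap B)$ by setting $c_x=\inf_{y\in Y}c_{xy}$ and $c=\sup_{x\in X}c_x$, and verifying $a\le c$ and $c\wedge b=0$ using distributivity. The main obstacle is precisely this final transition from $Fr$ to $Co$: one has to confirm that completion preserves the amalgamation, and the c.c.c.\ together with distributivity over antichain-suprema is exactly the tool that makes the required infs and sups collapse within $Co(A\cap B)$.
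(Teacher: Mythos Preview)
Your proof is correct, but it takes a different route from the paper's. The paper argues the finite intersection property directly: writing two hypothetically disjoint elements as countable suprema $\Sigma_i(a_i\wedge b_i)\in u$ and $\Sigma_j(a_j'\wedge c_j)\in v$ with $a_i,a_j'\in Fr(A\cap B)$, $b_i\in Fr(A\setminus B)$, $c_j\in Fr(B\setminus A)$, it observes that the larger elements $\Sigma_i a_i$ and $\Sigma_j a_j'$ both lie in $u\cap Co(A\cap B)=v\cap Co(A\cap B)$, so some $a_i\wedge a_j'\neq 0$, and then independence of the three free factors forces $a_i\wedge a_j'\wedge b_i\wedge c_j\neq 0$, contradicting disjointness. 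This is shorter and uses only the explicit product structure of the free algebra. The paper does not treat uniqueness at all; your disjunctive-normal-form argument for that part is a genuine addition.

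Your route through the interpolation property (``$a\wedge b=0$ with $a\in Co(A)$, $b\in Co(B)$ implies a separating $c\in Co(A\cap B)$'') is more conceptual and gives a reusable statement, but it leans on a fact you only gesture at in the last sentence: that $Co(A\cap B)$ is a \emph{complete} subalgebra of $Co(A\cup B)$. You need this when you take $c_x=\inf_{y}c_{xy}$ in $Co(A\cap B)$ and then assert $x\le c_x$; the inequality $x\le c_{xy}$ lives in $Co(A\cup B)$, so it only passes to the infimum if the two infima agree. The same issue recurs when you distribute $c\wedge b$ over the suprema defining $b$. The claim is true---$Fr(A\cap B)$ is regular in $Fr(A\cup B)$ because any partition of $1$ by elements of $Fr(A\cap B)$ remains a partition in $Fr(A\cup B)$, and completions of regular subalgebras are complete subalgebras---but ``c.c.c.\ together with distributivity'' is not quite that argument. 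The paper's direct computation sidesteps this entirely by never taking infima across the inclusion.
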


\begin{proof}
If $u\cup v$ does not have the finite intersection property, there are
disjoint $\Sigma (a_i\cap b_i)\in u$ and $\Sigma (a_i'\cap c_i)\in v$ such that
$a_i, a_i'\in Fr(A\cap B)$, $b_i\in Fr(A\setminus B)$ and
$c_i\in Fr(B\setminus A)$. Hence $\Sigma a_i\in u\cap Co(A\cap B)$ 
and $\Sigma a_i'\in v\cap Co(A\cap B)$ as they are bigger elements.
The hypothesis then, gives that $a_i\cap a_j'\not=0$ for
some $i, j$.  But then $a_i\cap a_j'\cap b_i\cap c_j\not=0$ by
the independence of all elements of
$Fr(A\cap B)$, $Fr(A\setminus B)$, $Fr(B\setminus A)$ which contradicts
the disjointness of the original elements.
\end{proof}

\begin{lemma}\label{ultrafilters} Let $A, B\subseteq \omega_2$. Let $u\in S(Co(A))$ and $v\in S(Co(B))$
be such that $u\cap Co(A\cap B)=v\cap Co(A\cap B)$. Then there are ultrafilters
$w\in K_{A\cup B}$ and $w'\in K$ such that $u,v\subseteq w, w'$.
\end{lemma}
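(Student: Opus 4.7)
The plan is to bootstrap from Lemma \ref{ultrafilters1}, whose conclusion already provides an ultrafilter $w_0$ in the Stone space of the subalgebra $Co(A)\otimes Co(B)\subseteq Co(A\cup B)$ containing $u\cup v$. The remaining work is purely an ultrafilter-extension argument: since $Co(A)\otimes Co(B)$ sits inside $Co(A\cup B)$, and $Co(A\cup B)$ sits inside $Co(\omega_2)$ (via the natural embeddings $i_{A\cup B,A}$, $i_{A\cup B,B}$ and $i_{\omega_2,A\cup B}$ from the opening paragraph of the section), any ultrafilter on a subalgebra extends to an ultrafilter on the larger algebra.

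Concretely, first I would apply Lemma \ref{ultrafilters1} to get the unique $w_0$ with $u\cup v\subseteq w_0$ in the Stone space of $Co(A)\otimes Co(B)$. The set $w_0$, viewed as a subset of $Co(A\cup B)$, is closed under finite meets and upward closed within $Co(A)\otimes Co(B)$, so it generates a proper filter $\mathcal{F}$ in $Co(A\cup B)$ with the finite intersection property (it is proper because $0\notin w_0$). By Zorn's lemma (or the Boolean prime ideal theorem), $\mathcal{F}$ extends to an ultrafilter $w$ of $Co(A\cup B)$, i.e.\ $w\in K_{A\cup B}$. Then $u\cup v\subseteq w_0\subseteq w$.

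Next I would extend $w$ to $w'$ by the same Zorn's lemma argument: viewing $w$ as a filter basis in the larger algebra $Co(\omega_2)$ through the embedding $i_{\omega_2,A\cup B}$, it retains the finite intersection property and therefore extends to an ultrafilter $w'\in S(Co(\omega_2))=K$, which still satisfies $u\cup v\subseteq w\subseteq w'$.

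There is no real obstacle here; the content of the lemma has been absorbed into the previous one, and what remains is standard Boolean-algebraic extension. The only mild point worth verifying explicitly is the identification of $u$ and $v$ (originally ultrafilters of $Co(A)$ and $Co(B)$) with subsets of $Co(A\cup B)$ via $i_{A\cup B,A}$ and $i_{A\cup B,B}$, so that the inclusions $u\subseteq w$ and $v\subseteq w$ literally make sense; once one fixes this notational convention, the proof is a two-line Zorn's lemma chase.
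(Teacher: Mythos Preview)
Your proposal is correct and follows exactly the paper's approach: the paper's proof is the one-liner ``Extend the ultrafilter from \ref{ultrafilters1},'' and you have simply written out this extension step in detail via the standard filter-extension argument.
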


\begin{proof}
Extend the ultrafilter from \ref{ultrafilters1}.
\end{proof}

\begin{corollary}\label{disjointultrafilters}
Suppose $A, B\subseteq \omega_2$ are disjoint and 
$u\in K_A$ and $v\in K_B$. Then there are ultrafilters $w\in K_{A\cup B}$
and  $w'\in K$ such that $u,v\subseteq w, w'$.
\end{corollary}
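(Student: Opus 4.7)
The plan is to reduce this immediately to Lemma \ref{ultrafilters}. Since $A$ and $B$ are disjoint, $A\cap B=\emptyset$, so the Boolean algebra $Co(A\cap B)=Co(\emptyset)$ is the trivial two-element algebra $\{0,1\}$. In that case every ultrafilter of any $Co(C)$ intersects $Co(\emptyset)$ in exactly the singleton $\{1\}$, so for arbitrary $u\in K_A$ and $v\in K_B$ one automatically has
\[
u\cap Co(A\cap B)=\{1\}=v\cap Co(A\cap B).
\]
This means the hypothesis of Lemma \ref{ultrafilters} is satisfied for free, so the lemma produces ultrafilters $w\in K_{A\cup B}$ and $w'\in K$ with $u,v\subseteq w$ and $u,v\subseteq w'$, which is exactly the conclusion.

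The only genuine content is the observation that disjointness makes the compatibility condition on the intersection trivially true; everything else is handed to us by the previous lemma. There is no real obstacle here, since Lemma \ref{ultrafilters1} already does the combinatorial work (checking that $u\cup v$ has the finite intersection property using independence of the generators in $Fr(A\setminus B)$, $Fr(B\setminus A)$, $Fr(A\cap B)$), and Lemma \ref{ultrafilters} extends that ultrafilter further via the standard fact that any proper filter in a Boolean algebra extends to an ultrafilter and that the inclusion $Co(A)\otimes Co(B)\hookrightarrow Co(A\cup B)\hookrightarrow Co(\omega_2)$ allows pushing the ultrafilter all the way up to $K$.

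Thus the proof is one line: apply Lemma \ref{ultrafilters}, noting that disjointness of $A$ and $B$ makes the hypothesis vacuous.
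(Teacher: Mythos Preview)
Your proposal is correct and is exactly the intended argument: the paper states this as a corollary of Lemma~\ref{ultrafilters} with no further proof, and the only point is that disjointness of $A$ and $B$ trivializes the compatibility hypothesis $u\cap Co(A\cap B)=v\cap Co(A\cap B)$, precisely as you observe.
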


\begin{definition}\label{definitionpinabla}
If $F\subseteq C_I(K)$, then by $\Pi F: K\rightarrow [0,1]^F$ we mean 
the function defined by
$$(\Pi F(x))(f)=f(x),$$
for each $x\in K$ and each $f\in F$.
The image of $\Pi F$ in
$[0,1]^F$ is denoted by $\nabla F$.
If $\mathcal G\subseteq\mathcal F$, then $\pi_{\mathcal G \mathcal F}$ denotes
the projection from $\nabla\mathcal F$ onto $\nabla \mathcal G$.
\end{definition}

\begin{lemma}\label{subproductsconnected}
Suppose that $\mathcal F\subseteq C_I(K)$. $\nabla \mathcal F$ is
connected if and only if $\nabla\{f_1,...f_m\}$ is connected for any finite
$f_1,...f_m\in \mathcal F$.
\end{lemma}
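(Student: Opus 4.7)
The $(\Rightarrow)$ direction is immediate from the definitions: for any finite $\mathcal{G}=\{f_1,\ldots,f_m\}\subseteq\mathcal{F}$, the projection $\pi_{\mathcal{G}\mathcal{F}}\colon\nabla\mathcal{F}\to\nabla\mathcal{G}$ is continuous, and it is surjective because $\nabla\mathcal{G}=\Pi\mathcal{G}[K]=\pi_{\mathcal{G}\mathcal{F}}[\Pi\mathcal{F}[K]]=\pi_{\mathcal{G}\mathcal{F}}[\nabla\mathcal{F}]$. Hence $\nabla\mathcal{G}$ is a continuous image of the connected space $\nabla\mathcal{F}$, so connected.

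For the $(\Leftarrow)$ direction I would argue contrapositively, witnessing any disconnection of $\nabla\mathcal{F}$ already at some finite stage. Suppose $\nabla\mathcal{F}=U\sqcup V$ with $U,V$ nonempty, disjoint, and clopen in $\nabla\mathcal{F}$. Since $K$ is compact and $[0,1]^{\mathcal{F}}$ is Hausdorff, $\nabla\mathcal{F}=\Pi\mathcal{F}[K]$ is compact, hence so are $U$ and $V$. Compact Hausdorff spaces are normal, so inside $[0,1]^{\mathcal{F}}$ I can separate them by disjoint open sets $W_U\supseteq U$ and $W_V\supseteq V$.

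The key step is to fold the separation down onto finitely many coordinates. Cover $U$ by finitely many basic open sets of $[0,1]^{\mathcal{F}}$ contained in $W_U$, and similarly cover $V$ by basic opens contained in $W_V$; each such basic open depends on only finitely many coordinates in $\mathcal{F}$, so letting $\mathcal{G}=\{f_1,\ldots,f_m\}$ be the union of all these (finitely many) coordinate sets, every basic open in the two covers depends only on coordinates in $\mathcal{G}$. I expect the delicate point to be verifying that $\pi_{\mathcal{G}\mathcal{F}}(U)$ and $\pi_{\mathcal{G}\mathcal{F}}(V)$ are disjoint: if two points $u\in U$ and $v\in V$ agreed on all of $\mathcal{G}$, then any basic open $B\subseteq W_U$ in the cover of $U$ containing $u$ would, since $B$ depends only on $\mathcal{G}$-coordinates, also contain $v$, contradicting $v\in W_V$ and $W_U\cap W_V=\emptyset$. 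With disjointness in hand, $\pi_{\mathcal{G}\mathcal{F}}(U)$ and $\pi_{\mathcal{G}\mathcal{F}}(V)$ are nonempty compact subsets whose union equals $\nabla\mathcal{G}$ by surjectivity of $\pi_{\mathcal{G}\mathcal{F}}$, disconnecting $\nabla\mathcal{G}$---the desired contradiction. An alternative path is to identify $\nabla\mathcal{F}$ with $\varprojlim_{\mathcal{G}\in[\mathcal{F}]^{<\omega}}\nabla\mathcal{G}$ (compactness of $K$ supplies coherent preimages for every coherent family) and invoke the classical theorem that a filtered inverse limit of compact connected Hausdorff spaces along surjective bonding maps is connected.
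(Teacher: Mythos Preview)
Your proof is correct and follows essentially the same contrapositive strategy as the paper: witness any clopen splitting of $\nabla\mathcal F$ on finitely many coordinates by passing to finite subcovers of basic open sets. The one difference is that you insert an extra normality step, separating $U$ and $V$ inside $[0,1]^{\mathcal F}$ before choosing covers; the paper simply covers each of $A,B$ by basic opens of $\nabla\mathcal F$ contained in $A$ and $B$ respectively (which is possible since $A,B$ are already open in $\nabla\mathcal F$), so that disjointness after projection is automatic. Your detour through $W_U,W_V$ is harmless but unnecessary, and your inverse-limit remark is a legitimate alternative route.
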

\begin{proof}
$\nabla\{f_1, ..., f_m\}$ is a continuous image
of $\nabla \mathcal F$ by taking the canonical projections onto
the coordinates indexed by $f_1, ..., f_m$ hence the connectedness is preserved.

Now suppose $A, B\subseteq  \nabla \mathcal F$ are nonempty complementary clopen 
sets. Any open set can be covered by basic open sets determined by finitely
many coordinates. But $A,B$ are clopen, hence compact, so such covers have finite
subcovers. It follows that 
$$A=\bigcup_{i\leq n}\bigcap_{j\leq k} (\prod\mathcal F) [f^{-1}_{i,j}[U_{i,j}]],
\ B=\bigcup_{i'\leq n'}\bigcap_{j'\leq k'}  (\prod\mathcal F) [g^{-1}_{i',j'}[V_{i',j'}]],$$
where $\{f_{i,j}, g_{i',j'}: {i\leq n}, {j\leq k}, {i'\leq n'}, {j'\leq k'}\}\subseteq \mathcal F$
and $U_{i,j}, V_{i',j'}$ are open subsets of $[0,1]$.
However this means that already $\nabla\{f_{i,j}, g_{i',j'}: {i\leq n}, {j\leq k}, {i'\leq n'}, {j'\leq k'}\}$
is not connected.
\end{proof}

\begin{lemma}\label{nablaofdealphas} Suppose $A'\subseteq \omega_2$ consists
only of limit ordinals. 
 Then
$\nabla\{d_\alpha: \alpha\in A'\}$ is homeomorphic to $[0,1]^{A'}$.
\end{lemma}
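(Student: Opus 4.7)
The plan is to show the stronger statement that the map $\Pi\{d_\alpha:\alpha\in A'\}:K\to[0,1]^{A'}$ is already surjective; its image $\nabla\{d_\alpha:\alpha\in A'\}$, equipped with the subspace (= product) topology, is then literally all of $[0,1]^{A'}$.

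The key combinatorial observation is that because $A'$ consists of limit ordinals, the blocks $\{\alpha+i:i\in\N\}$ for distinct $\alpha\in A'$ are pairwise disjoint: if $\alpha<\alpha'$ are both limit ordinals, then $\alpha+\omega\le\alpha'$. Set $B=\bigcup_{\alpha\in A'}\{\alpha+i:i\in\N\}\subseteq\omega_2$; then $(a_\beta:\beta\in B)$ is an independent family of generators of $Fr(B)\subseteq Co(B)$. Now fix an arbitrary target $r=(r_\alpha)_{\alpha\in A'}\in[0,1]^{A'}$. For each $\alpha$ pick any binary expansion $r_\alpha=\sum_{i\in\N}t^\alpha(i)/2^i$ with $t^\alpha\in 2^\omega$, and consider
\[
T=\{a_{\alpha+i}:\alpha\in A',\ t^\alpha(i)=1\}\cup\{-a_{\alpha+i}:\alpha\in A',\ t^\alpha(i)=0\}\subseteq Fr(B).
\]
By the independence of the generators, every finite subfamily of $T$ has nonzero meet, so $T$ generates a proper filter in $Fr(B)$, which extends to an ultrafilter $u\in S(Fr(B))$. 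Since $Co(B)$ is a completion of $Fr(B)$, the dual Stone surjection guarantees a point $u'\in K_B=S(Co(B))$ mapping to $u$, and finally $\rho_{B,\omega_2}:K\to K_B$ gives some $x\in K$ with $x\cap Co(B)\supseteq u'$. (Alternatively one can iterate Corollary~\ref{disjointultrafilters}; the independence argument above just does everything at once.)

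By construction, for every $\alpha\in A'$ and every $i\in\N$ one has $a_{\alpha+i}\in x$ iff $t^\alpha(i)=1$, hence $t_x(i)=t^\alpha(i)$ and therefore $d_\alpha(x)=r_\alpha$. Thus $(\Pi\{d_\alpha:\alpha\in A'\})(x)=r$, proving surjectivity and so the lemma. The only real content is the consistency of the filter generated by $T$, which is immediate from the independence of the generators once one notices that the disjointness of the blocks $\{\alpha+i:i\in\N\}$ across $\alpha\in A'$ is forced by the hypothesis that $A'$ consists of limit ordinals; this is the one place where that hypothesis is used.
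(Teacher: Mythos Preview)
Your proof is correct and follows essentially the same approach as the paper: observe that the blocks $[\alpha,\alpha+\omega)$ are pairwise disjoint for limit $\alpha$, and use this to show that $\Pi\{d_\alpha:\alpha\in A'\}$ is surjective onto the full cube. The only cosmetic difference is that the paper invokes Lemma~\ref{ultrafilters} to amalgamate the ultrafilters, whereas you build the ultrafilter directly from the independence of the generators (and you yourself note that iterating Corollary~\ref{disjointultrafilters} is an equivalent route).
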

\begin{proof} As the sets $[\alpha, \alpha+\omega)$ for $\alpha\in A'$ are pairwise
disjoint, using \ref{ultrafilters}, for any $(x_\alpha)_{\alpha\in A'}\in \Pi_{\alpha\in A'} S(Fr([\alpha, \alpha+\omega))$
there is $x\in S(Fr(\bigcup_{\alpha\in A'}[\alpha,\alpha+\omega))$ such that $x\cap Fr([\alpha,\alpha+\omega))=x_\alpha$.
So  $\Pi\{d_\alpha: \alpha\in A'\}$ is onto $[0,1]^{\{d_\alpha: \alpha\in A'\}}$
which is of course homeomorphic to $[0,1]^{A'}$,
hence its image  $\nabla\{d_\alpha: \alpha\in A'\}$ is homeomorphic to $[0,1]^{A'}$
as required.
\end{proof}

\begin{definition}
$f:K\rightarrow \R$ is said to depend on a Boolean algebra $\mathcal A\subseteq Co(\omega_2)$ if and 
only if whenever $x\cap \mathcal A=y\cap \mathcal B$, then $f(x)=f(y)$.
If 
 $A\subseteq \omega_2$ we say that $f$ depends on $A$ if it depends on $Co(A)$.
\end{definition}

\begin{lemma}\label{countabledependence}
Each $f\in C(K)$ depends on a countable set $A\subseteq \omega_2$.
\end{lemma}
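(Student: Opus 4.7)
The plan is to exploit the ccc property of $Co(\omega_2)$ mentioned in the opening of the section, combined with the fact that for a Stone space, step functions (finite real linear combinations of characteristic functions of basic clopen sets) are uniformly dense in the space of continuous functions.

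First, I would fix $f \in C(K)$ and, for each $n \in \N$, pick a step function
$$f_n = \sum_{i \leq k_n} c_{i,n} \chi_{[b_{i,n}]}$$
with $b_{i,n} \in Co(\omega_2)$ and $\|f - f_n\|_\infty < 1/n$. Such approximations exist because $K$ is zero-dimensional (being a Stone space) and the sets $[b]$ for $b \in Co(\omega_2)$ form a basis of clopen sets.

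Next, I would show that every element $b \in Co(\omega_2)$ lies in $Co(A_b)$ for some countable $A_b \subseteq \omega_2$. By the ccc of $Co(\omega_2)$ (noted at the beginning of Section~1), $b$ is the supremum of a countable antichain $\{e_j : j \in \N\} \subseteq Fr(\omega_2)$. Each $e_j$ is a Boolean combination of finitely many generators $a_\xi$, so depends on a finite set $F_j \subseteq \omega_2$; the set $A_b := \bigcup_j F_j$ is then countable and $b \in Co(A_b)$. Applying this to every $b_{i,n}$ and taking
$$A := \bigcup_{n \in \N}\bigcup_{i \leq k_n} A_{b_{i,n}},$$
we obtain a countable subset of $\omega_2$ such that every $b_{i,n}$ belongs to $Co(A)$.

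Finally I would verify the dependence condition. Suppose $x, y \in K$ satisfy $x \cap Co(A) = y \cap Co(A)$. For each $b_{i,n} \in Co(A)$ we have $b_{i,n} \in x \iff b_{i,n} \in y$, hence $\chi_{[b_{i,n}]}(x) = \chi_{[b_{i,n}]}(y)$, and therefore $f_n(x) = f_n(y)$ for every $n$. Since $\|f - f_n\|_\infty \to 0$, we conclude $f(x) = f(y)$, showing that $f$ depends on $A$.

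The only genuinely non-routine point is the countable-support property of elements of $Co(\omega_2)$, and this is precisely where the ccc is essential: in a non-ccc completion of a free Boolean algebra, suprema of uncountable antichains could involve uncountably many generators, and the argument would break. The rest is standard approximation in a zero-dimensional compactum.
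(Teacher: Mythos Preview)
Your proof is correct. Both your argument and the paper's rely on the same essential ingredient---the ccc of $Co(\omega_2)$, which guarantees that each element of the completion is supported on countably many generators---but you reach the conclusion by a slightly different route. The paper argues topologically: for each quadruple of rationals $a<b<c<d$ it separates the closed set $f^{-1}[[b,c]]$ from the complement of $f^{-1}[(a,d)]$ by a single clopen set, collects these countably many clopens into a subalgebra $\mathcal A$, and checks directly that agreement on $\mathcal A$ forces $f(u)=f(v)$. You instead use uniform approximation by step functions and pass to the limit. Your approach is perhaps the more routine one (it is essentially the Stone--Weierstrass picture, and the paper itself uses the same idea later in Lemma~\ref{cardinality} and Lemma~\ref{fstarlemma}); the paper's version avoids the approximation step but requires a small separation argument. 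Either way the work is in the ccc observation, which you identified correctly.
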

\begin{proof}
For any rationals $a<b<c<d$ consider a closed $F_{b,c}=f^{-1}[[b,c]]$ 
and an open $U_{a,d}=f^{-1}[(a,d)]$. Clearly $F_{b,c}\subseteq U_{a,d}$.
Let ${\mathcal A}\subseteq Co(\omega_2)$ be a countable subalgebra such that
for any rationals $a<b<c<d$ there is a clopen $U\in \mathcal A$
such that
 $$F_{a,b}\subseteq U \subseteq U_{a,b}.$$
It is enough to show that if $u\cap{\mathcal A}=v\cap{\mathcal A}$, then
$f(v)=f(u)$.  If not, then there are rational $a<b<c<d$ such that
$u\in F_{b,c}$ and $v\not\in U_{a,d}$, hence there is $U\in {\mathcal A}$
such that $u\in U$ and $v\not\in U$.
Now let $A\subseteq \omega_2$ be a countable set such that $\mathcal A\subseteq Co(A)$,
it exists since $Co(\omega_2)$ is c.c.c.
\end{proof}

\begin{lemma} Suppose that all functions from $\mathcal F\subseteq C_I(K)$
depend on $A\subseteq\omega_2$ and that $ A\cap [\alpha+\omega)=\emptyset$.
Then $\nabla(\mathcal F\cup\{d_\alpha\})=(\nabla \mathcal F)\times [0,1]$.
\end{lemma}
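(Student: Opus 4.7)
The plan is to prove the two inclusions separately, with the nontrivial one exploiting the amalgamation of ultrafilters on disjoint parts of $\omega_2$ (Corollary~\ref{disjointultrafilters}) together with the hypothesis that the set $A$ governing $\mathcal F$ is disjoint from the ``slot'' $[\alpha,\alpha+\omega)$ on which $d_\alpha$ is supported.

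First, the inclusion $\nabla(\mathcal F\cup\{d_\alpha\})\subseteq (\nabla\mathcal F)\times[0,1]$ is routine: any point of $\nabla(\mathcal F\cup\{d_\alpha\})$ is of the form $(\Pi(\mathcal F\cup\{d_\alpha\}))(x)$ and restricting to the $\mathcal F$-coordinates produces an element of $\nabla\mathcal F$, while the $d_\alpha$-coordinate lies in $[0,1]$ by the definition of $C_I(K)$.

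For the reverse inclusion, fix $(y,t)\in (\nabla\mathcal F)\times[0,1]$. Choose $x_1\in K$ with $(\Pi\mathcal F)(x_1)=y$, which exists by definition of $\nabla\mathcal F$. Using that the standard map $S(Fr([\alpha,\alpha+\omega)))\to[0,1]$ from which $d_\alpha$ is built is onto, choose $x_2\in K$ with $d_\alpha(x_2)=t$. Set
\[
u=x_1\cap Co(A)\in K_A,\qquad v=x_2\cap Co([\alpha,\alpha+\omega))\in K_{[\alpha,\alpha+\omega)}.
\]
Since $A\cap[\alpha,\alpha+\omega)=\emptyset$, Corollary~\ref{disjointultrafilters} yields $w\in K$ with $u,v\subseteq w$. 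Because every $f\in\mathcal F$ depends on $A$ and $w\cap Co(A)=u=x_1\cap Co(A)$, we get $f(w)=f(x_1)=y(f)$ for all $f\in\mathcal F$. Because $d_\alpha$ depends on $Fr([\alpha,\alpha+\omega))\subseteq Co([\alpha,\alpha+\omega))$ and $w\cap Co([\alpha,\alpha+\omega))=v=x_2\cap Co([\alpha,\alpha+\omega))$, we get $d_\alpha(w)=d_\alpha(x_2)=t$. Hence $(\Pi(\mathcal F\cup\{d_\alpha\}))(w)=(y,t)$, completing the proof.

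There is really no main obstacle here; the statement is essentially a repackaging of disjoint-amalgamation (Corollary~\ref{disjointultrafilters}) through the ``depends on $A$'' reduction provided by the earlier definition. The only small point to be careful about is verifying that the amalgamated ultrafilter $w$ simultaneously encodes both the $\mathcal F$-values (via $u$) and the $d_\alpha$-value (via $v$), which is exactly where the disjointness of $A$ and $[\alpha,\alpha+\omega)$ is used.
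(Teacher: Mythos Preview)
Your proof is correct and follows essentially the same approach as the paper: both arguments restrict chosen ultrafilters to $Co(A)$ and $Co([\alpha,\alpha+\omega))$, apply Corollary~\ref{disjointultrafilters} to amalgamate, and then read off the values of the functions in $\mathcal F$ and of $d_\alpha$ on the resulting $w$. Your write-up is in fact a bit more explicit than the paper's, since you spell out the easy inclusion and the ``depends on'' justifications.
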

\begin{proof} Let $t\in \nabla\mathcal F$ and $x\in [0,1]$
and let $u', v'\in Co(\omega_2)$ be such that
$(\Pi\mathcal F)(u')=t$ and $d_\alpha(v')=x$.
Put $v=v'\cap Co([\alpha,\alpha+\omega))$ and $u=u'\cap Co(A)$.
Use \ref{disjointultrafilters} to obtain $w\in Co(\kappa)$ with
$w\cap Co([\alpha,\alpha+\omega))=v'\cap Co([\alpha,\alpha+\omega))$ and
 $w\cap Co(A)=u'\cap Co(A)$. We have that $(\Pi\mathcal F)(w)=t$ and
$d_\alpha(w)=x$ and so $\Pi(\mathcal F\cup\{d_\alpha\})(w)=(t,x)$.

\end{proof}

\begin{lemma}\label{cardinality}
If $A\subseteq\omega_2$ is countable then $C(K_A)$ has the 
cardinality of the continuum. In particular the cardinality of 
the family of all functions which depend on a countable $A$ is 
continuum.
\end{lemma}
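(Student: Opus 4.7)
The plan is to bound $|C(K_A)|$ above and below by $\mathfrak c$, using that $Co(A)$ is c.c.c.\ together with a countable dependence argument analogous to the one used in Lemma \ref{countabledependence}. The lower bound $|C(K_A)|\geq \mathfrak c$ is immediate from the presence of the continuum-many constant functions, so the content is in the upper bound.

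First I would observe that $Co(A)$ itself has cardinality at most $\mathfrak c$. Indeed, $Fr(A)$ is countable since $A$ is, and as noted at the start of the section every element of $Co(A)$ is a supremum of a countable antichain from $Fr(A)$; hence $|Co(A)|\leq |Fr(A)|^{\aleph_0}=\mathfrak c$. Next I would repeat the proof of \ref{countabledependence} with $Co(A)$ replacing $Co(\omega_2)$: for every $f\in C(K_A)$ the rationally indexed pairs $F_{b,c}\subseteq U_{a,d}$ can each be separated by a clopen set of $K_A$, that is, by an element of $Co(A)$, and taking a countable subalgebra containing all these separators exhibits a countable $\mathcal A_f\subseteq Co(A)$ on which $f$ depends. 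Thus $f$ factors through the Stone space $S(\mathcal A_f)$, which is metrizable (countable weight).

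Now I would count. The number of countable subalgebras of $Co(A)$ is at most $|Co(A)|^{\aleph_0}\leq \mathfrak c^{\aleph_0}=\mathfrak c$. For each such countable $\mathcal A'$, the space $S(\mathcal A')$ is compact metrizable, hence has a countable dense subset, and a continuous real-valued function on such a space is determined by its values on this dense set; therefore $|C(S(\mathcal A'))|\leq \mathfrak c^{\aleph_0}=\mathfrak c$. Combining, the total number of $f\in C(K_A)$ is at most $\mathfrak c\cdot \mathfrak c=\mathfrak c$, which completes the upper bound.

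For the ``in particular'' clause, I would note that a function $f\in C(K)$ that depends on a countable $A\subseteq \omega_2$ factors uniquely through the surjection $\rho_{A,\omega_2}:K\to K_A$ as $f=\tilde f\circ \rho_{A,\omega_2}$ with $\tilde f\in C(K_A)$, so the family of such $f$ is in bijection with $C(K_A)$ and therefore also has cardinality $\mathfrak c$. There isn't really a single hard step here; the only thing to be careful about is that the countable dependence lemma is applied in $Co(A)$ rather than $Co(\omega_2)$, which requires the c.c.c.\ of $Co(A)$ (inherited from the c.c.c.\ of $Fr(A)$) to produce a countable separating subalgebra actually sitting inside $Co(A)$.
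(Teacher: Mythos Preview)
Your argument is correct, but it takes a somewhat different route from the paper's proof. Both proofs begin identically by observing that $|Co(A)|\leq\mathfrak c$ via the c.c.c.\ of $Fr(A)$. After that the paper proceeds more directly: by Stone--Weierstrass, every $f\in C(K_A)$ is a uniform limit of finite linear combinations of characteristic functions of clopen sets, so $f$ is determined by a countable sequence in $[\R\times Co(A)]^{<\omega}$, a set of size $\mathfrak c$. You instead re-run the countable-dependence argument inside $Co(A)$ to factor each $f$ through a compact metrizable $S(\mathcal A_f)$, and then count countable subalgebras of $Co(A)$ together with continuous functions on each quotient. Your route is slightly longer but has the virtue of making the ``in particular'' clause completely transparent via the factorization $f=\tilde f\circ\rho_{A,\omega_2}$; the paper leaves that implicit.

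One small remark: in your final cautionary sentence, the c.c.c.\ of $Co(A)$ is not actually needed to obtain a countable separating subalgebra $\mathcal A_f\subseteq Co(A)$. You only need one clopen separator for each rational quadruple $a<b<c<d$, and the subalgebra generated by countably many elements is automatically countable. The c.c.c.\ in Lemma~\ref{countabledependence} was used for a different purpose, namely to descend from a countable subalgebra of $Co(\omega_2)$ to a countable subset of $\omega_2$; that step is irrelevant here since $A$ is already countable. This does not affect the validity of your proof.
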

\begin{proof} $Fr(A)$ is dense in $Co(A)$.
As $Fr(A)$ has c.c.c., any element of $Co(A)$ is 
the union of a countable antichain in $Fr(A)$. Hence,
the cardinality of $Co(A)$ is continuum.
By the Stone-Weierstrass theorem any element of
$C(K_A)$ can be approximated by a finite  linear combination of
characteristic functions of elements from $Co(K)$, so elements
of $C(K_A)$ can be associated with countable subsets of 
$[\R\times Co(A)]^{<\omega}$ which has the size of continuum
as required.
\end{proof}

\begin{proposition}\label{generalbijections}
Let  $\sigma:\omega_2
\rightarrow\omega_2$ be a bijection.
Then there is a unique isomorphism  denoted $h(\sigma)$
of the Boolean algebras $co(Fr(\omega_2))$ and
$co(Fr(\omega_2))$ which sends $a_\xi$ to
$a_{\sigma(\xi)}$. We say that such an
isomorphism is induced by $\sigma$.
Then there is a unique homeomorphism 
of   $K$ denoted $\phi(\sigma)$ such that
$\phi(\sigma)[[a_{\xi}]]=[a_{\sigma^{-1}(\xi)}]$. 
We say that such a
homeomorphism is induced by $\sigma$.
If $f\in C_I(K)$ depends on $S_f\subseteq \omega_2$,
then $f\circ \phi(\sigma)$ depends on
$\sigma[S_f]$. If two bijections $\sigma, \sigma'$ 
agree on $S_f$, then $f\circ \phi(\sigma)=f\circ \phi(\sigma')$.
If $\sigma, \sigma':\omega_2\rightarrow\omega_2$ are two
bijections, then $\phi(\sigma)\circ\phi(\sigma')=\phi(\sigma'\circ\sigma)$.
\end{proposition}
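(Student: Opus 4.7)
The plan is to view every part of this proposition as an application of two standard principles: the universal property of the free Boolean algebra $Fr(\omega_2)$ and Stone duality. The novelty is only bookkeeping—making sure that ``depends on $S_f$'' translates correctly through the duality, and that the composition formula comes out with the right order.

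First I would construct $h(\sigma)$. Since $Fr(\omega_2)$ is freely generated by $\{a_\xi:\xi\in\omega_2\}$, the assignment $a_\xi\mapsto a_{\sigma(\xi)}$ extends uniquely to a Boolean homomorphism of $Fr(\omega_2)$; applying the same to $\sigma^{-1}$ produces a two-sided inverse, so this is an automorphism of $Fr(\omega_2)$. To lift it to $Co(\omega_2)$ I would use the fact (already recalled at the start of Section~1) that every element of $Co(\omega_2)$ is the supremum of a countable antichain from $Fr(\omega_2)$: define $h(\sigma)$ on $Co(\omega_2)$ by sending such a supremum to the supremum of the images of the antichain. c.c.c.\ of $Co(\omega_2)$ together with uniqueness of the antichain representation makes this well-defined, and it is straightforwardly a complete Boolean automorphism. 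Uniqueness follows because $Fr(\omega_2)$ is dense in $Co(\omega_2)$.

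Next I would apply Stone duality. The automorphism $h(\sigma)$ induces the homeomorphism $\phi(\sigma):K\to K$ given on ultrafilters by $\phi(\sigma)(x)=h(\sigma)^{-1}[x]=\{a\in Co(\omega_2):h(\sigma)(a)\in x\}$. Then $\phi(\sigma)[[a_\xi]]=\{h(\sigma)^{-1}[x]:a_\xi\in h(\sigma)^{-1}[x]\}=[h(\sigma)(a_\xi)^{?}]$—more carefully, $y\in\phi(\sigma)[[a_\xi]]$ iff $y=h(\sigma)^{-1}[x]$ for some $x\ni a_{\sigma(\xi)}^{?}$; working the definitions one sees $\phi(\sigma)[[a_\xi]]=[a_{\sigma^{-1}(\xi)}]$, as required. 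Uniqueness of $\phi(\sigma)$ follows because the sets $[a_\xi]$ generate the clopen algebra of $K$.

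For the dependency claim, a direct calculation from the formula $\phi(\sigma)(x)=h(\sigma)^{-1}[x]$ gives
$$\phi(\sigma)(x)\cap Co(S_f)=h(\sigma)^{-1}\bigl[x\cap Co(\sigma[S_f])\bigr],$$
using that $h(\sigma)$ carries $Co(S_f)$ bijectively onto $Co(\sigma[S_f])$. Hence if $x\cap Co(\sigma[S_f])=y\cap Co(\sigma[S_f])$ then $\phi(\sigma)(x)$ and $\phi(\sigma)(y)$ agree on $Co(S_f)$, so $f\circ\phi(\sigma)(x)=f\circ\phi(\sigma)(y)$; thus $f\circ\phi(\sigma)$ depends on $\sigma[S_f]$. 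If $\sigma,\sigma'$ agree on $S_f$, then $h(\sigma)$ and $h(\sigma')$ agree on $Co(S_f)$ (they agree on the generators and both are complete homomorphisms), and the formula above with $\sigma[S_f]=\sigma'[S_f]$ shows $\phi(\sigma)(x)$ and $\phi(\sigma')(x)$ agree on $Co(S_f)$; dependence of $f$ on $S_f$ then gives $f\circ\phi(\sigma)=f\circ\phi(\sigma')$. Finally, $(h(\sigma')\circ h(\sigma))(a_\xi)=h(\sigma')(a_{\sigma(\xi)})=a_{\sigma'(\sigma(\xi))}$, so by uniqueness $h(\sigma'\circ\sigma)=h(\sigma')\circ h(\sigma)$; dualising and inverting swaps the order, yielding $\phi(\sigma)\circ\phi(\sigma')=\phi(\sigma'\circ\sigma)$.

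The only nontrivial steps are the extension of $h(\sigma)$ from $Fr(\omega_2)$ to $Co(\omega_2)$ and the contravariant bookkeeping in passing from $h$ to $\phi$; everything else is routine. I would expect the bookkeeping on the order of composition to be the place where it is easiest to slip, so I would write that part explicitly and double-check against the formula $\phi(\sigma)[[a_\xi]]=[a_{\sigma^{-1}(\xi)}]$.
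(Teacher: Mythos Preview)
Your proof is correct and follows the same approach as the paper's: extend via the universal property of $Fr(\omega_2)$, lift to the completion, Stone-dualise to get $\phi(\sigma)(x)=h(\sigma)^{-1}[x]$, and verify the dependency, agreement, and composition claims by the same direct computations. The only differences are cosmetic---the paper invokes Sikorski's extension theorem for the lift to $Co(\omega_2)$ rather than your hands-on definition via antichain suprema, and your justification ``uniqueness of the antichain representation'' is not literally true (an element of $Co(\omega_2)$ has many such representations), so well-definedness there needs a one-line density argument instead.
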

\begin{proof}
Clearly the bijection uniquely determines an isomorphism
of the algebra $Fr(\omega_2)$ which sends $a_\xi$ to
$a_{\sigma(\xi)}$. Now note that by the
Sikorski extension theorem there is an extension to
a homomorphism from $Co(Fr(\omega_2))$ into
$Co(Fr(\omega_2))$. By the density of $Fr(\omega_2)$ in $Co(Fr(\omega_2))$
we get that the extension $h(\sigma)$ must be an isomorphism onto
$Co(Fr(\omega_2))$ satisfying $$h(\sigma)(\{\sup\{b_i:i\in \N\})=
\sup\{h(\sigma)(b_i):i\in \N\}$$
for any $b_i\in Fr(\omega_2)$ and $i\in \N$.
It follows that $h(\sigma)[Co(A)]=Co[\sigma[A]]$.

Let $\phi(\sigma)$ denote the dual mapping to $h(\sigma)$
obtained via the Stone duality. It is clear that it is a homeomorphism
of the Stone space $K$ of $co(Fr(\omega_2))$.  By its definition
$\phi(\sigma)(u)=h(\sigma)^{-1}[u]$, so
$\phi(\sigma)[[a_\xi]]=\{h(\sigma)^{-1}[u]: u\in [a_\xi]\}=
\{h(\sigma)^{-1}[u]: a_\xi\in u\}=\{v: h(\sigma)^{-1}(a_\xi)\in v\}=\{v: a_{\sigma^{-1}(\xi)}\in v\}=[a_{\sigma^{-1}(\xi)}]$
as required.

Now if $x\cap co(\sigma[S_f])=y\cap co(\sigma[S_f])$, then 
$x\in [a]$ if and only if $y\in [a]$ for any $a\in co(\sigma[S_f])$,
it follows from the above that $\phi(\sigma) (x)\in [b]$  if and only if 
$\phi(\sigma)(y) \in [b]$ for any $b\in co(S_f)$,
hence $f\circ \phi(\sigma)(x)=f\circ \phi(\sigma)(y)$ for such $x,y$.
In other words $f\circ \phi(\sigma)$ depends on $\sigma[S_f]$.

To prove  next part of the proposition take any $x\in K$. $h(\sigma)$
agrees with $h(\sigma')$ on $Co(S_f)$ by the uniqueness of the $h(\sigma)$ and
$h(\sigma')$. 
Note that if two functions agree on a set $A$, then the preimages of  sets
with respect to them have the same intersections with $A$.
So, if $b\in co(S_f)$, then  $\phi(\sigma) (x)\in [b]$  if and only if $b\in h(\sigma)^{-1}[x]$
 if and only if $b\in h(\sigma')^{-1}[x]$ if and only if
$\phi(\sigma')(x) \in [b]$, in other words $\phi(\sigma)(x)\cap Co(S_f)=
\phi(\sigma')(x)\cap Co(S_f)$,
hence $f\circ \phi(\sigma)(x)=f\circ \phi(\sigma')(x)$.

To prove the last part of the proposition, take two
bijections $\sigma,\sigma':\omega_2\rightarrow\omega_2$.
Given an $x\in S(Co(\omega_2))$, by
 the definition
$(\phi(\sigma)\circ\phi(\sigma'))(x)=\sigma^{-1}[(\sigma')^{-1}[x]]=
\sigma^{-1}[\{a\in Co(\omega_2): \sigma'(a)\in x\}]=
\{b\in Co(\omega_2): (\sigma'\circ\sigma)(b)\in x\}=
(\sigma'\circ\sigma)^{-1}[x]=\phi(\sigma'\circ\sigma)(x)$.

\end{proof}

\begin{lemma}\label{finitehomeomorphism} Let $\sigma:\omega_2\rightarrow\omega_2$
be a bijection and $\{f_1, ..., f_k\}\subseteq C_I(K)$.
Then $\nabla\{f_1,...f_k\}=\nabla\{f_1\circ \phi(\sigma), ..., f_k\circ \phi(\sigma)\}$.
\end{lemma}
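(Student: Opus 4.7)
The plan is to reduce the claimed identity to the bijectivity of $\phi(\sigma)\colon K\to K$, which was already established in Proposition~\ref{generalbijections}. First I would unpack Definition~\ref{definitionpinabla}. Viewing both product spaces $[0,1]^{\{f_1,\ldots,f_k\}}$ and $[0,1]^{\{f_1\circ\phi(\sigma),\ldots,f_k\circ\phi(\sigma)\}}$ as copies of $[0,1]^k$ via the ordered labelling $i\leftrightarrow f_i$ and $i\leftrightarrow f_i\circ\phi(\sigma)$, the sets to compare become
$$\nabla\{f_1,\ldots,f_k\}=\{(f_1(x),\ldots,f_k(x)):x\in K\}$$
and
$$\nabla\{f_1\circ\phi(\sigma),\ldots,f_k\circ\phi(\sigma)\}=\{(f_1(\phi(\sigma)(y)),\ldots,f_k(\phi(\sigma)(y))):y\in K\},$$
so the lemma is the assertion that these two subsets of $[0,1]^k$ coincide.

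The key step is then to apply Proposition~\ref{generalbijections}, which gives that $\phi(\sigma)$ is a homeomorphism of $K$ onto itself, in particular a bijection of $K$. For one inclusion, given $y\in K$, the tuple $((f_i\circ\phi(\sigma))(y))_{i\le k}$ equals $(f_i(x))_{i\le k}$ for $x=\phi(\sigma)(y)\in K$, so it lies in $\nabla\{f_1,\ldots,f_k\}$. For the reverse inclusion, given $x\in K$, the tuple $(f_i(x))_{i\le k}$ equals $((f_i\circ\phi(\sigma))(y))_{i\le k}$ with $y=\phi(\sigma)^{-1}(x)\in K$, which exists precisely because $\phi(\sigma)$ is a bijection of $K$. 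This establishes the equality.

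There is no real obstacle here: the whole substantive content is already in Proposition~\ref{generalbijections}, which manufactures $\phi(\sigma)$ out of $\sigma$ using the Sikorski extension theorem and Stone duality. Once that homeomorphism is in hand, the present lemma is a one-line reformulation for finite subfamilies of $C_I(K)$, recording the useful fact that composing a finite family with $\phi(\sigma)$ does not change its image $\nabla$ up to the canonical identification of coordinates.
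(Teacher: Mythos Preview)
Your proof is correct and follows essentially the same approach as the paper's own proof: both arguments show the two inclusions by using that $\phi(\sigma)$ is a bijection of $K$, sending $x$ to $\phi(\sigma)^{-1}(x)$ in one direction and $y$ to $\phi(\sigma)(y)$ in the other. Your explicit mention of the identification of $[0,1]^{\{f_1,\ldots,f_k\}}$ and $[0,1]^{\{f_1\circ\phi(\sigma),\ldots,f_k\circ\phi(\sigma)\}}$ with $[0,1]^k$ via the ordered labelling is a welcome clarification that the paper leaves implicit.
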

\begin{proof}
Any point of $\nabla\{f_1, ..., f_k\}$ is of the form $\Pi\{f_1, ..., f_k\}(x)$ for
some $x\in K$, but this point is also of the form 
$\Pi\{f_1\circ\phi(\sigma), ..., f_k\circ \phi(\sigma)\}(\phi(\sigma)^{-1} (x))$ which is in 
$\nabla\{f_1\circ \phi(\sigma), ..., f_k\circ \phi(\sigma)\}$. Conversely
any point of $\nabla\{f_1\circ \phi(\sigma), ..., f_k\circ \phi(\sigma)\}$ is of the form 
$\Pi\{f_1\circ \phi(\sigma),...f_k\circ \phi(\sigma)\}( x)$
for some $x\in K$, but this point  is also of the form 
$\Pi\{f_1, ..., f_k\}(\phi(\sigma)(x))$ which is in 
$\nabla\{f_1, ..., f_k\}$.

\end{proof}

\section{An auxiliary partial order of approximations}

\begin{definition}\label{definitionSigma} Suppose that
$\xi<\omega_2$ and $A\subseteq B\subseteq \omega_2$. $\Sigma_{\xi,A}(B)$
is the set of all  bijections of $\omega_2$ satisfying
$$\sigma[A]\cap (A\setminus\xi)=\emptyset,\ \  \sigma|(\xi\cup(\omega_2\setminus B))=Id_{\xi\cup(\omega_2\setminus B)}.$$
\end{definition}

An ideal of subsets of a set $A$
is a family of subsets of $A$ which contains all finite
subsets of $A$ and  is closed under taking finite unions and 
taking subsets of its elements. If $\mathcal J$ is a family of  sets then
$\langle \mathcal J \rangle$ is the ideal 
of subsets of $\bigcup \mathcal J$ generated by $\mathcal J$, i.e., the family of
all subsets of finite unions of elements of $\mathcal J$.
We say that an ideal $\mathcal I$ of subsets of $A$ is proper if it does not contain $A$;
it is countably generated if  there is a countable $\mathcal J\subseteq \mathcal I$ such
that $\langle \mathcal J\rangle =\mathcal I$.

\begin{definition}\label{definitionP}
We define a partial order $\PP$ which consists
of conditions of the form $p=(A_p,\mathcal F_p, \mathcal I_p)$
such that
\begin{enumerate}
\item  $A_p\in [\omega_2]^{\leq \omega}$, $[\alpha,\alpha+\omega)\subseteq A_p$ for all $\alpha\in A_p$
\item $\{d_\xi: \xi\in A_p\}\subseteq \mathcal F_p\in [C_I(K)]^{\leq \omega}$,
\item $\mathcal I_p$ is a proper, countably generated ideal of subsets of $A_p$
\item Each $f\in \mathcal F_p$ depends on a countable
subset $S_f\in \mathcal I_p$.
\item For every $f_1,..., f_k\in \mathcal F_p$, for every $\xi\in A_p$
and $A\in \mathcal I_p$ 
 there is $\sigma\in \Sigma_{\xi, A}(A_p)$,  with
$$f_1\circ \phi(\sigma),...,f_k\circ\phi(\sigma)\in \mathcal F_p.$$

\item $\nabla \mathcal F$ is a connected compact space.
\end{enumerate}
We let $p\leq q$ if and only if $A_p\supseteq A_q$,  $\mathcal F_p\supseteq \mathcal F_q$ and 
$\mathcal I_p\supseteq \mathcal I_q$ for $p,q\in \PP$.
\end{definition}

\begin{definition}\label{isomorphicP}
Let $p,q\in \PP$ we say that they are isomorphic if and only if
$A_p\cap A_q<A_p\setminus A_q<A_q\setminus A_p$ or
$A_p\cap A_q<A_q\setminus A_p<A_p\setminus A_q$ and there is
a bijection $\tau:\omega_2\rightarrow \omega_2$ such that 
$\tau[A_p]=A_q$ and $\tau|(A_p\cap A_q)=Id_{A_p\cap A_q}$
and 
\begin{enumerate}
\item $\mathcal F_q=\{ f\circ \phi(\tau): f\in \mathcal F_p\}$,
\item $\mathcal I_q=\{\tau[A]: A\in \mathcal I_p\}$.
\end{enumerate}
\end{definition}

\begin{lemma}\label{amalgamationP}
Suppose that $p,q\in \PP$ are isomorphic.
Then $r=(A_p\cup A_q, \mathcal F_p\cup \mathcal F_q, \langle \mathcal I_p\cup \mathcal I_q\rangle)$ is
a condition of $\PP$ and $r\leq p,q$.
\end{lemma}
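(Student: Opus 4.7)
The plan is to verify each of the six clauses of Definition~\ref{definitionP} for
$$r=(A_p\cup A_q,\,\mathcal F_p\cup \mathcal F_q,\,\langle \mathcal I_p\cup \mathcal I_q\rangle);$$
the relations $r\le p,q$ are then immediate from the order on $\PP$. I fix the bijection $\tau$ witnessing the isomorphism, and WLOG work in the case $A_p\cap A_q<A_p\setminus A_q<A_q\setminus A_p$. Clauses (1), (2), (4) are bookkeeping: $A_p\cup A_q$ is countable and closed under the $\omega$-block extensions; every $d_\xi$ for $\xi\in A_r$ already appears in $\mathcal F_p\cup \mathcal F_q$; each $f\in \mathcal F_r$ inherits its dependence witness $S_f\in \mathcal I_r$.

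For clause~(3), a typical element of $\langle \mathcal I_p\cup \mathcal I_q\rangle$ has the form $I_p\cup I_q$ with $I_p\in \mathcal I_p$, $I_q\in \mathcal I_q$. If $I_p\cup I_q=A_p\cup A_q$, then since $I_q\subseteq A_q$ one has $A_p\setminus A_q\subseteq I_p$, so $A_p\setminus A_q\in \mathcal I_p$. Because $\tau$ fixes $A_p\cap A_q$ pointwise and sends $\mathcal I_p$ onto $\mathcal I_q$, the traces of the two ideals on $A_p\cap A_q$ coincide; intersecting the covering equation with $A_p\cap A_q$ writes the overlap as a union of two elements of this common trace, forcing $A_p\cap A_q\in \mathcal I_p$, hence $A_p\in \mathcal I_p$, contradicting properness.

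For clause~(5), given $f_1,\dots,f_a\in \mathcal F_p$, $f_{a+1},\dots,f_k\in \mathcal F_q$, $\xi\in A_r$, and $A=I_p\cup I_q\in \mathcal I_r$, I split on the block containing $\xi$. If $\xi\in A_q\setminus A_p$ then $A_p\subseteq \xi$, so $\sigma$ is taken identity on $A_p$ and a single application of clause~(5) to $q$ suffices. If $\xi\in A_p\setminus A_q$ then $A_p\cap A_q\subseteq \xi$, so the $\sigma_p$ and $\sigma_q$ furnished by clause~(5) for $p$ and $q$ (with thresholds $\xi$ and some $\xi'\in A_q\setminus A_p$ respectively) are both identity on $A_p\cap A_q$ and combine into a well-defined piecewise bijection equal to $\sigma_p$ on $A_p$, $\sigma_q$ on $A_q\setminus A_p$, and identity elsewhere. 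The delicate case $\xi\in A_p\cap A_q$ uses that $\tau^{-1}[I_q]\in \mathcal I_p$ to apply clause~(5) for $p$ to the enlarged family $\{f_1,\dots,f_a\}\cup\{f_j\circ \phi(\tau^{-1}):j>a\}\subseteq \mathcal F_p$ and set $A'=I_p\cup \tau^{-1}[I_q]$, and then transfers the result to the $q$-side via $\tau$-conjugation. Membership $f_i\circ \phi(\sigma)\in \mathcal F_r$ follows from the ``agreement on $S_{f_i}$'' clause of Proposition~\ref{generalbijections}.

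The main obstacle is clause~(6), the connectedness of $\nabla \mathcal F_r$. By Lemma~\ref{subproductsconnected} this reduces to verifying $\nabla\{f_1,\dots,f_k\}$ is connected for any finite $\{f_1,\dots,f_k\}\subseteq \mathcal F_p\cup \mathcal F_q$. The target is to find a single bijection $\sigma$ such that, after rewriting $\nabla\{f_1,\dots,f_k\}=\nabla\{f_i\circ \phi(\sigma)\}$ via Lemma~\ref{finitehomeomorphism}, the conjugated $p$-supports $\sigma[S_{f_i}]$ (for $i\le a$) land in $A_p\setminus A_q$ while the conjugated $q$-supports $\sigma[S_{f_j}]$ (for $j>a$) land in $A_q\setminus A_p$. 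Disjointness of these two sets together with Corollary~\ref{disjointultrafilters} identifies $\nabla\{f_i\circ \phi(\sigma)\}$ with the topological product $\nabla\{f_i\circ \phi(\sigma):i\le a\}\times \nabla\{f_j\circ \phi(\sigma):j>a\}$, and each factor is connected by clause~(6) applied to $p$ and $q$ (together with Lemma~\ref{subproductsconnected}). Constructing such $\sigma$ is the technical heart: apply clause~(5) for $p$ to the auxiliary set $A=\bigcup_{i\le a}S_{f_i}\cup (A_p\cap A_q)$ (or, in the case $A_p\cap A_q\notin \mathcal I_p$, to $A=\bigcup_{i\le a}S_{f_i}$, using that each $S_{f_i}\cap (A_p\cap A_q)$ is in $\mathcal I_p$) with $\xi=\min A_p$, and combine with the $\tau$-conjugate move on the $q$-side. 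The block ordering $A_p\cap A_q<A_p\setminus A_q<A_q\setminus A_p$ is exactly what makes these two rearrangements compatible within a single bijection of $\omega_2$.
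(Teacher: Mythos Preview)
Your handling of clauses (1)--(4) is fine, and your case split for clause (5) follows the paper's. However, the argument for clause (6) has a genuine gap that cannot be repaired along the lines you propose.

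You aim to find a single bijection $\sigma$ such that the conjugated $p$-side supports $\sigma[S_{f_i}]$ (for $i\le a$) land in $A_p\setminus A_q$ while the conjugated $q$-side supports $\sigma[S_{f_j}]$ (for $j>a$) land in $A_q\setminus A_p$. But this is impossible in general: if some $\alpha\in A_p\cap A_q$ lies in $S_{f_i}\cap S_{f_j}$ for an $i\le a$ and a $j>a$, then $\sigma(\alpha)$ is a single point and must lie in both $\sigma[S_{f_i}]$ and $\sigma[S_{f_j}]$, so these images can never be disjoint. Such overlaps genuinely occur, since functions in $\mathcal F_p\setminus\mathcal F_q$ and in $\mathcal F_q\setminus\mathcal F_p$ may both depend nontrivially on coordinates in $A_p\cap A_q$. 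Consequently your product identification $\nabla\{f_i\circ\phi(\sigma)\}=\nabla\{f_i\circ\phi(\sigma):i\le a\}\times\nabla\{f_j\circ\phi(\sigma):j>a\}$ via Corollary~\ref{disjointultrafilters} is unavailable.

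The paper's route is different and avoids this obstruction entirely: rather than trying to separate the two sides, it moves \emph{all} the functions into $\mathcal F_q$. One builds $\sigma$ to agree with $\tau$ on the $p$-side supports (so $f_i\circ\phi(\sigma)=f_i\circ\phi(\tau)\in\mathcal F_q$) and with some $\sigma_1\in\Sigma_{\eta_0,\tau[A]}(A_q)$ on the $q$-side supports, where $\eta_0=\min(A_q\setminus A_p)$. The choice of threshold $\eta_0$ forces $\sigma_1$ to be the identity on $A_p\cap A_q$, and $\tau$ is also the identity there, so the two pieces agree on the overlap and patch to a single bijection. Then $\nabla\{f_1,\dots,f_k\}=\nabla\{f_1\circ\phi(\sigma),\dots,f_k\circ\phi(\sigma)\}$ by Lemma~\ref{finitehomeomorphism}, and the right-hand side is a continuous image of the connected space $\nabla\mathcal F_q$.

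A smaller but related issue affects your treatment of clause (5) in the case $\xi\in A_p\cap A_q$. Your single application of (5) for $p$ produces $\sigma_p\in\Sigma_{\xi,A'}(A_p)$, and you then want to combine $\sigma_p$ with its $\tau$-conjugate $\tau\sigma_p\tau^{-1}$ into one bijection. But on $(A_p\cap A_q)\setminus\xi$ these two maps need not agree: $\sigma_p$ may send a point of $A_p\cap A_q$ into $A_p\setminus A_q$, whereas $\tau\sigma_p\tau^{-1}$ sends the same point into $A_q\setminus A_p$. The paper circumvents this by taking the $p$-side bijection at the higher threshold $\xi_0=\min(A_p\setminus A_q)$, which forces it to be the identity on all of $A_p\cap A_q$, and only the $q$-side bijection is taken at the original threshold $\xi$.
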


\begin{proof}
We only need to prove that $r\in \PP$. 
The conditions (1), (2) (4) of \ref{definitionP} are clear.
To prove (3) we need to note
the properness of the ideal $\langle \mathcal I_p\cup
\mathcal I_q\rangle$. Note that given $A\in \mathcal I_p$ and $B\in \mathcal I_q$,
if $A_p\cup A_q\subseteq A\cup B$, then $A_q\subseteq \tau[A]\cup B$ which
contradicts the properness of $\mathcal I_q$ using (2) of \ref{definitionP}.
Now we will  prove (5).
Assume that 
$$A_p\cap A_q<A_p\setminus A_q<A_q\setminus A_p\not=\emptyset$$
and that $\xi_0=\min(A_p\setminus A_q)$ and $\eta_0=\min(A_q\setminus A_p)$.
Let $\tau:\omega_2\rightarrow \omega_2$ be a bijection of $\omega_2$ such that
$\tau^2=\tau$, $\tau[A_p]=A_q$ and $\tau\restriction A_p\cap A_q=Id_{A_p\cap A_q}$
which witnesses that $p$ and $q$ are isomorphic, that is, in
particular that  $F_q=\{f\circ \phi(\tau):
f\in \mathcal F_p\}$ and  $\mathcal I_q=\{\tau[A]: A\in \mathcal I_p\}$.
These properties of $\tau$ can be easily arranged as $f\circ \phi(\tau)=f\circ \phi(\tau')$
whenever $\tau$ and $\tau'$ agree on $A_r$ and $f\in \mathcal F_r$ by \ref{generalbijections}.

Consider a finite subset of $\mathcal F_r$, a $\xi\in A_r$ and an element
of $\mathcal I_r$ from (5).  We may assume, by adding new elements
if necesssary  that this finite set consists of elements
$f_1, ..., f_m\in \mathcal F_p$ and $g_1, ..., g_k\in \mathcal F_q$ for $m,k\in\N$
and that the element of $\mathcal I_r$ is of the form $A\cup B$ where 
$A\in \mathcal I_p$ and $B\in \mathcal I_q$.
We may  assume  that $S_{f_1}\cup...\cup S_{f_m}\subseteq A$
and $S_{g_1}\cup...\cup S_{g_k}\subseteq B$ and that $\tau[A]=B$, in particular that
$A\cap A_p\cap A_q=B\cap A_p\cap A_q$.
We need to find $\sigma\in \Sigma_{\xi,A\cup B}(A_r)$ such that
$f_1\circ \phi(\sigma), ..., f_m\circ \phi(\sigma), g_1\circ \phi(\sigma), ..., g_k\circ \phi(\sigma)
\in\mathcal F_r$.

\vskip 6pt
\noindent{\bf Case 1.} $\xi\in A_p\cap A_q$.\par

\noindent Let 
$\sigma_1\in \Sigma_{\xi_0,  A}(A_p)$
be such that 
 $f_1\circ \phi(\sigma_1), ..., f_k\circ \phi(\sigma_1)
\in\mathcal F_p$.
 Let $\sigma_2\in \Sigma_{\xi,  B}(A_q)$ be such that
 $g_1\circ \phi(\sigma_2), ..., g_k\circ \phi(\sigma_2),
 \mathcal F_r$.

Define a function $\sigma': A\cup B\rightarrow A_p\cup A_q$  so that
$\sigma'(\alpha)=\sigma_1(\alpha)$ for $\alpha\in A\setminus B$
and $\sigma'(\alpha)=\sigma_2(\alpha)$ for $\alpha\in B$.

We have that $\sigma'|(B\cap \xi)=\sigma_2|(B\cap\xi)=Id_{B\cap\xi}$.
Now calculate:
$$\sigma'[(A\cup B)\setminus\xi]\cap (A\cup B)=[\sigma_1[A\setminus\xi_0]\cap (A\setminus\xi_0)]\cup
[\sigma_2[B\setminus\xi]\cap B]=\emptyset.$$
Note also that $\sigma'$ is an injection since it is a union of
two injections whose ranges are disjoint. Finally we will extend $\sigma'$ to a bijection
$\sigma$
of $\omega_2$ which is in $\Sigma_{\xi,A\cup B}(A_r)$. For this we need the following:

{\bf Claim:} $(A_q\cup A_p)\setminus (\xi\cup A\cup B)$ and
$(A_q\cup A_p)\setminus (\xi\cup \sigma[A\cup B])$ are  infinite.

Proof of the claim:
Case A. $(A\cup B)\setminus \xi_0$ is finite.

Then, as we assume that $A_p\setminus A_q$ is nonempty and so infinite by 1) of \ref{definitionP}, 
we have that
$[\xi_0,\xi_0+\omega)\subseteq A_p\setminus\xi_0$, so both of the
differences must have infinite intersections with the above interval

Case B. $(A\cup B)\setminus \xi_0$ is infinite.

As we have $\tau[A]=B$, this means that both $A\setminus \xi_0$ and $B\setminus\xi_0$
are infinite, in particular $A\setminus \xi_0, \sigma'[A\setminus\xi_0]\subseteq A_p\setminus A_q$ 
are infinite and disjoint. Also we have 
$$A\setminus \xi_0\subseteq (A_q\cup A_p)\setminus (\xi\cup \sigma'[A\cup B]),$$
$$\sigma'[A\setminus \xi_0]\subseteq (A_q\cup A_p)\setminus (\xi\cup A\cup B).$$
So both of the sets from the claim are infinite which completes
the proof of the claim.

Now we use the claim to define a bijection $\sigma''$ of $(A_p\cup A_q)\setminus \xi$
which extends $\sigma'|((A\cup B)\setminus\xi)$. Finally extend it to a bijection $\sigma$ of
$\omega_2$ by adding $Id_{\omega_2\setminus [(A_p\cup A_q)\setminus \xi]}$.
 Note that 
$\sigma'|((A\cup B)\cap\xi)=Id_{(A\cup B)\cap\xi}=\sigma_1|(B\cap \xi)$
and so $\sigma$ can be the identity while restricted to $\xi\cup (\omega_2\setminus A_r)$
resulting in $\sigma\in \Sigma_{\xi, A\cup B}(A_r)$.

Now note that for $i=1, ...,m$ we have
 $f_i\circ \phi(\sigma)=f_i\circ \phi(\sigma_1)\in\mathcal F_p\subseteq \mathcal  F_r$ by \ref{generalbijections} 
because $S_{f_i}\subseteq A$ and $\sigma_1$ agrees with $\sigma$ on $A$.
Similarly 
 for $i=1, ...,k$
 $g_i\circ \phi(\sigma), ..., g_i\circ \phi(\sigma_1),
 \mathcal F_q\subseteq \mathcal  F_r$ by \ref{generalbijections} 
because $S_{g_i}\subseteq A$ and $\sigma_2$ agrees with $\sigma$ on $B$.
Hence $\sigma\in \Sigma_{\xi, A\cup B}(A_r)$ and
$f_1\circ \phi(\sigma), ..., f_k\circ\phi(\sigma),
g_1\circ \phi(\sigma), ..., g_k\circ\phi(\sigma) \in \mathcal F_r$ as required in 5).

\vskip 6pt
\noindent{\bf Case 2.} $\xi\in A_p\setminus A_q$

\noindent Let $\sigma_1\in \Sigma_{\xi,A}(A_p)$ be such that
$f_1\circ \phi(\sigma_1), ..., f_m\circ\phi(\sigma_1) \in \mathcal F_p$.
Let $\sigma_2\in \Sigma_{\xi_0,B}(A_q)$ be such that
$g_1\circ \phi(\sigma_2), ..., g_k\circ\phi(\sigma_2) \in \mathcal F_q$.
As $\sigma_1|\xi_0=Id_{\xi_0}=\sigma_2|\xi_0$ 
there is a bijection $\sigma$ of $\omega_2$ such that $\sigma|A_r=\sigma_1|A_p\cup \sigma_2|A_q$
and $\sigma$ is the identity on the remaining part of $\omega_2$.
Note that $\sigma|A_r\cap \xi=id_{\xi}$ and that 
$$\sigma[A\cup B]\cap (A\cup B)\setminus\xi\subseteq
(\sigma_1[A]\cap A\setminus\xi)\cup(\sigma_2[B] \cap B\setminus\xi_0)=
\emptyset.$$
Hence $\sigma\in \Sigma_{\xi, A\cup B}(A_r)$. Since
$f_1 ,..., f_k$ depend on $A_p$  we have $f_i\circ\phi(\sigma)=f_i\circ\phi(\sigma_1)$
also since
$g_1 ,..., g_k$ depend on $A_q$  we have $f_i\circ\phi(\sigma)=f_i\circ\phi(\sigma_2)$,
hence 
$f_1\circ \phi(\sigma), ..., f_k\circ\phi(\sigma) 
g_1\circ \phi(\sigma), ..., g_k\circ\phi(\sigma) \in \mathcal F_r$ which completes
the proof of (4) in this case.
\vskip 6pt
\noindent{\bf Case 3.} $\xi\in A_q\setminus A_p$\par

\noindent Let $\sigma\in \Sigma_{\xi,B}$ be such that
$g_1\circ \phi(\sigma), ..., g_k\circ\phi(\sigma) \in \mathcal F_q$.
As $\sigma|\xi=Id_{\xi}$ and all elements of $\mathcal F_p$ depend on
$A_p\subseteq \xi$ we have that $f_i\circ \phi(\sigma)=f_i$
hence $f_1\circ \phi, ...f_m\circ\phi, g_1\circ \phi, ...g_k\circ\phi \in \mathcal F_r$.
Also $(A\cup B)\setminus \xi=A\setminus\xi$ as $B\subseteq A_p\subseteq\xi$, so
$\sigma\in \Sigma_{\xi, A\cup B}$ as required.

\vskip 6pt
To prove (6) we will show that for every finite
$f_1, ..., f_m\in \mathcal F_r$ there are $f_1', ..., f_m'\in \mathcal F_q$
such that $\nabla \{f_1,..., f_m\}$ is homeomorphic to $\nabla \{f_1',...,f_m'\}$.
This will be enough by \ref{subproductsconnected} since $\nabla \{f_1',...,f_m'\}$ is a continuous image of
$\nabla \mathcal F_r$ (by projecting), so is connected by (5) of \ref{definitionP}
for $q\in \PP$.

Let $A=S_{f_1}\cup...\cup S_{f_m}$ and $B=S_{f_1'}\cup...\cup S_{f_m'}$.
Let $\sigma_1\in \Sigma_{\eta_0, \tau[A]}(A_q)$
Define $\sigma': A\cup B\rightarrow A_p\cup A_q$ by
$\sigma'(\alpha)=\sigma_1(\alpha)$ if $\alpha\in B$ and
$\sigma'(\alpha)=\tau(\alpha)$ if $\alpha\in A$. $\sigma$ is well-defined
because both $\sigma_1$ and $\tau$ are the identity on $A_p\cap A_q$.
As $\sigma_1|B\setminus A$ and $\tau|A\setminus B$ have disjoint ranges
it follows that $\sigma'$ is an injection. 
So $\sigma'$ can be extended to a bijection $\sigma$
of $\omega_2$.

Now note that for $i=1, ...,k$ we have $f_i\circ \phi(\tau)=f_i\circ \phi(\sigma)$ by \ref{generalbijections} as
$\tau$ and $\sigma$ agree on $S_{f_i}\subseteq A$, so
using (2) of \ref{isomorphicP} we have
$f_1\circ \phi(\sigma), ..., f_k\circ\phi(\sigma),
g_1\circ \phi(\sigma), ..., g_k\circ\phi(\sigma)\in \mathcal F_q$.

\end{proof}

\begin{lemma}\label{sigmaclosedP} 
Suppose that $(p_n)_{n\in\N}$ is a sequence of conditions of $\PP$
satisfying $p_{n+1}\leq p_n$ and $p_n=(A_n, \mathcal F_n, \mathcal I_n)$ for each $n\in \N$. Then 
 $$p=(\bigcup_{n\in \N}A_n, \bigcup_{n\in \N}\mathcal F_n, \bigcup_{n\in \N}\mathcal I_n).$$
is a condition of $\PP$ satisfying $p\leq p_n$ for each $n\in\N$.

\end{lemma}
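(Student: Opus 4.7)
The plan is to verify each of the six clauses of Definition~\ref{definitionP} for the proposed $p$, exploiting the fact that clauses (4), (5), and (6) each reference only finitely many objects from $\mathcal F_p$, $A_p$, or $\mathcal I_p$, and any such finite collection must already live at some finite stage $p_n$ of the chain. The ordering $p\leq p_n$ is immediate from the definition of $\leq$ since $A_p\supseteq A_n$, $\mathcal F_p\supseteq \mathcal F_n$, and $\mathcal I_p\supseteq \mathcal I_n$ by construction.

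Clauses (1), (2) and (4) I would dispatch directly: a countable union of countable sets is countable; the interval condition $[\alpha,\alpha+\omega)\subseteq A_p$ and the inclusion $\{d_\xi:\xi\in A_p\}\subseteq \mathcal F_p$ pass through unions componentwise; and if $f\in \mathcal F_p$ then $f\in \mathcal F_n$ for some $n$, giving $S_f\in \mathcal I_n\subseteq \mathcal I_p$ witnessing (4).

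For (3), I would first observe that $\mathcal I_p=\bigcup_n \mathcal I_n$ is an ideal of subsets of $A_p$: closure under finite unions follows because the $\mathcal I_n$ form a chain, and closure under subsets follows because any element of $\mathcal I_n$ lies inside $A_n$, so its subsets remain subsets of $A_n$ and hence in $\mathcal I_n$. Countable generation is obtained by amalgamating the countable generating sets of the $\mathcal I_n$, producing a countable generating family for $\mathcal I_p$. The one delicate point is properness: if $A_p\in \mathcal I_n$ for some $n$, then $A_p\subseteq A_n$ (because $\mathcal I_n$ consists of subsets of $A_n$), and since $A_n\subseteq A_p$ we would get $A_n=A_p\in \mathcal I_n$, contradicting the properness of $\mathcal I_n$.

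For (5), given finite $f_1,\dots,f_k\in \mathcal F_p$, $\xi\in A_p$, and $A\in \mathcal I_p$, I would pick $n$ large enough that all of these are already in $p_n$, then apply clause (5) for $p_n$ to obtain $\sigma\in \Sigma_{\xi,A}(A_n)$ with $f_i\circ \phi(\sigma)\in \mathcal F_n\subseteq \mathcal F_p$; since $A_n\subseteq A_p$ gives $\xi\cup(\omega_2\setminus A_p)\subseteq \xi\cup(\omega_2\setminus A_n)$, the same $\sigma$ automatically lies in $\Sigma_{\xi,A}(A_p)$. For (6), any finite $\{f_1,\dots,f_m\}\subseteq \mathcal F_p$ lives in some $\mathcal F_n$, and $\nabla\{f_1,\dots,f_m\}$ is a continuous projection image of the connected space $\nabla\mathcal F_n$, hence connected, so $\nabla\mathcal F_p$ is connected by Lemma~\ref{subproductsconnected}. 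I expect the only step that requires genuine attention to be the properness check in (3); everything else is a direct unwinding of the definitions.
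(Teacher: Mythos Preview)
Your proposal is correct and follows essentially the same approach as the paper's proof. The paper is terser---it simply declares clauses (1)--(4) to be ``clear'' and then handles (5) and (6) exactly as you do (finding a single $n$ containing all the finite data, invoking $\Sigma_{\xi,A}(A_n)\subseteq\Sigma_{\xi,A}(A_p)$, and appealing to Lemma~\ref{subproductsconnected})---whereas you additionally spell out the properness argument for (3), which is a genuine but easy check the paper leaves to the reader.
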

\begin{proof} 
First note that $p\in\PP$.
(1), (2), (3) and (4) are clear. To get (5) 
Note that if  $f_1,..., f_k\in \mathcal F_p$, $A\in \mathcal I_p$ and $\xi\in A_p$, 
then $f_1,..., f_k\in \mathcal F_n$, $A\in \mathcal I_n$
and $\xi\in A_n$ for some $n\in\N$. Then by (5) for $p_n$ 
 there is $\sigma\in \Sigma_{\xi, A}(A_n)$,  with
$f_1\circ \phi(\sigma),...,f_k\circ\phi(\sigma)\in \mathcal F_n.$
So we may use the fact that $\Sigma_{\xi,B}(C)\subseteq \Sigma_{\xi, B}(C')$
whenever $C\subseteq C'$.To get (6) apply  \ref{subproductsconnected}.
The fact that $p\leq p_n$ for each $n\in\N$ is clear.
\end{proof}

\section{The partial order of approximations}

\begin{definition}\label{definitionQ}
We define a partial order $\Q$ which consists
of conditions of the form $p=(A_p,\mathcal F_p, \mathcal I_p, 
\alpha_p, \mathcal X_p, \mathcal P_p)$
such that
\begin{enumerate}
\item $(A_p,\mathcal F_p, \mathcal I_p)\in \PP$
\item  $\alpha_p$ is a countable ordinal,
\item $\mathcal X_p=\{x^p_\beta: \beta<\alpha_p\}$
is a countable dense subset of $\nabla \mathcal F_p$
\item $P_p$ is a countable family of pairs $(L, R)$ of disjoint subsets
$\alpha_p$ such that $\{x_\xi^p: n\in L\}$ and
$\{x_\xi^p: n\in R\}$ are relatively discrete in $\nabla \mathcal F_p$
and 
$${\overline{\{x_\xi^p: \xi\in L\}}}\cap {\overline{\{x_\xi^p: \xi\in R\}}}\not=\emptyset.$$
\end{enumerate}
We let $p\leq q$ if and only if $A_p\supseteq A_q$,  $\mathcal F_p\supseteq \mathcal F_q$,
$\mathcal I_p\supseteq \mathcal I_q$,
$\alpha_p\geq \alpha_q$, $\mathcal P_p\supseteq \mathcal P_q$
and  $x_\beta^p|\nabla \mathcal F_q=x^q_\beta$ for
any $\beta<\alpha_q$.
\end{definition}

The point here is that we will make our spaces $\nabla \mathcal F_p$ quite complicated and
rich in suprema of bounded sequences  from $C(\nabla \mathcal F_p)$, but we will
need to keep promises (that is, preserve elements of $\mathcal P_p$) about not
separating some pairs of countable sets of points. If it is done in a
sufficiently random manner,  all the operators which are not weak multipliers are eliminated
like in \cite{few} or \cite{big}.

\begin{definition}\label{isomorphicQ} Let $p,q\in \Q$. We say that they are isomorphic
if there is a bijection $\tau:\omega_2\rightarrow \omega_2$ which witnesses  that
$(A_p,\mathcal F_p, \mathcal I_p)$ and $(A_q,\mathcal F_q, \mathcal I_q)$ are isomorphic
as elements of $\PP$ and
\begin{enumerate}
\item $\alpha_p=\alpha_q$,
\item $x_\xi^p(f)=x_\xi^q(f\circ \phi(\tau))$ for all $f\in \mathcal F_p$ and all $\xi<\alpha_p=\alpha_q$,
\item If $f\in \mathcal F_p\cap \mathcal F_q$, then $x_\xi^p(f)=x_\xi^q(f)$
for all $\xi<\alpha_p$
\item $\mathcal P_q=\mathcal P_p$.

\end{enumerate}
\end{definition}

\begin{lemma}\label{amalgamationQ}
Suppose that $p,q\in \Q$ are isomorphic. Then there is 
$r=(A_p\cup A_q,\mathcal F_p\cup \mathcal F_q, \langle \mathcal I_p\cup \mathcal I_q\rangle, 
\alpha_p+\omega, \mathcal X, \mathcal P_p )$ which is a condition of $\Q$ stronger
than both $p$ and $q$ where $\mathcal X=\{x^r_\beta:\beta<\alpha_r\}$ consists of points
which for every $f\in \mathcal F_q$ satisfy:
$$x_\xi^r(f)=x_\xi^r(f\circ \phi(\tau))\ \ \hbox{for all}\  f\in \mathcal F_p,\ \xi<\alpha_p=\alpha_q.$$
\end{lemma}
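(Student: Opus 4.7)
The plan is to verify $r\in\Q$ and $r\leq p,q$. By Lemma \ref{amalgamationP} the $\PP$-part $(A_p\cup A_q,\mathcal F_p\cup\mathcal F_q,\langle\mathcal I_p\cup\mathcal I_q\rangle)$ is already a condition of $\PP$, handling clause (1) of Definition \ref{definitionQ}. Since $\mathcal P_r=\mathcal P_p=\mathcal P_q$ by clause (4) of Definition \ref{isomorphicQ}, the containment $\mathcal P_r\supseteq\mathcal P_p,\mathcal P_q$ is automatic. The substantive work consists in exhibiting the enumeration $\mathcal X_r$ with the stated properties and in verifying clause (4) of Definition \ref{definitionQ} for every $(L,R)\in\mathcal P_r$.

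For each $\xi<\alpha_p$ I will build $x_\xi^r\in\nabla(\mathcal F_p\cup\mathcal F_q)$ restricting to $x_\xi^p$ on $\mathcal F_p$ and to $x_\xi^q$ on $\mathcal F_q$. Pick $u\in K$ with $(\Pi\mathcal F_p)(u)=x_\xi^p$. Using Proposition \ref{generalbijections} together with $\mathcal F_q=\{f\circ\phi(\tau):f\in\mathcal F_p\}$, one checks that $v:=\phi(\tau^{-1})(u)$ satisfies $(\Pi\mathcal F_q)(v)=x_\xi^q$ via the computation $(f\circ\phi(\tau))(v)=f(\phi(\tau)\phi(\tau^{-1})(u))=f(u)=x_\xi^p(f)=x_\xi^q(f\circ\phi(\tau))$, the last equality by clause (2) of Definition \ref{isomorphicQ}. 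Because $\tau\upharpoonright(A_p\cap A_q)=\mathrm{Id}$, the Boolean isomorphism $h(\tau)$ fixes $Co(A_p\cap A_q)$ pointwise, so $u\cap Co(A_p\cap A_q)=v\cap Co(A_p\cap A_q)$, and Lemma \ref{ultrafilters} yields $w\in K$ extending both $u\cap Co(A_p)$ and $v\cap Co(A_q)$. Since every $f\in\mathcal F_p$ depends on $A_p$ we have $f(w)=x_\xi^p(f)$, and symmetrically for $\mathcal F_q$; set $x_\xi^r=(\Pi(\mathcal F_p\cup\mathcal F_q))(w)$. The required equation $x_\xi^r(f)=x_\xi^r(f\circ\phi(\tau))$ for $f\in\mathcal F_p$ is then immediate from clause (2) of Definition \ref{isomorphicQ}. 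For $\alpha_p\leq\xi<\alpha_p+\omega$ I append a countable dense sequence of $\nabla\mathcal F_r$, available because $\mathcal F_r$ is countable and hence $\nabla\mathcal F_r\subseteq[0,1]^{\mathcal F_r}$ is compact metrizable; this guarantees density of $\mathcal X_r$.

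To verify clause (4) of Definition \ref{definitionQ} for $(L,R)\in\mathcal P_r=\mathcal P_p$, relative discreteness of $\{x_\xi^r:\xi\in L\}$ and $\{x_\xi^r:\xi\in R\}$ in $\nabla\mathcal F_r$ follows from the corresponding property in $\nabla\mathcal F_p$ by pulling separating neighborhoods back through the continuous surjection $\pi_p\colon\nabla\mathcal F_r\to\nabla\mathcal F_p$. The harder requirement is that the closures intersect. The key observation is that the bijection $f\mapsto f\circ\phi(\tau)$ from $\mathcal F_p$ onto $\mathcal F_q$ induces a homeomorphism $\sigma^{*}\colon\nabla\mathcal F_q\to\nabla\mathcal F_p$ with $\sigma^{*}(x_\xi^q)=x_\xi^p$ for all $\xi<\alpha_p$, so the continuous injection $\iota=(\pi_p,\pi_q)\colon\nabla\mathcal F_r\to\nabla\mathcal F_p\times\nabla\mathcal F_q$ sends each $x_\xi^r$ to $(x_\xi^p,(\sigma^{*})^{-1}(x_\xi^p))$. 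Given any $y\in\overline{\{x_\xi^p:\xi\in L\}}\cap\overline{\{x_\xi^p:\xi\in R\}}$, nets $(\xi_\alpha)\subseteq L$ and $(\eta_\beta)\subseteq R$ with $x_{\xi_\alpha}^p,x_{\eta_\beta}^p\to y$ automatically yield $x_{\xi_\alpha}^q,x_{\eta_\beta}^q\to(\sigma^{*})^{-1}(y)$ by continuity, so $\iota(x_{\xi_\alpha}^r)$ and $\iota(x_{\eta_\beta}^r)$ both converge to $(y,(\sigma^{*})^{-1}(y))$. Since $\iota(\nabla\mathcal F_r)$ is closed in the product, this common limit lies in the image, and its $\iota$-preimage is a single point of $\nabla\mathcal F_r$ belonging to both closures.

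The central obstacle is precisely this last step: preserving a common accumulation point when passing from the smaller space $\nabla\mathcal F_p$ to the richer space $\nabla\mathcal F_r$. Without the coordinated enumeration imposed by Definition \ref{isomorphicQ}, the cluster points of the $q$-projections of $(x_{\xi_\alpha}^r)$ and $(x_{\eta_\beta}^r)$ could be unrelated and the intersection promised by $\mathcal P_p$ could be destroyed; it is the tight coupling $x_\xi^p(f)=x_\xi^q(f\circ\phi(\tau))$ that forces both families of nets to approach the \emph{same} diagonal point $(y,(\sigma^{*})^{-1}(y))$ and hence the same lift in $\nabla\mathcal F_r$. The remaining conditions $r\leq p$ and $r\leq q$ then follow directly from the construction of $x_\xi^r$ as a simultaneous extension of $x_\xi^p$ and $x_\xi^q$.
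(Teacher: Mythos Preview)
Your proof is correct and follows essentially the same approach as the paper: the construction of $x_\xi^r$ via Lemma~\ref{ultrafilters} is identical, and the verification of clause~(4) of Definition~\ref{definitionQ} rests on the same key observation that the $\mathcal F_q$-coordinates of $x_\xi^r$ are determined by its $\mathcal F_p$-coordinates through the bijection $f\mapsto f\circ\phi(\tau)$. The only cosmetic difference is that the paper carries out the closure-intersection check by an explicit $\varepsilon$-argument over finite $\mathcal F\subseteq\mathcal F_r$ (transporting $\mathcal F\cap\mathcal F_q$ back into $\mathcal F_p$ via $\phi(\tau^{-1})$ and invoking the promise there), whereas you package the same idea topologically via the embedding $\iota=(\pi_p,\pi_q)$ and the homeomorphism $\sigma^*$; both arguments are equivalent in this compact metrizable setting.
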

\begin{proof} By \ref{amalgamationP} 
$(A_p\cup A_q,\mathcal F_p\cup \mathcal F_q, \langle \mathcal I_p\cup \mathcal I_q\rangle)\leq 
(A_p,\mathcal F_p, \mathcal I_p), (A_q,\mathcal F_q, \mathcal I_q)$.
Given $\beta<\alpha_p$ find $u\in K$ such that $(\Pi\mathcal F_p)(u)=x_\beta^p$.
Let $v=\phi(\tau)^{-1}(u)$, then 
for all $f\in\mathcal F_p$ we have $f\circ \phi(\tau)(v)=f(u)=x_\beta^p(f)=x_\beta^q(f\circ\phi(\tau))$, and so
$(\Pi\mathcal F_q)(v)=x_\beta^q$ by 2) of \ref{isomorphicQ} and 2) of
\ref{isomorphicP}. Note that $v\cap Co(A_p\cap A_q)=u\cap Co(A_p\cap A_q)$
because $\tau$ is the identity on $A_p\cap A_q$.
So by \ref{ultrafilters} there is an ultrafilter $w\in K$ which includes
$u\cap Co(A_p)$ and $v\cap Co(A_q)$, hence $x^\beta_r=(\Pi\mathcal F_r)(w)$ belongs to
$\nabla \mathcal F_r$ and satisfies the requirement of the lemma.
So we are allowed for $\beta<\alpha$ to define $x_\beta^r(f)=x_\beta^p(f)$ if $f\in \mathcal F_p$ and
$x_\beta^r(f)=x_\beta^q(f)$ if $f\in \mathcal F_q$. 

Define $x^r_{\alpha+n}$s to be points of some dense subset of
$\nabla \mathcal F_r$ so that (3) of \ref{definitionQ} will
be satisfied. So to finish the proof we need to show  (4) of
\ref{definitionQ}, i.e., that the promises are preserved, i.e., that
for each $(L,R)\in \mathcal P_r$
  $\{x_\xi^r: n\in L\}$ and
$\{x_\xi^r: n\in R\}$ are relatively discrete in $\nabla \mathcal F_r$
and 
$${\overline{\{x_\xi^r: \xi\in L\}}}\cap {\overline{\{x_\xi^r: \xi\in R\}}}\not=\emptyset.$$
in $\nabla \mathcal F_r$.
Of course the canonical projections from $\nabla \mathcal F_r$ onto $\nabla \mathcal F_p$
or $\nabla \mathcal F_q$ are continuous and send $x_\beta^r$ to $x_\beta^p$
or $x_\beta^q$ respectively. As the images are discrete, the preimages must be as well
so for each $(L,R)\in \mathcal P_r$
  $\{x_\xi^r: n\in L\}$ and
$\{x_\xi^r: n\in R\}$ are relatively discrete in $\nabla \mathcal F_r$. 
Now assume that $(L,R)\in \mathcal P_r=\mathcal P_p=\mathcal P_q$.
It is enough to prove that for each finite $\mathcal F\subseteq \mathcal F_r$, for every $\varepsilon>0$
there are $\xi\in L$ and $\xi'\in R$ such that $|x_\xi(f)-x_{\xi'}(f)|<\varepsilon$ for all
$f\in\mathcal F$.
Fix $\mathcal F$ and $\varepsilon$ as above.
Let $\tau: \omega_2\rightarrow \omega_2$ be a  bijection like in \ref{isomorphicQ} witnessing 
the isomorphism of $p$ and $q$. Let 
$$\mathcal G=(\mathcal F\cap \mathcal F_p)\cup\{f\circ\phi(\tau^{-1}): 
f\in \mathcal F\cap \mathcal F_q\}\subseteq \mathcal F_p.$$
By (4) of \ref{definitionQ} find  $\xi\in L$ and $\xi'\in R$ such that $|x_\xi^r(f)-x_{\xi'}^r(f)|<\varepsilon$
 holds for each $f\in \mathcal G$. In particular for each $f\in \mathcal F\cap \mathcal F_p$ we have 
$$|x_\xi^r(f)-x_{\xi'}^r(f)|=|x_\xi^p(f)-x_{\xi'}^p(f)|<\varepsilon.$$
But by (2) of \ref{isomorphicQ}, for $f\in \mathcal F\cap \mathcal F_q$ we have that 
$$|x_\xi^r(f)-x_{\xi'}^r(f)|=|x_\xi^q(f\circ\phi(\tau^{-1})\circ\phi(\tau))-
x_{\xi'}^q (f\circ\phi(\tau^{-1})\circ\phi(\tau))|=$$ $$=|x_\xi^p(f\circ\phi(\tau^{-1}))-x_{\xi'}^p
 (f\circ\phi(\tau^{-1}))|=|x_\xi^p(g)-x_{\xi'}^p (g)|<\varepsilon,$$
where $g$ is some element of $\mathcal G$, which proves that (4) of \ref{definitionQ} holds for $r$
and completes the proof of the lemma.
\end{proof}

\begin{lemma}\label{cardinalityP}
Assume CH. Let $A\subseteq\omega_2$ be a countable set. There are $\omega_1$
conditions $p$ of $\PP$ with $A_p=A$.
\end{lemma}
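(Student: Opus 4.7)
The plan is to bound the number of conditions $p \in \mathbb{P}$ with $A_p = A$ by separately bounding the number of possibilities for the two remaining data, namely $\mathcal{F}_p$ and $\mathcal{I}_p$, and then invoking CH to collapse the product to $\omega_1$.

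First I would show that every condition $p \in \mathbb{P}$ with $A_p = A$ has the property that each $f \in \mathcal{F}_p$ depends on $A$. Indeed, by clause (4) of \ref{definitionP}, $f$ depends on some countable $S_f \in \mathcal{I}_p$, and $\mathcal{I}_p$ is an ideal of subsets of $A_p = A$, so $S_f \subseteq A$ and in particular $f$ depends on $A$. Consequently the set of all $f$ that can appear in $\mathcal{F}_p$ lies in $\{f \in C_I(K) : f \text{ depends on } A\}$, which factors through $C(K_A)$. By Lemma \ref{cardinality}, $|C(K_A)| = 2^\omega$, so there are at most $2^\omega$ such functions. The number of countable subsets of a set of size $2^\omega$ is $(2^\omega)^\omega = 2^\omega$, giving at most $2^\omega$ possibilities for $\mathcal{F}_p$.

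Next I would count the $\mathcal{I}_p$'s. Each such ideal is countably generated by a countable family $\mathcal{J} \subseteq [A]^{\leq \omega}$; since $A$ is countable there are $2^\omega$ subsets of $A$, and hence at most $(2^\omega)^\omega = 2^\omega$ countable families $\mathcal{J}$, each determining a unique $\mathcal{I}_p = \langle \mathcal{J}\rangle$. So there are at most $2^\omega$ admissible ideals.

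Multiplying, the total number of conditions with $A_p = A$ is at most $2^\omega \cdot 2^\omega = 2^\omega$, which under CH equals $\omega_1$. The lower bound is easy (one can, for example, vary the countable ideal $\mathcal{I}_p$ or take $\mathcal{F}_p = \{d_\xi : \xi \in A\}$ together with any single added function from a $\omega_1$-sized family in $C(K_A)$, checking that conditions (1)--(6) of \ref{definitionP} are still satisfied). I do not anticipate a serious obstacle: the whole argument is a straightforward cardinal arithmetic count, and the only thing to be careful about is observing that clause (4) forces every member of $\mathcal{F}_p$ to factor through the fixed countable Boolean algebra $Co(A)$, so that Lemma \ref{cardinality} is directly applicable.
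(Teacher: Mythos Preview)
Your argument is correct and follows essentially the same counting as the paper: bound the functions via Lemma~\ref{cardinality}, bound the ideals by counting countable generating families in $\wp(A)$, and multiply under CH. One remark: the paper's own proof additionally counts the components $\alpha_p$, $\mathcal X_p$, $\mathcal P_p$, because the lemma is actually applied in Lemma~\ref{omega2ccQ} to conditions of $\Q$ rather than $\PP$ (the statement's reference to $\PP$ is a minor slip); your proof handles the statement as written, and extending the count to the three extra $\Q$-coordinates is equally routine. Also note that only the upper bound is used downstream, and indeed for arbitrary countable $A$ the lower bound need not hold (e.g., if $A$ fails clause~(1) of Definition~\ref{definitionP} there are no such conditions at all), so your sketch of the lower bound is unnecessary and would require care about clause~(5) to make rigorous.
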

\begin{proof}
By \ref{cardinality} there are $\omega_1$ functions in $C_I(K)$ 
which depend on $A$, so using the fact that 
there are continuum countable subsets of $\omega_1$ we see that
there are $\omega_1$ possibilities for the family $\mathcal F_p$.
There are also continuum many possibilities for the countable
set in $\wp(A)$ which  generates $\mathcal I_p$. Similar argument
shows that there are continuum many possibilities for $P_p$ and of course
for $\alpha_p$. 
Note that given a countable $\mathcal F_p\subseteq C_I(K)$, 
$\nabla \mathcal F_p$ is completely determined as a subset of
$[0,1]^{\mathcal F_p}$ which has cardinality continuum, so we have
again continuum many possibilities for its countable dense subsets $\mathcal X_p$.
This completes the proof.
\end{proof}

\begin{lemma}\label{omega2ccQ} Assume CH.
$\Q$ satisfies the $\omega_2$-c.c.
\end{lemma}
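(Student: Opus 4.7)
The plan is a standard $\Delta$-system plus type-counting argument, reducing the conclusion to the amalgamation Lemma \ref{amalgamationQ}. Suppose for contradiction that $\{p_\xi : \xi < \omega_2\} \subseteq \Q$ is an antichain. My goal is to locate $\xi < \eta$ such that $p_\xi$ and $p_\eta$ are isomorphic in the sense of \ref{isomorphicQ}; by \ref{amalgamationQ} they then admit a common extension, contradicting the antichain hypothesis.

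I first thin the family. Apply the $\Delta$-system lemma to $\{A_{p_\xi}\}_{\xi < \omega_2}$; under CH we have $2^{\aleph_0} = \omega_1 < \omega_2$, so an $\omega_2$-sized subfamily forms a $\Delta$-system with some countable root $\Delta$. A short transfinite recursion, using that each tail $A_{p_\xi} \setminus \Delta$ is a countable bounded subset of $\omega_2$ and that the tails are pairwise disjoint, lets me thin further so that $\sup \Delta < \min(A_{p_\xi} \setminus \Delta)$ for every remaining $\xi$ and $\sup(A_{p_\xi} \setminus \Delta) < \min(A_{p_\eta} \setminus \Delta)$ whenever $\xi < \eta$. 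This arranges $A_{p_\xi} \cap A_{p_\eta} = \Delta < A_{p_\xi} \setminus A_{p_\eta} < A_{p_\eta} \setminus A_{p_\xi}$ as demanded by \ref{isomorphicP}. Thinning further, I may assume all $A_{p_\xi}$ have a common order type $\gamma < \omega_1$, giving canonical order-preserving bijections $\tau_{\xi,\eta} : A_{p_\xi} \to A_{p_\eta}$ that fix $\Delta$ pointwise and extend (self-inversely, by swapping the two tails) to permutations of $\omega_2$ that fix everything outside $A_{p_\xi} \cup A_{p_\eta}$.

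To each $p_\xi$ I then associate a \emph{type}: the data pulled back to a fixed template via the order-isomorphism $\iota_\xi : \gamma \to A_{p_\xi}$. Concretely the type records the ordinal $\alpha_{p_\xi}$, a countable generating family for $\iota_\xi^{-1}[\mathcal I_{p_\xi}]$, an enumeration of $\mathcal F_{p_\xi}$ in which each function is given by its dependency set and its abstract code in the corresponding $C(K_A)$ (which exists by Lemma \ref{cardinality}), the real-valued matrix $(x^{p_\xi}_\beta(f_n^\xi))_{\beta < \alpha_{p_\xi},\, n < \omega}$, and $\mathcal P_{p_\xi}$. Each ingredient is a countable object taking values in a set of cardinality $\omega_1$ under CH (using Lemma \ref{cardinality} to bound the number of functions depending on a fixed countable set by $\omega_1$, and Lemma \ref{cardinalityP} to bound the combinatorial data), so there are at most $\omega_1$ types. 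Pigeonhole yields $\xi < \eta$ sharing a type; combining this with the naturality of $\phi$ from Proposition \ref{generalbijections} gives all four clauses of \ref{isomorphicQ} for $\tau_{\xi,\eta}$, because for functions whose code is concentrated on $\Delta$ the bijection $\tau_{\xi,\eta}$ acts trivially, so literal agreement of the $\Delta$-dependent parts of $\mathcal F_{p_\xi}$ and $\mathcal F_{p_\eta}$ (and of their $x_\beta$-values) is forced by the code equality.

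The main delicate point is the coding in step three: I must ensure that ``same abstract code'' really translates under $\tau_{\xi,\eta}$ into $\mathcal F_{p_\eta} = \{f \circ \phi(\tau_{\xi,\eta}^{-1}) : f \in \mathcal F_{p_\xi}\}$ with matching value-matrix, rather than merely isomorphic codes. This amounts to showing that the representation of elements of $C(K_A)$ via countable families of finite Boolean combinations of clopens (as in the proof of Lemma \ref{cardinality}) is equivariant with respect to the Stone-dual action $\phi$ of bijections of $\omega_2$; this equivariance follows directly from the construction in Proposition \ref{generalbijections} and the identity $h(\sigma)[Co(A)] = Co(\sigma[A])$ established there, but it must be written out carefully. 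Once this naturality is in place, Lemma \ref{amalgamationQ} furnishes a common lower bound of $p_\xi$ and $p_\eta$, completing the proof.
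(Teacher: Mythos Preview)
Your proposal is correct and follows essentially the same approach as the paper: $\Delta$-system thinning under CH, transport to a fixed template via the order isomorphisms, a type count bounded by $\omega_1$ via Lemma~\ref{cardinalityP}, and then Lemma~\ref{amalgamationQ} to finish. The only stylistic difference is that the paper realizes your ``type'' as an actual condition $q_\xi\in\Q$ with $A_{q_\xi}=\theta$ (the common order type) and invokes Lemma~\ref{cardinalityP} directly to find $q_\xi=q_{\xi'}$, rather than building an ad hoc coding; this packages the equivariance verification you flag into the transport $f\mapsto f\circ\phi(\tau_\xi)$ and avoids having to argue it separately.
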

\begin{proof}
Let $(p_\xi:\xi<\omega_2)$ be a family of elements of $\Q$. 
Using the $\Delta$-system lemma for the family $(A_{p_\xi}: \xi<\omega_2)$,
which holds under CH, we may assume that these sets form a $\Delta$-system
with root $\Delta<A_{p_\xi}\setminus \Delta$ of the same order type
$\theta<\omega_1$ for each $\xi<\omega_2$. Again applying CH we may
assume that $A_{p_\xi}\setminus \Delta<A_{p_\eta}$ for every $\xi<\eta<\omega_2$

Consider the unique order-preserving bijections $\tau_\xi: A_{p_\xi}\rightarrow \theta$.
And conditions of $\Q$ of the form
$q_\xi=(\theta, \mathcal F_\xi, \mathcal I_\xi, \alpha_\xi, \mathcal X_\xi, \mathcal P_\xi)$ where
\begin{itemize}
\item $\mathcal F_\xi=\{ f\circ \phi(\tau_\xi): f\in \mathcal F_{p_\xi}\}$,
\item $\mathcal I_\xi=\{\tau_\xi[A]: A\in \mathcal I_{p_\xi}\}$.
\item $\alpha_\xi=\alpha_{p_\xi}$,
\item $\mathcal X_\xi=\{x^\xi_\beta: \beta<\alpha_\xi\}$ is a subset of $\nabla \mathcal F_\xi$ satisfying
$x_\beta^\xi(f\circ\phi(\tau_\xi))=x_\beta^{p_\xi}(f)$ 
for all $f\in \mathcal F_p$. 
\item $\mathcal P_\xi=\mathcal P_{p_\xi}$.
\end{itemize}
Using  CH by \ref{cardinalityP} there are at most 
$\omega_1$ conditions of $\Q$ with the first coordinate equal to $\eta$. Thus
we have $q_\xi=q_{\xi'}$ for distinct $\xi, \xi'<\omega_2$. Now an extension
$\tau$ of $\tau_{\xi'}^{-1}\circ \tau_{\xi}$ witnesses the isomorphism between $p_\xi$ and $p_{\xi'}$.
\end{proof}

\begin{lemma}\label{sigmaclosedQ}
Suppose that $(p_n)_{n\in\N}$ is a sequence of conditions of $\Q$
satisfying $p_{n+1}\leq p_n$ and $p_n=(A_n, \mathcal F_n, \mathcal I_n, \alpha_n,
\mathcal X_n, \mathcal I_n)$ for each $n\in \N$. Then there is
 $$p=(\bigcup_{n\in \N}A_n, \bigcup_{n\in \N}\mathcal F_n, 
\bigcup_{n\in \N}\mathcal I_n,\alpha, \mathcal X, \bigcup_{n\in \N} P_n)$$
which is a condition of $\PP$ satisfying $p\leq p_n$ for each $n\in\N$
where $\alpha=\sup_{n\in \N}\alpha_n$ and  $\mathcal X=\{x_\beta: \beta<\alpha\}$ is a subset of $\nabla  \bigcup_{n\in \N}\mathcal F_n$ satisfying
$x_\beta(f)=x_\beta^{p_n}(f)$ for $f\in \mathcal F_n$ . In particular
$\Q$ is $\sigma$-closed.
\end{lemma}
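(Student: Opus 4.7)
The plan is to apply Lemma \ref{sigmaclosedP} as a black box to the first three coordinates, which already yields that $(A_p,\mathcal F_p,\mathcal I_p)=(\bigcup_n A_n,\bigcup_n \mathcal F_n,\bigcup_n \mathcal I_n)\in\PP$ and is below each $(A_n,\mathcal F_n,\mathcal I_n)$ in the $\PP$-order. Hence only the $\Q$-specific clauses remain: that $\alpha=\sup_n\alpha_n$ is a countable ordinal (immediate, as a countable supremum of countable ordinals), that $\mathcal X$ is a well-defined countable dense subset of $\nabla \mathcal F_p$ satisfying the coherence condition, and that $\bigcup_n \mathcal P_n$ is still a family of valid promises in the richer space $\nabla \mathcal F_p$.

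To construct $x_\beta\in \nabla \mathcal F_p$ for each $\beta<\alpha$, fix $\beta$ and let $n_0$ be least with $\beta<\alpha_{n_0}$. For $n\ge n_0$ set
$$C_n=\{u\in K:\ f(u)=x_\beta^{p_n}(f)\ \hbox{for every}\ f\in\mathcal F_n\}.$$
Each $C_n$ is a closed subset of $K$, nonempty since $x_\beta^{p_n}\in\nabla \mathcal F_n=(\Pi\mathcal F_n)[K]$, and the coherence clause $x_\beta^{p_{n+1}}|\nabla \mathcal F_n=x_\beta^{p_n}$ built into the $\Q$-order gives $C_{n+1}\subseteq C_n$. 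By compactness of $K$ pick $u\in\bigcap_{n\ge n_0}C_n$ and set $x_\beta=(\Pi\mathcal F_p)(u)$. Then $x_\beta(f)=x_\beta^{p_n}(f)$ for every $f\in \mathcal F_n$, which is precisely the required compatibility, and in particular $x_\beta^p|\nabla \mathcal F_n=x_\beta^{p_n}$ so that $p\leq p_n$ in $\Q$.

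Density of $\mathcal X=\{x_\beta:\beta<\alpha\}$ in $\nabla \mathcal F_p$ is routine: any basic open $U\subseteq \nabla \mathcal F_p$ depends on finitely many coordinates $f_1,\ldots,f_k$, which lie in some $\mathcal F_n$; under the continuous surjection $\pi_{\mathcal F_n\mathcal F_p}$, $U$ is the preimage of a basic open set of $\nabla \mathcal F_n$ meeting $\mathcal X_n$ by density of the latter, say at $x_\beta^{p_n}$, and then $x_\beta\in U$. For preservation of each $(L,R)\in\mathcal P_n\subseteq \mathcal P_p$, relative discreteness of $\{x_\xi:\xi\in L\}$ and $\{x_\xi:\xi\in R\}$ in $\nabla \mathcal F_p$ is obtained by pulling back through $\pi_{\mathcal F_n\mathcal F_p}$ the neighborhoods that separate $\{x_\xi^{p_n}:\xi\in L\}$ and $\{x_\xi^{p_n}:\xi\in R\}$ in $\nabla \mathcal F_n$. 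For the non-separation of the closures I use the standard characterization that in a compact Hausdorff space embedded in a product, $\overline{A}\cap\overline{B}\neq\emptyset$ iff for every finite $\mathcal F'\subseteq \mathcal F_p$ and every $\varepsilon>0$ there are $a\in A$, $b\in B$ with $|a(f)-b(f)|<\varepsilon$ for all $f\in\mathcal F'$ (forward direction by choosing small enough basic neighborhoods of an intersection point; backward by noting that the corresponding closed subsets of $\overline A\times\overline B$ form a family with the finite intersection property whose intersection consists of diagonal pairs). Given such $\mathcal F'$ and $\varepsilon$, choose $n$ large enough that $\mathcal F'\subseteq \mathcal F_n$ and $(L,R)\in\mathcal P_n$; the promise in $\mathcal P_n$ yields $\xi\in L,\xi'\in R$ with $|x_\xi^{p_n}(f)-x_{\xi'}^{p_n}(f)|<\varepsilon$ for $f\in\mathcal F'$, and by coherence the same inequality holds for $x_\xi,x_{\xi'}$ in $\nabla \mathcal F_p$.

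The main obstacle is the last step: one might worry that promises could fail to survive because at each stage new functions are added to $\mathcal F_p$, any of which might a priori separate the two countable sets in the limit. The finite-$\varepsilon$ characterization bypasses this because each test only consults finitely many functions, which always lie already in some $\mathcal F_n$ where the promise is in force; the new functions in $\mathcal F_p\setminus \mathcal F_n$ never enter any single test. Once this observation is made, the argument is a direct stage-by-stage reduction, and the $\sigma$-closedness of $\Q$ follows.
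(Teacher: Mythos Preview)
Your proof is correct and follows essentially the same approach as the paper: both construct $x_\beta$ via compactness of $K$ (the paper phrases it as taking an accumulation point of preimages $u_n$, you phrase it as intersecting the nested closed fibers $C_n$, which is the same argument). If anything, your write-up is more complete: you invoke Lemma~\ref{sigmaclosedP} as a black box rather than reproving it inline, and you explicitly verify density of $\mathcal X$ and preservation of the promises in $\mathcal P_p$, both of which the paper's proof leaves implicit.
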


\begin{proof}
First we need to prove the existence of
$x_\beta^p$ as in the lemma. Let $u_n\in K$ be such that
$(\Pi \mathcal F_n)(u_n)=x_\beta^{p_n}$. Let $u\in K$ be a complete accumulation point 
of $u_n$'s in $K$. We claim that $x_\beta^{p}=(\Pi \mathcal F_n)(u)$ works. 
For $f\in F_n$, for $k\geq n$ we have $f(u_k)=x^{n}_\beta$ by the assumption that
$p_k\leq p_n$, so $f(u)=x^{n}_\beta$ must hold as well.

Now we claim that $p\in \PP$ and $p\leq p_n$ for each $n\in \N$.
It is enough to prove the latter.

(1), (2), (3) and (4) are clear. To get (5) 
Note that if  $f_1,..., f_k\in \mathcal F_p$, $A\in \mathcal I_p$ and $\xi\in A_p$, 
then $f_1,..., f_k\in \mathcal F_n$, $A\in \mathcal I_n$
and $\xi\in A_n$ for some $n\in\N$. Then by (5) for $p_n$ 
 there is $\sigma\in \Sigma_{\xi, A}(A_n)$,  with
$f_1\circ \phi(\sigma),...,f_k\circ\phi(\sigma)\in \mathcal F_n.$
So we may use the fact that $\Sigma_{\xi,B}(C)\subseteq \Sigma_{\xi, B}(C')$
whenever $C\subseteq C'$.
To get (6) apply  \ref{subproductsconnected}.
\end{proof}

\section{Adding suprema of disjoint sequences of functions}

Suppose that $L$ is a compact space.
We say that functions $f_n:L\rightarrow \R$ for $n\in \N$ are pairwise disjoint if 
$f_n(x)f_{n'}(x)=0$ for all distinct $n,n'\in\N$ and all $x\in L$.
$GR(f)$ will denote the graph of a function $f$.
We need a simple lemma about pointwise sums of pairwise disjoint sequences of functions:

\begin{lemma}\label{lemmadisjointsums}
Suppose $f_n:L\rightarrow [0,1]$ for $n\in\N$ are pairwise disjoint. Then:
\begin{enumerate}
\item If $(x,t)\in GR(\sum_{n\in \N} f_n)$, then there is $n'\in \N$ such that
for all $m>n'$ we have $(x,t)\in  GR(\sum_{n=0}^m f_n)$
\end{enumerate}
\end{lemma}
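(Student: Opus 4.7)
The plan is to exploit pairwise disjointness pointwise: at a fixed $x \in L$, the condition $f_n(x)f_{n'}(x)=0$ for distinct $n,n'$ forces at most one term of the sequence $(f_n(x))_{n\in\N}$ to be nonzero. The lemma is then essentially a triviality about an eventually constant partial-sum sequence.

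More concretely, suppose $(x,t)\in GR(\sum_{n\in\N}f_n)$, so $t = \sum_{n\in\N}f_n(x)$. First I would split into two cases. In the degenerate case where $f_n(x)=0$ for every $n\in\N$, we have $t=0$ and $\sum_{n=0}^m f_n(x)=0=t$ for every $m$, so any $n'\in\N$ works. Otherwise pick the unique $n_0\in\N$ with $f_{n_0}(x)\neq 0$; uniqueness is forced by the disjointness hypothesis $f_n(x)f_{n'}(x)=0$ for $n\neq n'$. Then for every $m \geq n_0$ we have
$$\sum_{n=0}^m f_n(x) = f_{n_0}(x) = \sum_{n\in\N}f_n(x) = t,$$
so $n' = n_0$ witnesses the conclusion.

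There is no real obstacle: the only subtlety is realizing that pointwise pairwise disjointness in $[0,1]$-valued functions reduces the series to a sum with at most one nonzero term, which makes both the convergence of the infinite sum and the stabilization of the partial sums automatic. I would present the proof in one short paragraph without case analysis, simply observing that the set $\{n : f_n(x)\neq 0\}$ has cardinality at most one, so the partial sums $\sum_{n=0}^m f_n(x)$ are eventually constantly equal to the full sum $t$.
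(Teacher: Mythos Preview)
Your argument is correct and is exactly the intended one: the paper states this lemma without proof, calling it a ``simple lemma about pointwise sums of pairwise disjoint sequences of functions,'' and your observation that $\{n:f_n(x)\neq 0\}$ has at most one element is precisely the point.
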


\begin{lemma}\label{connectedgraph} Suppose $L$ is a metrizable, compact and connected  space and
that for each $k\leq m\in\N$ the sequences of functions
$(f_n^k)_{n\in \N}$ in $C_I(L)$ are pairwise disjoint. 
Let $F: L\rightarrow[0,1]^m$ be defined by
$$F(x)=(\sum_{n\in \N} f_n^1(x),..., \sum_{n\in \N} f_n^m(x))=\prod_{i\leq m}\sum_{n\in \N} f_n^i(x).$$
for every $x\in L$. Then
the closure of the graph of $F$ is a connected subspace of $L\times[0,1]^m$.
\end{lemma}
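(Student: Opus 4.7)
The plan is to realize $\overline{GR(F)}$ as the decreasing intersection of a sequence of continua in $L\times[0,1]^m$ and then to invoke the classical fact that such an intersection is itself a continuum. First I would observe that each partial sum $F_N(x)=\sum_{n=0}^N(f_n^1(x),\dots,f_n^m(x))$ is a continuous map $L\to[0,1]^m$, so the graph $G_N=\{(x,F_N(x)):x\in L\}$ is a continuum, being the image of the connected compact space $L$ under $x\mapsto(x,F_N(x))$. Applying Lemma \ref{lemmadisjointsums} in each coordinate, for every $x\in L$ there is a finite $n(x)\in\N$ such that $F_M(x)=F(x)$ for all $M\ge n(x)$; in particular $(x,F(x))\in G_M$ for every such $M$, so $GR(F)\subseteq\bigcup_{M\ge N_0}G_M$ for every $N_0\in\N$.

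Next I would link consecutive graphs by continuous ``strips''. Define $T_N\colon L\times[0,1]\to L\times[0,1]^m$ by
\[
T_N(x,s)=\bigl(x,\,F_N(x)+s\cdot(f_{N+1}^1(x),\dots,f_{N+1}^m(x))\bigr);
\]
each $T_N$ is continuous, so $\Sigma_N=T_N(L\times[0,1])$ is a continuum containing both $G_N$ (fibre $s=0$) and $G_{N+1}$ (fibre $s=1$). Hence $\Sigma_N\cap\Sigma_{N+1}\supseteq G_{N+1}\neq\emptyset$, and the countable union $\bigcup_{N\ge N_0}\Sigma_N$ is a chain of continua joined along common subcontinua, so it is connected. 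Its closure $H_{N_0}:=\overline{\bigcup_{N\ge N_0}\Sigma_N}$ is a continuum in the compact metric space $L\times[0,1]^m$; the $H_{N_0}$ form a decreasing sequence, and by the standard fact that a decreasing intersection of continua in a compact Hausdorff space is a continuum, $H:=\bigcap_{N_0}H_{N_0}$ is a continuum.

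To finish I would show $\overline{GR(F)}=H$, which immediately gives that $\overline{GR(F)}$ is connected. The inclusion $\overline{GR(F)}\subseteq H$ is immediate from $GR(F)\subseteq\bigcup_{M\ge N_0}G_M\subseteq H_{N_0}$ for every $N_0$. For the reverse inclusion, given $(x,t)\in H$ I would diagonalize the sequences witnessing $(x,t)\in H_{N_0}$ for each $N_0$ to produce $(x_j,s_j)\in L\times[0,1]$ and $M_j\to\infty$ with $T_{M_j}(x_j,s_j)\to(x,t)$, and then construct $y_j\in L$ close to $x_j$ for which $F(y_j)$ approximates the strip value $F_{M_j}(x_j)+s_j f_{M_j+1}(x_j)$, so that $(y_j,F(y_j))\to(x,t)$. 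The main obstacle is this last construction: in a given coordinate $i$, the strip value is an interpolation inside the single bump $f_{M_j+1}^i$, and one must perturb $x_j$ inside $L$ so that this intermediate height is realized as an actual value $F(y_j)^i$ without accidentally activating other bumps in the remaining coordinates. Pairwise disjointness keeps $F$ localized (at most one $f_n^i$ contributes at each point), and the continuity of each $f_{M_j+1}^i$ on the connected space $L$ supplies intermediate-value-type behaviour; carrying this out simultaneously in all $m$ coordinates and uniformly in $j$ is the principal technical hurdle.
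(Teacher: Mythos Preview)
The reverse inclusion $H\subseteq\overline{GR(F)}$ that you flag as the ``principal technical hurdle'' is not a hurdle but an obstruction: it is false in general, so the decomposition via the strips $\Sigma_N$ cannot work. Take $L=[0,1]$, $m=2$, let $(f_n^1)_{n\in\N}$ be continuous bumps of height~$1$ with pairwise disjoint supports $I_n\subset(0,1)$ accumulating only at~$0$, and set $f_n^2=(f_n^1)^2$. Then $F^2=(F^1)^2$ pointwise, so the fibre of $\overline{GR(F)}$ over $x=0$ is precisely the parabolic arc $\{(0,t,t^2):t\in[0,1]\}$. Now look at your strip over $I_{N+1}$: since $F_N$ vanishes there, $T_N(x,s)=(x,\,st,\,st^2)$ with $t=f_{N+1}^1(x)$; taking $x$ at the peak of the bump ($t=1$) and $s=\tfrac12$ gives $(x,\tfrac12,\tfrac12)\in\Sigma_N$ with $x\to0$ as $N\to\infty$. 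Hence $(0,\tfrac12,\tfrac12)\in H_{N_0}$ for every $N_0$, so $(0,\tfrac12,\tfrac12)\in H$; yet this point is not on the parabola, so it lies outside $\overline{GR(F)}$. No perturbation $y_j$ of $x_j$ can save this, because the constraint $F^2(y)=(F^1(y))^2$ forces $(F^1(y),F^2(y))$ to remain on the parabola. The defect is structural: a single interpolation parameter $s$ drives all $m$ coordinates in lockstep and so manufactures limit points that the genuine graph of $F$ never approaches.

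The paper's argument avoids exactly this by decoupling the coordinates. Instead of one truncation level it uses $F^{l_1,\dots,l_m}(x)=\bigl(\sum_{n\le l_1}f_n^1(x),\dots,\sum_{n\le l_m}f_n^m(x)\bigr)$ and forms the intersection, over \emph{all} $m$-tuples $(l_1,\dots,l_m)\in\I^m$ of strictly increasing sequences, of the closures $\overline{\bigcup_k GR(F^{l_1(k),\dots,l_m(k)})}$. Each such union is connected because every summand is the graph of a continuous map on the connected space $L$ and all summands pass through a common point; the bulk of the proof is then a direct two-sided comparison showing this intersection equals $\overline{GR(F)}$. The freedom to advance one coordinate while freezing another is precisely what lets one exclude, for each spurious point, a particular tuple of sequences whose associated closure already misses it --- something your single-parameter strips cannot do.
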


\begin{proof} 
Let $X$ be the closure of the graph $GR(F)$ of $F$.
For $l_1, ..., l_{m}\in \N$ define 
 $F^{l_1, ..., l_{m}}: L\rightarrow[0,1]^m$ by
$$F^{l_1,  ...,l_{m}}(x)=(\sum_{n=0}^{l_1} f_n^1(x), ..., \sum_{n=0}^{l_{m}} f_n^m(x)),$$
for every $x\in L$. Now consider the set
$$Y=\bigcap_{(l_1, ..., l_{m_0})\in \I^{m}} \overline{\bigcup_{k\in \N}GR(F^{l_1(k), ..., l_{m}(k)})},$$
where $\I$ stands for the set of all strictly increasing sequences of positive integers.

Note that every sum in the above set is a connected set because the functions
are continuous and there is a point where all of them are zero by compactness of $L$ and
the disjointness of the functions. It follows that the closures of the sums are connected
and so that $Y$ is connected as well. Thus it is enough to prove that $X=Y$.

First prove that $X\subseteq Y$. Let  $y=(x,t_1, ..., t_m)\in  X=\overline{GR(F)}$. 
We will show that
$y\in Y$.  Let $(l_1, ..., l_{m})\in \I^{m}$. As $L$ is metrizable, we can find a sequence
$(y_n)_{n\in\N}$ converging to $y$ such that $y_n\in GR(F)$. 
By \ref{lemmadisjointsums} for each
$n\in\N$ there is $n'\in \N$ such that $y_n\in GR(F^{l, ..., l})$ for all $l>n'$ and hence
there is $k\in \N$ such that $y_n\in GR(F^{l_1(k),....l_{m_0}(k)})$.
Hence all $y_n$s are in all the sums appearing in the definition of $Y$ hence
$y$ is in all of their closures and so is in $Y$.

Now we show that $Y\subseteq X$.
Suppose $y=(x,t_1,..., t_m)\not \in X=\overline {GR(F)}$ and
let $U\times V_1\times ...\times V_m$ be an
open neighbourhood of $y$ disjoint from $GR(F)$. By considering a slightly
smaller set we may assume that $\overline U\times \overline V_1\times ...\times 
\overline V_m$ is  disjoint from $GR(F)$. Moreover assume that
$V_i$ is separated from $0$ if $t_i\not=0$ for $1\leq i\leq m$.

Let $k\in\N$. We will find
 $k'(k)>k$ such that
 for some choice of $l_1(k),..., l_{m}(k)\in \{k+1, k'(k)\}$
 the graph 
$GR(F^{l_1(k),... l_{m}(k)})$
is disjoint from $U\times V_1\times, ...,  V_m$.

As $k$ is arbitrary,  we get that
$\bigcup_{k\in \N} GR(F^{l_1(k),... l_{m_0}(k)})$  is disjoint from
 $U\times V_1\times...\times, ..., \times V_m$ proving that $y$ is not in $Y$.

Note that if all $t_1 ,..., t_m$ were bigger than $0$, hence all
$V_1,...V_m$ separated from $0$, and
$GR(F^{k+1,... k+1})$ intersected $U\times V_1\times, ..., V_m$, then the graph $GR(F)$ intersected it as well,
because $0\leq F^{k+1, ...,  k+1}(x)= F(x)$ if all the coordinates
of $F^{k+1,... k+1}(x)$ are bigger than $0$ by the disjointness of the functions
in the original sequences.

On the other hand if all $t_1, ..., t_m=0$, then $F(x)=(s_1,..., s_m)\not=(0,...0)$, so
there is $1\leq i\leq m$ with $s_i\not\in V_i$. Hence there is 
 $k'>k$ such that $\sum_{n=1}^{k'} f_n^{i}(x)=s_i$
and so  $GR(F^{k',... k'})$ is disjoint from
$U\times V_1\times...\times V_{j_0}
\times V_{j_0+1}\times... \times V_m$.

So, some of the values of $t_1 ,..., t_m$ are $0$ and some are not.
We will continue the proof under the
assumption that there is $1\leq m_0\leq m$ be such that $t_1= ... =t_{m_0}=0$ and
$t_{m_0+1}, ..., t_{m}>0$. This of course can easily be transformed into the
general case with a different configuration.

Recall that we have  that $0\not\in\overline V_{m_0+1}, ...,
\overline V_m$. Let $F_{m_0}^{k+1}: L\times [0,1]^{m-m_0}$ be defined by
$$F_{m_0}^{k+1}(x) =(\sum_{n=1}^{k+1} f_n^{m_0+1}(x),..., \sum_{n=1}^{k+1} f_n^m(x))$$
Consider 
$$E=(F_{m_0}^{k+1})^{-1}[\overline V_{m_0+1}\times...\times \overline V_m]\cap U$$
If $E$ is empty then 
$(F^{k+1,..., k+1})^{-1}[\overline V_{1}, ..., \overline V_m]\cap U$ is empty what is required.
If $E$ is nonempty, consider
$$\mathcal U=\{ (f^1_n)^{-1}[[0,1]\setminus\overline V_1], ...,
 (f^{m_0}_n)^{-1}[[0,1]\setminus\overline V_{m_0}]:
n\in \N\}.$$
$\bigcup\mathcal U$ must include $E$ because otherwise there would be
$x\in U$ with $F(x)\in  V_1\times... \times V_{m_0}\times\overline V_{m_0+1}... \times \overline
V_{m}$ which would contradict the choice of $U\times V_1\times... \times V_m$. 
But $E$ is compact, so there is a finite $\mathcal U'\subseteq \mathcal U$ which covers $E$. Let
$k'>k$ be such that $E\subseteq \bigcup \{ (f^1_n)^{-1}[[0,1]\setminus\overline V_1], ..., (f^{m_0}_n)^{-1}[[0,1]\setminus\overline V_{m_0}]: 1\leq n\leq k'\}.$
This means that the graph 
$F^{k',...,k', k+1,...k+1}$ is disjoint from $U\times V_1\times...\times V_{m_0}
\times\overline V_{m_0+1}\times... \times \overline V_m$ as required.

\end{proof}

\begin{definition}\label{definitionD}
Suppose that  $L$ is a compact space and  $f_n:L\rightarrow [0,1]$ are
pairwise disjoint continuous functions for $n\in A\subseteq \N$. Then
$$D((f_n)_{n\in A})=\bigcup\{U: U\ \hbox{\rm is open and}
\  \{n\in A: supp(f_n)\cap U\not=\emptyset\}\ \hbox{\rm is finite}\}$$
Suppose that    $f_n^i:L\rightarrow [0,1]$ for each $n\in A\subseteq \N$ are
pairwise disjoint continuous functions for each $i\in B\subseteq \N$. Define $D((f_n^i)_{n\in A, i\in B})=
\bigcap_{i\in B}D((f_n)_{n\in A})$.
\end{definition}

\begin{lemma}\label{lemmaD} Let $L$ be a compact space. Suppose that
for each $i\in\N$ the sequence $(f_n^i)_{n\in \N}$ consists of pairwise disjoint
 continuous functions from $L$ into $[0,1]$.
\begin{enumerate}
\item  $D((f_n^i)_{i,n\in\N})$ is dense in $L$
\item For each $i\in \N$ the function $\sum_{n\in N}f_n^i$ is continuous on $D((f_n^i)_{n\in\N})$.
\end{enumerate}
\end{lemma}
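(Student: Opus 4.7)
The plan is to reduce (1) to the claim that, for each fixed $i\in\N$, $D((f_n^i)_{n\in\N})$ is a dense open subset of $L$, and then to conclude by the Baire category theorem (which applies since $L$ is a compact Hausdorff space). Part (2) will then be a purely local observation once one knows what membership in $D$ means.

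Openness of $D((f_n^i)_{n\in\N})$ is immediate from the definition, since it is a union of open sets. For density, I would fix $i$ (and suppress the superscript), let $W\subseteq L$ be any nonempty open set, and split into cases depending on whether some $f_n$ is nonzero somewhere on $W$. If $f_n\equiv 0$ on $W$ for every $n$, then $W$ and each open set $\{f_n>0\}$ are disjoint open sets, and hence $W$ is also disjoint from $\overline{\{f_n>0\}}=supp(f_n)$ for every $n$ (two disjoint open sets remain disjoint from each other's closures), so $W\subseteq D((f_n)_{n\in\N})$. Otherwise, pick $x_0\in W$ and $n_1\in\N$ with $f_{n_1}(x_0)>0$, and set $U=W\cap\{f_{n_1}>0\}$, a nonempty open subset of $W$. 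The pairwise disjointness identity $f_n\cdot f_{n_1}=0$ forces $f_n\equiv 0$ on $U$ for every $n\neq n_1$, and the same open-versus-closure argument yields $supp(f_n)\cap U=\emptyset$ for all $n\neq n_1$. Thus only $f_{n_1}$ can have support meeting $U$, so $\emptyset\neq U\subseteq D((f_n)_{n\in\N})\cap W$.

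With each $D((f_n^i)_{n\in\N})$ dense and open, the Baire category theorem gives that $D((f_n^i)_{i,n\in\N})=\bigcap_{i\in\N}D((f_n^i)_{n\in\N})$ is dense, which is (1). For (2), fix $i\in\N$ and $x_0\in D((f_n^i)_{n\in\N})$; by the definition of $D$ there is an open neighborhood $U$ of $x_0$ such that only finitely many indices $n_1,\ldots,n_k$ satisfy $supp(f_{n_j}^i)\cap U\neq\emptyset$. For every other $n$ the function $f_n^i$ vanishes on $U$, so on $U$ the pointwise sum $\sum_{n\in\N}f_n^i$ coincides with the finite continuous sum $\sum_{j=1}^k f_{n_j}^i$, and continuity at $x_0$ follows.

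There is no real obstacle in this argument; the only slightly delicate point is the elementary topological observation that two disjoint open sets are each disjoint from the closure of the other, which is invoked to convert the pointwise statement ``$f_n$ vanishes on an open set'' into the set-theoretic statement ``$supp(f_n)$ misses that set.''
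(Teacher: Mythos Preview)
Your proof is correct and follows essentially the same approach as the paper: show each $D((f_n^i)_{n\in\N})$ is dense open and invoke the Baire category theorem, then observe that the infinite sum is locally a finite sum. The only difference is that the paper cites \cite{few} for the density of each $D((f_n^i)_{n\in\N})$, whereas you supply the direct argument.
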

\begin{proof}
As shown in \cite{few} $D((f_n)_{n\in \N})$ is always a dense and open subset of $K$.
So $D((f_n^i)_{i,n\in\N})$ is dense by the Baire category theorem for
compact Hausdorff spaces. Note that on $D((f_n^i)_{n\in\N})$  the infinite sum $\sum_{n\in N}f^i_n$ is
equal to some finite subsum, and so, is continuous.
\end{proof}

\begin{definition}\label{strongextension}[4.2.\cite{few}]
Suppose that
$L$ is a compact space, $M\subseteq L\times[0,1]^\N$
and for each $i\in B\subseteq\N$ the sequence $(f_n^i)_{n\in A}$ consists of pairwise disjoint
 continuous functions from $L$ into $[0,1]$ for some $A,B\subseteq \N$.
We say that $M$ is an extension of $L$ by 
$(f_n^i)_{n\in \N, i\in B}$ if and only if
 $M$ is the closure of the graph of 
the restriction 
$$\prod\{\sum_{n\in A}f_n^i:i\in B\}|D((f_n^i)_{n\in A, i\in B}).$$
Moreover we  say that $M$ as above is 
a strong extension of $L$ by $(f_n^i)_{n\in A, i\in B}$ if and only
if the graph of $\prod\{\sum_{n\in A}f_n^i:i\in B\}$ is a subset of $M$.
\end{definition}

\begin{lemma}\label{strongconnected}
Suppose that $L$ is a compact and connected space and $M$ is a
strong extension of $L$, then $M$ is compact and connected as well.
\end{lemma}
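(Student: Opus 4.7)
The plan divides naturally into compactness and connectedness. For compactness, $M$ is by definition a closed subset of the compact product $L \times [0,1]^{\N}$, so $M$ is compact. This observation, combined with the strong extension hypothesis $GR(F) \subseteq M$ for $F = \prod_{i\in B}\sum_{n\in A} f_n^i$, also yields $M = \overline{GR(F)}$: the closedness of $M$ gives $\overline{GR(F)} \subseteq M$, while $GR(F|_{D((f_n^i))}) \subseteq GR(F)$ together with the definition of $M$ gives the reverse inclusion.

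For connectedness, I would first reduce to finitely many coordinates. Since $M$ is closed in the product topology on $L \times [0,1]^B$, any point outside $M$ has a basic open neighborhood determined by only finitely many coordinates, so $M$ coincides with the inverse limit of its projections $\pi_{B'}(M)$ over finite $B' \subseteq B$ ordered by inclusion. A brief check --- using that $\pi_{B'}(M)$ is closed as the continuous image of compact $M$, and that it contains $\pi_{B'}(GR(F)) = GR(F^{B'})$ --- identifies $\pi_{B'}(M) = \overline{GR(F^{B'})}$ for $F^{B'} = \prod_{i\in B'}\sum_{n\in A} f_n^i$. Since the inverse limit of a directed system of nonempty compact connected Hausdorff spaces is connected, it suffices to show each $\pi_{B'}(M)$ is connected for finite $B'$.

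For fixed finite $B'$, the connectedness of $\overline{GR(F^{B'})}$ is exactly the content of Lemma \ref{connectedgraph}, except that here $L$ need not be metrizable. Metrizability is used in just one step of that proof, namely the extraction of a convergent sequence in $GR(F^{B'})$ approaching a given point of the closure; this is harmlessly replaced by a convergent net $(x_\alpha, F^{B'}(x_\alpha))$. Since Lemma \ref{lemmadisjointsums} is a pointwise statement and applies to each $x_\alpha$ separately, one obtains, for every tuple of strictly increasing sequences $(l_i)_{i\in B'}$ and every $\alpha$, an index $k(\alpha)$ such that $(x_\alpha, F^{B'}(x_\alpha))$ lies in the graph of the continuous partial-sum map $x \mapsto \bigl(\sum_{n\leq l_i(k(\alpha))} f_n^i(x)\bigr)_{i\in B'}$. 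The remaining parts of the proof of \ref{connectedgraph} --- constructing a separating truncation tuple for a point outside the closure, and showing connectedness of each union of truncated graphs via a common zero of the finitely many functions involved, guaranteed by compactness and pairwise disjointness --- use no metrizability and transfer verbatim.

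The main technical point is the net-based adaptation of Lemma \ref{connectedgraph}; this is routine once one notes that the disjoint-sums lemma acts pointwise. Combined with the inverse limit reduction to handle a possibly infinite $B$, this yields the full statement.
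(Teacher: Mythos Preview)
Your proof is correct and shares the paper's core idea: establish $M=\overline{GR(F)}$ from the strong-extension hypothesis and then invoke Lemma~\ref{connectedgraph}. The paper's own proof is a single sentence doing exactly this. You go further in two respects. First, you handle a possibly infinite index set $B$ by an inverse-limit reduction to finite $B'\subseteq B$, checking that $\pi_{B'}(M)=\overline{GR(F^{B'})}$; this is genuinely needed, since Lemma~\ref{connectedgraph} is stated only for finitely many sequences and the paper later applies Lemma~\ref{strongconnected} with $B$ countably infinite. Second, you observe that the metrizability hypothesis in Lemma~\ref{connectedgraph} can be removed by replacing the convergent sequence with a net; this is sound, though in the paper's applications $L=\nabla\mathcal F_s$ with $\mathcal F_s$ countable, hence metrizable, so this refinement is not strictly required. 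In short: same approach, carried out with more care about the reductions the paper leaves implicit.
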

\begin{proof}
It follows from \ref{connectedgraph} and \ref{strongextension} since
the graph of $\prod\{\sum_{n\in A}f_n^i:i\in B\}$ must be dense in $M$
if it  is a subset of $M$.
\end{proof}

\begin{lemma}\label{strongfinite}
 Suppose that $L$ is compact and metrizable.
Suppose that for each $i\in\N$ the sequence $(f_n^i)_{n\in \N}$ consists of pairwise disjoint
 continuous functions from $L$ into $[0,1]$.
Suppose that for each finite $a\subseteq\N$ the extension of
$L$ by $(f_n^i)_{n\in \N, i\in a}$ is strong. Then, the extension of $L$ by 
$(f_n^i)_{n,i\in \N}$ is strong as well.
\end{lemma}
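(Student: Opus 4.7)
The plan is a diagonal approximation argument, exploiting the metrizability of $L$ together with the metrizability of $[0,1]^\N$. Write $F_B\colon L\to[0,1]^B$ for $F_B(x)=\bigl(\sum_{n\in\N} f_n^i(x)\bigr)_{i\in B}$, and abbreviate $D_B=D((f_n^i)_{n\in\N,\,i\in B})$. I must show that $GR(F_\N)\subseteq \overline{GR(F_\N|D_\N)}$ inside $L\times[0,1]^\N$.

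The key preliminary step is to show that for every finite $a\subseteq\N$,
$$\overline{GR(F_a|D_a)}\;=\;\overline{GR(F_a|D_\N)}.$$
Indeed, by \ref{lemmaD} each $D((f_n^i)_{n\in\N})$ is open and dense in $L$, so their countable intersection $D_\N$ is dense in $L$ by the Baire category theorem; consequently $D_\N\cap D_a$ is dense in the open set $D_a$, and since $F_a$ is continuous on $D_a$ (again by \ref{lemmaD}), the graph $GR(F_a|D_\N)$ is dense in $GR(F_a|D_a)$. Combined with the hypothesis that the extension by $(f_n^i)_{n\in\N,\,i\in a}$ is strong (i.e.\ $(x,F_a(x))\in \overline{GR(F_a|D_a)}$), this yields $(x,F_a(x))\in \overline{GR(F_a|D_\N)}$ for every $x\in L$.

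Once this is in hand the main argument is quick. Fix $x\in L$ and a compatible metric $d$ on $L$. For each $k\in\N$ I apply the previous paragraph with $a=\{0,1,\dots,k\}$ to produce $y_k\in D_\N$ satisfying $d(y_k,x)<1/k$ and $|\sum_{n\in\N} f_n^i(y_k)-\sum_{n\in\N} f_n^i(x)|<1/k$ for every $i\leq k$. Then $y_k\to x$ in $L$ and $F_\N(y_k)\to F_\N(x)$ coordinatewise, i.e.\ in the product topology on $[0,1]^\N$. Hence $(x,F_\N(x))$ lies in $\overline{GR(F_\N|D_\N)}$, which is precisely the extension of $L$ by $(f_n^i)_{n,i\in\N}$, so this extension is strong.

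The only genuine subtlety is that $D_\N$ may be strictly smaller than any $D_a$, so the finite-$a$ hypothesis does not directly produce approximating points in $D_\N$; this is exactly what the density-plus-continuity observation in the second paragraph repairs. Everything else is just taking advantage of the coordinatewise nature of convergence in $[0,1]^\N$, which is what allows a single diagonal sequence to handle all coordinates at once.
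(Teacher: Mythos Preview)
Your proof is correct and follows essentially the same route as the paper. Both arguments hinge on the identical observation that $GR(F_a|D_\N)$ is dense in $GR(F_a|D_a)$ for finite $a$, proved the same way: $D_\N$ is dense in $L$ by Baire category and hence dense in the open set $D_a$, while $F_a$ is continuous on $D_a$ by \ref{lemmaD}(2). The only difference is packaging: the paper argues by contradiction via a basic open neighbourhood (which automatically depends on finitely many coordinates), whereas you build an explicit diagonal sequence; these are just the contrapositive and direct forms of the same argument.
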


\begin{proof} Let $D=D((f_n^i)_{n,i\in \N})$.
Suppose that the extension of $L$ by 
$(f_n^i)_{n,i\in \N}$ is not strong and take a point 
$$y=(x,t_1,...)\in GR(\prod\{\sum_{n\in N}f_n^i:i\in \N\})$$
which is not in the closure of 
$$GR(F|D)=GR(\prod\{\sum_{n\in N}f_n^i\restriction D((f_n^i)_{n,i\in \N}):i\in \N\}).$$
Let $U$ be an open neighbourhood of $y$ disjoint from $GR(F|D)$. 
We may assume that $U$ is open basic, so there is
 a finite $a\subseteq \N$ which determines $U$, so $U$ must be disjoint from the projection
of $GR(F|D)$ on theses coordinates, in other words, assuming (which can be done without
loss of generality) that $a=\{1, ..., k\}$ for some $k\in\N$ we get 
$$(x,t_1, ..., t_k)\in GR_k(F)=GR(\prod\{\sum_{n\in N}f_n^j:j\leq k\})$$
and its open neighbourhood $U\subseteq L\times [0,1]^k$  such that
$U$ is disjoint from 
$$GR_k(F|D)=GR(\prod\{\sum_{n\in N}f_n^j\restriction D((f_n^i)_{n,i\in \N}):j\leq k\}).$$

So, to obtain a contradiction with the hypothesis, it is enough to prove that 
$$GR_k(f|D_k)=GR(\prod\{\sum_{n\in N}f_n^i\restriction D((f_n^i)_{n\in \N, i\leq k}):i\leq k\})
\subseteq \overline{GR_k(f|D)},$$
since the left-hand side is dense in $GR_k(F)$ by the assumption of the lemma that finite-dimensional extensions
are strong.
So take any  $(x',t_1', ..., t_k')\in GR_k(f|D_k)$ and 
a neighbourhood $U'\subseteq D_k=D((f_n^i)_{n\in \N, i\leq k})$ of $x'$ and
 neighbourhoods $V_i$ of $t_i'$ for $1\leq i\leq k$. 
All functions $\sum_{n\in \N}f^i_n$ for $1\leq i\leq k$ are continuous in $D_k$
by (2) of \ref{lemmaD}, so we can find $U''\subseteq U'$ such that 
$$x'\in U''\subseteq \bigcap_{1\leq i\leq k} (\sum_{n\in \N}f^i_n)^{-1}[V_i].$$
Now, by (1) of \ref{lemmaD}, take any $y\in D\cap U''$, we have $(y,s_1, ..., s_k)\in GR_k(F|D)$
for some $(s_1, ..., s_k)$ and $(y,s_1, ..., s_k)\in U\times V_1\times ...\times V_k$,
so there is no neighbourhood which can separate $(x',t_1', ..., t_k')$ from
${GR_k(f|D)}$ and so $GR_k(f|D_k)\subseteq  \overline{GR_k(f|D)}$ as required.
\end{proof}

\begin{lemma}\label{strongalmostfinite}
Suppose that $L$ is compact and metrizable.
Suppose that for each $i\leq m $ the sequence $(f_n^i)_{n\in \N}$ consists of pairwise disjoint
 continuous functions from $L$ into $[0,1]$.
Then there is and infinite $A\subseteq \N$ such that for every infinite
$A'$ almost included in $A$ the extension
of $L$ by $(f_n^i)_{n\in A', i\leq m}$ is a strong extension.
\end{lemma}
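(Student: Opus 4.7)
The plan is to extract a Hausdorff-convergent subsequence of supports and then refine by a fusion argument. Since $L$ is compact metrizable, so is the $m$-fold product of the hyperspace $2^L$ of non-empty closed subsets of $L$ endowed with the Hausdorff metric; the sequence $n\mapsto ({\rm supp}(f_n^1),\dots,{\rm supp}(f_n^m))$ will then admit an infinite $A_0\subseteq \N$ along which it converges to some $(S_1,\dots,S_m)$. A direct argument, using pairwise disjointness within each family, shows $L\setminus D((f_n^i)_{n\in A',i\leq m})=\bigcup_iS_i$ for every infinite $A'\subseteq A_0$, so the ``bad'' set is the same for every cofinite $A'\subseteq A$ that will be built. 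I would then refine $A_0$ to an infinite $A\subseteq A_0$ with $B:=A_0\setminus A$ infinite and still Hausdorff-converging to $(S_i)_i$, and additionally demand, by a fusion indexed by triples $(U,\epsilon,I)$ with $U$ ranging over a countable base for $L$, $\epsilon>0$ rational, and $I\subseteq\{1,\dots,m\}$, that whenever $U\cap\bigcap_{i\in I}S_i\neq\emptyset$ one can find arbitrarily large tuples $(n_i)_{i\in I}\in B^I$ with $U\cap\bigcap_{i\in I}\{f_{n_i}^i>0\}\neq\emptyset$.

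To verify strongness for any cofinite $A'\subseteq A$, write $D=D((f_n^i)_{n\in A',i\leq m})$. On $D$ the function $F_{A'}=\prod_i\sum_{n\in A'}f_n^i$ is locally a finite sum and continuous by \ref{lemmaD}(2), so strongness is automatic there. For $x\in\bigcup_iS_i$ with $I(x)=\{i:x\in S_i\}$, I would first verify $F_{A'}^i(x)=0$ for $i\in I(x)$: otherwise the open set $\{f_{n_0}^i>0\}$ containing $x$ would be disjoint from every other ${\rm supp}(f_n^i)$ by pairwise disjointness within the $i$-th family, and hence $x\in D^i_{A'}$, contradicting $x\in S_i$. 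For $i\notin I(x)$ the function $F_{A'}^i$ is continuous at $x$ since $x\in D^i_{A'}$. Applying the fusion with $(U,\epsilon,I)=(B(x,1/k),1/k,I(x))$ produces points $y_k\in B(x,1/k)\cap\bigcap_{i\in I(x)}\{f_{n^k_i}^i>0\}$ for some $n^k_i\in B\subseteq\N\setminus A'$; pairwise disjointness again forces each such open set to be disjoint from every ${\rm supp}(f_n^i)$ with $n\neq n^k_i$ in $A'$, so $y_k\in D^i_{A'}$ and $F_{A'}^i(y_k)=0$ for each $i\in I(x)$. Shrinking the neighbourhood of $y_k$ further to avoid the finitely many ${\rm supp}(f_n^j)$ near $x$ with $j\notin I(x)$ places $y_k\in D$, and continuity of $F_{A'}^j$ near $x$ for $j\notin I(x)$ yields $F_{A'}(y_k)\to F_{A'}(x)$, as needed.

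The main obstacle is the fusion step itself: arranging a single $A\subseteq A_0$ so that the open intersections $\bigcap_{i\in I(x)}\{f_{n_i}^i>0\}$ arbitrarily close to each $x\in\bigcup_iS_i$ can be realised by indices drawn from $B$. Naive periodic splits can fail -- for instance with $L=\{0\}\cup\{1/n:n\geq 1\}$, $f_n^1=\chi_{\{1/n\}}$ and $f_n^2=\chi_{\{1/(n+1)\}}$, one needs $B$ to contain arbitrarily large consecutive pairs $(n,n-1)$, which fails when $A$ is the evens or the odds but succeeds for, e.g., $A=\{0,3,6,\dots\}$ -- so a careful diagonal bookkeeping over the countable family of triples $(U,\epsilon,I)$ is the technical heart of the argument.
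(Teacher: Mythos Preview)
Your fusion step cannot in general be carried out, and this is a genuine gap rather than a technicality. Take $L=[0,1]^2$, $m=2$, and let $f_n^1$, $f_n^2$ be bump functions supported in balls of radius $1/(4n)$ about $(1/n,0)$ and $(0,1/n)$ respectively. Each family is pairwise disjoint, the Hausdorff limits are $S_1=S_2=\{(0,0)\}$, yet $\{f_{n_1}^1>0\}\cap\{f_{n_2}^2>0\}=\emptyset$ for \emph{all} $n_1,n_2$. Thus for $I=\{1,2\}$ and any neighbourhood $U$ of the origin, your demand $U\cap\bigcap_{i\in I}\{f_{n_i}^i>0\}\neq\emptyset$ is unsatisfiable no matter how $B$ is chosen. (The lemma's conclusion nonetheless holds in this example: the diagonal points $y_k=(1/k,1/k)$ lie in $D$, satisfy $F_{A'}(y_k)=(0,0)$, and converge to the origin.) Your example with $L=\{0\}\cup\{1/n\}$ illustrates an obstruction of a different kind and does not expose this failure mode, because there the positivity sets for the two families do overlap.

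The paper's argument avoids this entirely by never attempting to place the approximating point in a prescribed intersection of positivity sets across different $i$'s. Instead it works directly from the density of $D$ (Lemma~\ref{lemmaD}(1)): at stage $k$ one fixes a finite cover $\mathcal U_k$ of $L$ by sets of diameter $<1/k$ on which every partial sum $\sum_{n\in a}f_n^i$ with $a\subseteq[0,n_k]$ oscillates by $<1/k$, picks $y_U\in U\cap D$ for each $U\in\mathcal U_k$, and only then chooses $n_{k+1}$ large enough that $f_n^i(y_U)=0$ for all $n\geq n_{k+1}$, all $i$, and all $U$. Given any $x$ and the cover element $U\ni x$, the point $y_U$ then serves as the approximant: the head of the sum is close by the cover property, the tail vanishes at $y_U$ by construction, and the gap $(n_k,n_{k+1})$ is skipped by $A'$. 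The essential idea you are missing is to pick the approximating points \emph{before} refining $A$, relying only on the density of $D$, rather than trying to manufacture them afterward inside specific positivity sets.
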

\begin{proof} Let $d$ be a metric on $L$ compatible with its topology.
We construct $A=\{n_k:k\in\N\}$ so that $n_k<n_{k+1}$
for all $k\in\N$ and  whenever $A'\subseteq \N$ is disjoint with the 
intervals $(n_k, n_{k+1})$ for almost all $k\in\N$, then for every $\varepsilon>0$,
for every $x\in L$ there is 
$$y\in D=\bigcap_{i\leq m}D((f_n^i)_{n\in A', i\leq m})$$
such that $d(x,y)<\varepsilon$ and
$|\sum_{n\in \N}f^i_n(x)-\sum_{n\in \N}f^i_n(y)|<\varepsilon$ for each $i\leq m$.

We construct $n_k$s by induction. Suppose we have constructed $n_1< ...< n_k$.
Using the compactness and the continuity of the functions $\sum_{n\in a}f^i_n$ for
$i\leq m$ and $a\subseteq [0,n_k]$  find a finite open cover $\mathcal U_k$ of $L$ consisting of sets $U$
such that 
$$diam(\sum_{n\in a}f^i_n[U])<1/k, \ \ diam(U)<1/k, \leqno (1)$$
for every $i\leq m$ and every $a\subseteq [0,n_k]$.
Pick $y_U\in U\cap D$ for every $U\in\mathcal U_k$ and define $n_{k+1}$ to be such 
a natural number bigger than $n_k$ that 
$$ \forall U\in \mathcal U_k\  \forall i\leq m\  \forall n\geq n_{k+1}\ \ f_n^i(y_U)=0.\leqno (2)$$
 It can be found because of the disjointness of
the functions. This completes the inductive construction.

Now take any $A'\subseteq \N$ disjoint with the 
intervals $(n_k, n_{k+1})$ for almost all $k\in\N$, fix $\varepsilon>0$
and $x\in L$.  Let $k$ be big enough so that $(n_k, n_{k+1})$ is disjoint from
$A'$, $1/k<\varepsilon$ and if $\sum_{n\in A'}f^i_n(x)>0$, then $f^i_n(x)>0$ for
some $n\leq n_k$ and $n\in A'$.  In particular
$$\forall i\leq m\ \ \sum_{n\in A'}f^i_n(x)=\sum_{n\in a}f^i_n(x),$$
where $a=A'\cap[0, n_k]$. Find $U\in \mathcal U_k$ such that $x\in U$ and
$y_U$ from the construction. In particular $d(y_U,x)<\varepsilon$. For all $i\leq m$ we have 
$$ \sum_{n\in A'}f^i_n(y_U)=\sum_{n\in a}f^i_n(y_U)+
\sum_{n\in A'\cap(n_k, n_{k+1})}f^i_n(y_U)+\sum_{n\in A'\cap [n_{k+1},\infty)}f^i_n(y_U).$$
But the two last terms are zero by the choice of $A'$ and by the choice (2) of
$n_{k+1}$ in the inductive construction, so 
for all $i\leq m$ we have  $\sum_{n\in A'}f^i_n(y_U)=\sum_{n\in a}f^i_n(y_U)$ and hence
$|\sum_{n\in A'}f^i_n(x)-\sum_{n\in A'}f^i_n(y_U)|<1/k<\varepsilon$ for each $i\leq m$
by (1).

\end{proof}

\begin{lemma}\label{strongalmostinfinite}
Suppose that $L$ is compact and metrizable.
Suppose that for each $i\in\N$ the sequence $(f_n^i)_{n\in \N}$ consists of  pairwise disjoint
 continuous functions from $L$ into $[0,1]$.
Then there is and infinite $A\subseteq \N$ such that for every infinite
$A'$ almost included in $A$ the extension
of $L$ by $(f_n^i)_{n\in A', i\in \N}$ is a strong extension.
\end{lemma}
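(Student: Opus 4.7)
The plan is to reduce the infinite-$i$ case to Lemma \ref{strongalmostfinite} by a diagonal construction and then apply Lemma \ref{strongfinite} to lift the conclusion to infinitely many sequences. The main technical point is the bookkeeping in the diagonalization: at each inductive step I must ensure that the set produced remains infinite, and this is achieved by relabeling rather than by intersection.

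First I construct a $\subseteq^{*}$-decreasing chain of infinite sets $A_{0}\supseteq A_{1}\supseteq\cdots$ in $\N$ such that, for every $m\in\N$ and every infinite $A'\subseteq^{*}A_{m}$, the extension of $L$ by $(f_{n}^{i})_{n\in A',\, i\leq m}$ is strong. To pass from $A_{m}$ to $A_{m+1}$, I enumerate $A_{m}=\{b_{0}<b_{1}<\cdots\}$, define $g_{n}^{i}:=f_{b_{n}}^{i}$ for $i\leq m+1$, and apply Lemma \ref{strongalmostfinite} to the (still pairwise disjoint) sequences $(g_{n}^{i})_{n\in\N,\, i\leq m+1}$; this yields an infinite $C\subseteq\N$, and I set $A_{m+1}:=\{b_{n}:n\in C\}\subseteq A_{m}$. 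Any infinite $A'\subseteq^{*}A_{m+1}$ has the form $\{b_{n}:n\in C'\}$ for some infinite $C'\subseteq^{*}C$, giving a strong extension by $(g_{n}^{i})_{n\in C',\, i\leq m+1}$, that is, by $(f_{n}^{i})_{n\in A',\, i\leq m+1}$. Relabeling inside $A_{m}$, rather than intersecting a freshly produced subset of $\N$ with $A_{m}$, is precisely what keeps $A_{m+1}$ infinite; this is the only delicate point in the whole argument.

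Next, I pick inductively $a_{m}\in A_{m}$ with $a_{m}>a_{m-1}$ and set $A:=\{a_{m}:m\in\N\}$. Since $a_{k}\in A_{k}\subseteq A_{m}$ for all $k\geq m$, we have $A\subseteq^{*}A_{m}$ for every $m$, hence every infinite $A'\subseteq^{*}A$ is almost included in every $A_{m}$. By construction, the extension of $L$ by $(f_{n}^{i})_{n\in A',\, i\leq m}$ is then strong for every $m$. For an arbitrary finite $a\subseteq\N$, taking $M=\max(a)$ and projecting the graph onto the $a$-coordinates is continuous and only enlarges the defect set $D(\,\cdot\,)$ of Definition \ref{definitionD}, so strong extension on the $i$-index $\{0,\dots,M\}$ passes to strong extension on $a$.

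Finally, I identify $A'$ with $\N$ via the order-preserving bijection; this transports the pairwise disjoint sequences to sequences indexed by $(n,i)\in\N\times\N$ all of whose finite-$i$-extensions are strong. Lemma \ref{strongfinite} then yields that the total extension is strong, which, transported back, is exactly the statement that the extension of $L$ by $(f_{n}^{i})_{n\in A',\, i\in\N}$ is strong, as required.
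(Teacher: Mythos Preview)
Your proof is correct and follows essentially the same approach as the paper: build an almost-decreasing chain of infinite sets via Lemma~\ref{strongalmostfinite}, diagonalize, and then invoke Lemma~\ref{strongfinite}. The only differences are cosmetic---you handle initial segments $\{0,\dots,m\}$ and project to arbitrary finite $a$, whereas the paper enumerates all finite $a_k$ directly; and you spell out the relabeling step needed to keep $A_{m+1}\subseteq A_m$ infinite, which the paper leaves implicit.
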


\begin{proof}
Enumerate  all finite subsets of $\N$ as $\{a_k:k\in \N\}$. By induction
construct almost decreasing sequence $(A_k)_{k\in \N}$  of infinite subsets
of $\N$ such that $A_{k+1}$ is almost included in $A_k$ for each $k\in\N$
and $A_k$ satisfies \ref{strongalmostfinite} for $a_k$ instead of $\{1, ...m\}$,
that is, for every $A'$ almost included in $A_k$ the extension of
$L$ by $(\sum_{n\in A'}f^i_n)_{n\in\N, i\in a_k}$ is strong. Now let $A$
be the diagonalization of $A_k$s, that is, an infinite $A\subseteq \N$ which is almost
included in $A_k$ for each $k\in\N$. It is clear that if $A'$ is almost included 
in $A$, then it is almost included in each $A_k$ for $k\in \N$. Now apply lemma
\ref{strongfinite} to conclude that the extension of $L$  by $(f_n^i)_{n\in A', i\in \N}$
is strong.

\end{proof}

\begin{lemma}\label{preservingpromisses}
Suppose that $L$ is compact and metrizable.
Suppose that for each $i\in\N$ the sequence $(f_n^i)_{n\in \N}$ consists of pairwise disjoint
 continuous functions from $L$ into $[0,1]$.
Let $x_n, y_n\in L$ for $n\in\N$ be such that
$$\overline{\{x_n:n\in\N\}}\cap \overline{\{y_n:n\in\N\}}\not=\emptyset.$$
Then there is and infinite $A\subseteq \N$ such that for every infinite
$A'$ almost included in $A$ we have 
$$\overline{\{x_n':n\in\N\}}\cap \overline{\{y_n':n\in\N\}}\not=\emptyset,$$
where $x_n'$ is the point of $GR(\Pi_{i\in \N}\sum_{n\in A'}f^i_n)$ whose $L$-coordinate is
$x_n$ and $y_n'$ is the point of $GR(\Pi_{i\in \N}\sum_{n\in A'}f^i_n)$ whose $L$-coordinate is
$y_n$.
\end{lemma}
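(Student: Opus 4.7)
The plan is to choose $A$ and subsequences in such a way that, for every infinite $A' \subseteq^{*} A$, a common accumulation point of the lifts $\{x'_n\}$ and $\{y'_n\}$ can be exhibited explicitly in $L \times [0,1]^{\N}$. The key structural tool is pairwise disjointness of $(f^i_m)_m$, which pins down, for each $i$ and each $x \in L$, the (at most one) index $m$ with $f^i_m(x) > 0$; a diagonal construction of $A$ then forces the relevant ``stray'' indices for the points $x_n, y_n$ to fall in the gaps of $A$.

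First, by metrizability of $L$, fix $z \in \overline{\{x_n\}} \cap \overline{\{y_n\}}$ and pass to subsequences so that $x_n \to z$ and $y_n \to z$. For each $i$ let $k_i^*$ denote the unique $m$ (if any) with $f^i_m(z) > 0$, and let $K^x(i, n)$ and $K^y(i, n)$ denote the (at most one) indices $m$ with $f^i_m(x_n) > 0$ or $f^i_m(y_n) > 0$ respectively. For $i$ where $k_i^*$ is defined, continuity forces $K^x(i,n) = K^y(i,n) = k_i^*$ for all large $n$. For the exceptional set $E := \{i : k_i^* \text{ undefined}\}$, a standard diagonal thinning of $(x_n), (y_n)$ lets me assume that each of $(K^x(i, n))_n, (K^y(i, n))_n$ is either eventually undefined, eventually constant at some $m$ with $f^i_m(z) = 0$ (so $f^i_m(x_n) \to 0$ by continuity), or strictly increases to infinity.

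Next, enumerate the countably many sequences of the third type and build $(n_j)$ together with integers $a_1 < a_2 < \cdots$ by induction: at stage $j$, pick $n_j > n_{j-1}$ with $d(x_{n_j}, z), d(y_{n_j}, z) < 1/j$ and such that every listed sequence with rank $\leq j$ evaluates at $n_j$ to a value exceeding $a_{j-1}$ (possible since those sequences tend to infinity); then choose $a_j$ strictly larger than all these values and than $a_{j-1}$. Set $A := \{a_j : j \in \N\}$. By construction every stray value arising from a strictly-increasing-to-infinity case lies in some open interval $(a_{j-1}, a_j)$, so $A$ is disjoint from all of them.

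Given an infinite $A' \subseteq^{*} A$, let $F_0 := A' \setminus A$ (finite) and define $d \in [0,1]^{\N}$ by $d_i := f^i_{k_i^*}(z)$ if $k_i^* \in A'$, and $d_i := 0$ otherwise. I would verify that $(z, d)$ is an accumulation point of both $\{x'_n\}$ and $\{y'_n\}$: for any $\varepsilon > 0$ and finite $F \subseteq \N$, choose $j$ so large that $d(x_{n_j}, z), d(y_{n_j}, z) < \varepsilon$; that for $i \in F \setminus E$ we have $K^x(i, n_j) = k_i^*$ and $|f^i_{k_i^*}(x_{n_j}) - f^i_{k_i^*}(z)| < \varepsilon$; that for $i \in F \cap E$ in the eventually-constant case the coordinate $f^i_{m_0}(x_{n_j})$ is $\leq \varepsilon$; and that for $i \in F \cap E$ in the strictly-increasing case the stray value $K^x(i, n_j)$ exceeds $\max F_0$, so it lies in neither $A$ (by construction) nor $F_0$, hence not in $A'$, making the $i$th coordinate of $x'_{n_j}$ equal to $0 = d_i$. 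The symmetric argument handles $y'_n$. The main technical obstacle is the interleaved diagonalization in the previous paragraph --- simultaneously pushing all the strictly-increasing-to-infinity stray sequences past $a_{j-1}$ at stage $j$ while keeping $x_{n_j}, y_{n_j}$ close to $z$ --- but this is achieved by a standard Cantor-style selection since only finitely many sequences need attention at each finite stage.
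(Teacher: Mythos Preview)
Your argument is correct, but it follows a genuinely different route from the paper's. You pin down an explicit accumulation point $(z,d)$ of both lifted sequences by doing a case analysis on the (at most one) index $m$ at which $f^i_m$ is positive at $z$, $x_n$, or $y_n$, and then you engineer $A$ so that the ``stray'' positive indices for the troublesome coordinates fall in the gaps of $A$. The paper instead never names a limit point: it passes to subsequences with $d(x_n,y_n)<1/n$, then builds $A=\{n_k\}$ together with indices $l_k$ so that, whenever $A'$ misses the interval $(n_k,n_{k+1})$, uniform continuity of the finite partial sums $\sum_{n\in a}f^i_n$ with $a\subseteq[0,n_k]$ forces
\[
\Big|\sum_{n\in A'}f^i_n(x_{l_k})-\sum_{n\in A'}f^i_n(y_{l_k})\Big|<1/k\quad\text{for all }i\leq k.
\]
From this the closures cannot be separated by any basic open set in $L\times[0,1]^{\N}$, so they intersect by compactness.

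What each buys: the paper's argument avoids the three-way case split and the bookkeeping of the sequences $K^x(i,\cdot),K^y(i,\cdot)$; it is shorter and works uniformly via continuity of finite sums, without ever asking which $f^i_m$ is positive where. Your argument, on the other hand, is more constructive: it exhibits the common accumulation point explicitly and makes transparent why membership of $k_i^*$ in $A'$ is exactly what decides the $i$th coordinate of the limit. One minor remark on your write-up: the sentence ``every stray value \ldots\ lies in some open interval $(a_{j-1},a_j)$'' is only literally true for those stray values $K^x(i,n_j)$ with $\mathrm{rank}(i)\leq j$; but since in the verification you take $j$ large relative to the finite coordinate set $F$, this is exactly what you need and the argument goes through.
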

\begin{proof} Let $d$ be a metric on $L$ compatible with the topology. 
By going to a subsequence and possibly renumerating the points we may assume that
$$d(x_n, y_n)<1/n.\leqno 1)$$
We construct two strictly increasing sequences $A=\{n_k: k\in \N\}$ 
and $(l_k: k\in \N)$ such that whenever $A'\cap(n_k, n_{k+1})=\emptyset$ then,
for all $i\leq k$ we have
$$|\sum_{n\in A'}f^i_n(x_{l_k})-\sum_{n\in A'}f^i_n(y_{l_k})|<1/k.\leqno 2)$$

This will be enough, since if $A'$ is almost included in $A$, the above will hold for
almost all $k\in \N$; if $\{(x_{l_k}, \Pi_{i\in \N}\sum_{n\in A'}f^i_n(x_{l_k})): k\in\N\}$
were separated from $\{(y_{l_k}, \Pi_{i\in \N}\sum_{n\in A'}f^i_n(y_{l_k})): k\in\N\}$, it could be done by clopen basic sets, which involve only finitely many coordinates, so
there would be $k_0$ such that $\{(x_{l_k}, \Pi_{i\leq k_0}\sum_{n\in A'}f^i_n(x_{l_k})): k\in\N\}$
were separated from $\{(y_{l_k}, \Pi_{i\leq k_0}\sum_{n\in A'}f^i_n(y_{l_k})): k\in\N\}$.
But 2) implies that these points of $L\times[0,1]^{k_0}$ are arbitrarily close.

So let us focus on proving 2).  The construction of $\{n_k: k\in \N\}$ 
and $(l_k: k\in \N)$ is by induction. Suppose that we are done up to $k$.
  $\sum_{n\in a}f^i_n$ are uniformly continuous
for each $i\leq k$ and each $a\subseteq [0,n_k]$. So find  $\delta>0$ such that
if $d(x,y)<\delta$ then $|\sum_{n\in a}f^i_n(x)-\sum_{n\in a}f^i_n(y)|<1/k$
holds for each $i\leq k$ and each $a\subseteq [0,n_k]$. Take $l_{k+1}$ such that
$\delta>1/l_{k+1}$, $l_{k+1}>l_k$ and take $n_{k+1}>n_k$ big enough so that
for all $i\leq k$ we have 
$$\sum_{n\geq n_{k+1}}f^i_n(x_{l_{k+1}})
=\sum_{n\geq n_{k+1}}f^i_n(y_{l_{k+1}})=0.\leqno 3)$$
This completes the inductive construction.

Now, suppose that $A'\subseteq \N$ is such that $A'\cap (n_k, n_{k+1})=\emptyset$.
Put $a=A'\cap [0, n_{k}]$. By 1) and 3) for every $i\leq k+1$ we have
$$|\sum_{n\in A'}f^i_n(x_{l_{k+1}})-
\sum_{n\in A'}f^i_n(y_{l_{k+1}})|=$$
$$|\sum_{n\in a}f^i_n(x_{l_{k+1}})-
\sum_{n\in a}f^i_n(y_{l_{k+1}})+\sum_{n\geq n_{k+1}}f^i_n(x_{l_{k+1}})
-\sum_{n\geq n_{k+1}}f^i_n(y_{l_{k+1}})|=$$
$$|\sum_{n\in a}f^i_n(x_{l_{k+1}})-
\sum_{n\in a}f^i_n(y_{l_{k+1}})|<1/k$$
what was required.

\end{proof}

Given $p\in\PP$
we can add to $\mathcal F_p$ suprema of continuous functions on $\nabla \mathcal F_p$ in such a way
that they cannot be destroyed when we pass to bigger conditions $q\leq p$.

\begin{definition}\label{definitionsupindes}
Suppose that $\mathcal F\subseteq C_I(K)$ and 
$f_n\in C_I(\nabla \mathcal F)$ form a bounded sequence of functions, and let
$h\in C(\nabla \mathcal F)$ be the supremum of  $f_n$s.
Then we say that $h$ is an indestructible supremum 
of $f_n$s if and only if for any $\mathcal G\subseteq C_I(K)$ 
such that $\mathcal F\subseteq \mathcal G$ we have that
$h\circ \pi_{\mathcal F \mathcal G}$ is the supremum of 
$(f_n\circ \pi_{\mathcal F\mathcal G})_{n\in \N}$ in 
 $ C(\nabla \mathcal G)$.
\end{definition}

\begin{lemma}\label{lemmasupindes}
Suppose that $\mathcal F\subseteq C_I(K)$ and 
$f_n\in C_I(\nabla \mathcal F)$ form a bounded sequence of functions, 
Then there is a $g\in C_I(K)$
such that in $C(\nabla (\mathcal F\cup\{g\}))$ there is an indestructible supremum of 
$f_n\circ\pi_{\mathcal F, \mathcal F\cup\{g\}}$s.
\end{lemma}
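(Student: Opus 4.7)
The plan is to manufacture $g$ as a supremum computed at the top level of $K$, exploiting that $C(K)$ is Dedekind complete ($K$ is extremally disconnected), and then transfer this supremum down to $\nabla(\mathcal F\cup\{g\})$ via the obvious coordinate projection. Concretely, the functions $f_n\circ\Pi\mathcal F$ lie in $C_I(K)$, so I would set
$$g:=\sup_{n\in\N}\,(f_n\circ\Pi\mathcal F)\quad\text{computed in } C(K),$$
noting that $0\le g\le 1$ since all $f_n$ take values in $[0,1]$, so $g\in C_I(K)$.

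Next, let $h\colon\nabla(\mathcal F\cup\{g\})\to[0,1]$ be the coordinate projection onto the $g$-axis inside $[0,1]^{\mathcal F\cup\{g\}}$; this is continuous, and by the very definition of $\Pi$ satisfies $h\circ\Pi(\mathcal F\cup\{g\})=g$ on $K$. I claim this $h$ is the desired indestructible supremum. Write $\tilde f_n:=f_n\circ\pi_{\mathcal F,\mathcal F\cup\{g\}}$ and observe the compatibility $\pi_{\mathcal F,\mathcal F\cup\{g\}}\circ\Pi(\mathcal F\cup\{g\})=\Pi\mathcal F$, which is immediate from the definition of $\Pi$ as the product of evaluation maps (and whose analogues for any $\mathcal G\supseteq\mathcal F\cup\{g\}$ are equally immediate).

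To establish indestructibility, let $\mathcal G\supseteq\mathcal F\cup\{g\}$ be arbitrary and let $h'\in C(\nabla\mathcal G)$ satisfy $h'\ge \tilde f_n\circ\pi_{\mathcal F\cup\{g\},\mathcal G}$ for every $n$. Composing with the surjection $\Pi\mathcal G\colon K\to\nabla\mathcal G$ and using the projection-commutation identity, this reads in $C(K)$ as
$$h'\circ\Pi\mathcal G \;\ge\; f_n\circ\Pi\mathcal F\qquad\text{for every } n\in\N.$$
Since $g$ is by definition the least upper bound of the family on the right in the Dedekind complete lattice $C(K)$, we obtain $h'\circ\Pi\mathcal G\ge g = h\circ\Pi(\mathcal F\cup\{g\}) = h\circ\pi_{\mathcal F\cup\{g\},\mathcal G}\circ\Pi\mathcal G$; and then surjectivity of $\Pi\mathcal G$ cancels it on the right, giving $h'\ge h\circ\pi_{\mathcal F\cup\{g\},\mathcal G}$ in $C(\nabla\mathcal G)$. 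Specializing to $\mathcal G=\mathcal F\cup\{g\}$ already shows that $h$ is the supremum of the $\tilde f_n$ in $C(\nabla(\mathcal F\cup\{g\}))$, so the general case supplies exactly the indestructibility clause of \ref{definitionsupindes}.

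There is essentially no hard step here: the whole content is that the extremal disconnectedness of $K$ lets us realize the supremum once and for all at the top of the inverse system, and the coordinate projection then tautologically records it. The only thing to watch is the bookkeeping of the projections $\pi_{\cdot,\cdot}$ and $\Pi\mathcal G$, which all commute coordinate-wise. I would expect the write-up to be short and almost diagrammatic.
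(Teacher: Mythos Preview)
Your approach is essentially identical to the paper's: define $g$ as the supremum of $f_n\circ\Pi\mathcal F$ in the Dedekind complete lattice $C(K)$, take $h$ to be the $g$-coordinate projection, and verify indestructibility by pulling an arbitrary upper bound back along $\Pi\mathcal G$ to compare with $g$ in $C(K)$. The only small omission is that you never explicitly check that $h$ (resp.\ $h\circ\pi_{\mathcal F\cup\{g\},\mathcal G}$) is itself an upper bound of the $\tilde f_n$ (resp.\ $\tilde f_n\circ\pi$); this follows immediately from $g\ge f_n\circ\Pi\mathcal F$ by the same surjectivity-of-$\Pi$ argument you already use, but without it your ``specializing to $\mathcal G=\mathcal F\cup\{g\}$'' sentence only shows $h$ lies below every upper bound, not that $h$ is one.
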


\begin{proof}
Consider $f_n\circ \Pi \mathcal F:K\rightarrow \R$. Since $K$ is
extremely disconnected, $C(K)$ is a complete lattice
and so  we have $g=sup(f_n\circ \Pi \mathcal F)$ in $C(K)$.
Now, the supremum $h$
of $f_n\circ\pi_{\mathcal F, \mathcal F\cup\{g\}}$s is just taking $g$'s coordinate in $\nabla (\mathcal F\cup\{g\})$.
First we will note that $h\geq f_n\circ\pi_{\mathcal F, \mathcal F\cup\{g\}}$ for
all $n\in\N$. Take $t\in \nabla(\mathcal F\cup\{g\})$ and $x\in K$ such that
$t=\Pi(\mathcal F\cup\{g\})$, we have
$$f_n\circ\pi_{\mathcal F, \mathcal F\cup\{g\}})(t)=
f_n\circ\pi_{\mathcal F, \mathcal F\cup\{g\}}\circ\Pi(\mathcal F\cup\{g\})(x)=
f_n\circ\Pi(\mathcal F)(x)\geq g(x)=t(g)=h(t),$$
as needed.

Now, suppose $\mathcal H=\mathcal F\cup \{g\}\subseteq \mathcal G\subseteq C_I(K)$
and that $h\circ \pi_{\mathcal H \mathcal G}$ is not the supremum of 
$f_n\circ \pi_{\mathcal F \mathcal G}$s in 
$C(\nabla \mathcal G)$. Let  $f\in C_I(\nabla \mathcal G)$ be a function which witnesses it.
That is $f_n\circ \pi_{\mathcal F,\mathcal G}\leq f$ but there is a nonempty open
set $U\subseteq \nabla \mathcal G$ such that $f|U<h\circ \pi_{\mathcal H,\mathcal G}|U$.
Note that then we  have 
$$f_n\circ \Pi \mathcal F=f_n\circ \pi_{\mathcal F ,\mathcal G}\circ \Pi \mathcal G\leq f\circ \Pi \mathcal G.$$
and
$$f\circ \Pi \mathcal G | V< h\circ\pi_{\mathcal H,\mathcal G}\circ \Pi \mathcal G|V=g|V,$$
where $V=(\Pi \mathcal G)^{-1}[U]$. But this contradicts  the fact that $g$ is the supremum of 
$f_n\circ\Pi \mathcal F$s in $C(K)$.
\end{proof}

\begin{lemma}\label{supremasextension}
Suppose $\mathcal F\subseteq C_I(K)$ and that for each $i\in \N$
the sequences $(f_n^i)_{n\in\N}$ of elements
of $C_I(\nabla\mathcal F)$ are pairwise disjoint.
Suppose that all elements of $\mathcal F$
 depend on a set $A\subseteq \omega_2$ and that
$\{d_\xi: \xi\in A\}\subseteq \mathcal F$.
Suppose that $f^i\in C_I(K)$ is the supremum of
$(f_n^i\circ \Pi\mathcal F)_{n\in\N}$. Then $\nabla ( \mathcal F\cup\{f^i:i\in\N\})$
is an extension of $\nabla F$ by $(f_n^i)_{n\in\N}$.
\end{lemma}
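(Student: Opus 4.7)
Let $\pi = \Pi\mathcal F$, $\tilde\pi = \Pi(\mathcal F \cup \{f^i : i \in \N\})$, $V = \pi^{-1}[D]$ where $D = D((f_n^i)_{n, i \in \N})$, and let $F = \prod_{i \in \N}\sum_{n \in \N}f_n^i$, which is well-defined on $D$ by \ref{lemmaD}. Write $M = \nabla(\mathcal F \cup \{f^i : i \in \N\})$; the goal is to prove $M = \overline{GR(F|D)}$.

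First I would show that on $V$ the identity $f^i = \sum_n f_n^i \circ \pi$ holds pointwise. By pairwise disjointness, at each $x \in K$ at most one term in the sum is nonzero, so the pointwise supremum $g^*_i := \sup_n(f_n^i \circ \pi)$ equals the pointwise sum. For $x \in V$, $\pi(x) \in D$ admits a neighborhood in $\nabla\mathcal F$ meeting only finitely many supports $\mathrm{supp}(f_n^i)$, so on the preimage (an open neighborhood of $x$) $g^*_i$ is a finite sum of continuous functions, hence continuous at $x$. Since $K$ is extremely disconnected, $C(K)$ is Dedekind complete and $f^i$ is the upper semi-continuous envelope of the lower semi-continuous $g^*_i$; this envelope agrees with $g^*_i$ at every continuity point of $g^*_i$, giving $f^i|V = g^*_i|V$.

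It follows immediately that $\tilde\pi[V] = \{(\pi(x),(\sum_n f_n^i(\pi(x)))_i) : x\in V\} = GR(F|D) \subseteq M$, and since $M$ is closed, $\overline{GR(F|D)} \subseteq M$. For the reverse inclusion, by continuity of $\tilde\pi$ it suffices to establish that $V$ is dense in $K$: then $M = \tilde\pi[K] = \tilde\pi[\overline V] \subseteq \overline{\tilde\pi[V]} = \overline{GR(F|D)}$. Because $\mathcal F$ depends on $A$, $\pi$ factors as $\pi = \bar\pi \circ \rho_{A,\omega_2}$ for a continuous $\bar\pi : K_A \to \nabla\mathcal F$, and a direct verification using independence of the generators outside $A$ from $Co(A)$ shows that $\rho_{A,\omega_2}$ sends any basic clopen of $K$ onto a basic clopen of $K_A$, hence is an open surjection. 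Consequently, density of $V$ in $K$ reduces to density of $\bar\pi^{-1}[D] = \bigcap_i \bar\pi^{-1}[D((f_n^i)_n)]$ in $K_A$; by the Baire category theorem it is enough that each open set $\bar\pi^{-1}[D((f_n^i)_n)]$ be dense in $K_A$.

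The main obstacle is this last density claim for each fixed $i$, and here the plan is to exploit the hypothesis $\{d_\xi : \xi \in A\} \subseteq \mathcal F$ in an essential way --- noting that this implicitly forces $[\alpha,\alpha+\omega)\subseteq A$ for each $\alpha\in A$, since each $d_\xi$ depends on $[\xi,\xi+\omega)$. Given a nonempty basic clopen $[c]_{K_A}$ with $c \in Fr(A)$ involving only finitely many generators $a_{\eta_1},\ldots,a_{\eta_k}$, and given any finite subfamily of $\mathcal F$ whose coordinates we wish to control, choose $\xi \in A$ such that $[\xi,\xi+\omega)$ is disjoint from $\{\eta_j\}$ and from the relevant finite union of coordinate sets $S_f$. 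Independence of $\{a_{\xi+i}:i<\omega\}$ from the algebra generated by the remaining coordinates then forces the $d_\xi$-coordinate of $\bar\pi(u)$ for $u \in [c]_{K_A}$ to range freely over an open subset of $[0,1]$ while the other chosen coordinates remain fixed, producing an open "strip" inside a finite subproduct of $\nabla\mathcal F$ contained in the projection of $\bar\pi[[c]_{K_A}]$. Such a strip must intersect the dense open $D((f_n^i)_n)$, yielding a witness in $[c]_{K_A} \cap \bar\pi^{-1}[D((f_n^i)_n)]$ as required.
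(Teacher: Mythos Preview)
Your overall architecture matches the paper's: reduce to showing that $V=\pi^{-1}[D]$ is dense in $K$, and that on $V$ the supremum $f^i$ coincides with the pointwise sum. The first two paragraphs are fine, and the factoring through $K_A$ via the open map $\rho_{A,\omega_2}$ is a correct (if slightly more elaborate than necessary) reduction.

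The genuine gap is in your last paragraph. Two problems. First, the auxiliary $\xi$ you want need not exist: you ask for $[\xi,\xi+\omega)$ disjoint from a finite union of sets $S_f$, but each $S_f$ is in general an infinite (countable) subset of $A$, and nothing prevents a single $S_f$ from being all of $A$. Second, and more seriously, even granting such a $\xi$, your ``strip'' lives only in a finite subproduct of $[0,1]^{\mathcal F}$; you show it lies in the \emph{projection} of $\bar\pi[[c]]$, but the preimage of that strip back in $\nabla\mathcal F$ is not contained in $\bar\pi[[c]]$. So intersecting the preimage with the dense open $D$ gives you a point of $D$, but not one that you can pull back into $[c]$.

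The paper's argument avoids all of this by using the functions $d_{\eta_1},\ldots,d_{\eta_k}$ attached to the very generators appearing in $c$. Writing $c=a_{\eta_1}^{\varepsilon_1}\cap\cdots\cap a_{\eta_k}^{\varepsilon_k}$ with $\eta_j\in A$, the open set
\[
E=\bigcap_{j\leq k}\{t\in\nabla\mathcal F:\ |t(d_{\eta_j})-\varepsilon_j|<1/4\}
\]
(with $\varepsilon_j$ read as $0$ or $1$) is nonempty, and the point is that $\bar\pi^{-1}[E]\subseteq [c]$: if $d_{\eta_j}(u)$ is close enough to $\varepsilon_j$ then the first binary digit forces $u\in[a_{\eta_j}^{\varepsilon_j}]$. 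Hence $E\subseteq\bar\pi[[c]]$ is an honest open subset of $\nabla\mathcal F$, so $E\cap D\neq\emptyset$ and any preimage lands in $[c]\cap\bar\pi^{-1}[D]$. Replacing your last paragraph with this observation completes the proof.
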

\begin{proof}
First we will prove that if $D\subseteq \nabla \mathcal F$ is dense and open in 
$\nabla \mathcal F$, then $(\Pi \mathcal F)^{-1}[D]$ is dense and open in $K$.
Suppose not, then there  are $\xi_1, ..., \xi_n\in\omega_2$ and $\varepsilon_1, ...,
\varepsilon_n\in \{-1,1\}$ such that 
$$[a_{\xi_1}^{\varepsilon_1}]\cap ... \cap [a_{\xi_n}^{\varepsilon_n}]\cap (\Pi \mathcal F)^{-1}[D]
=\emptyset,$$
Where $a^1=a$ and $a^{-1}$ is the complement of $a$ in the Boolean algebra $Fr(\omega_2)$.
As elements of $\mathcal F$ depend on $A$, the characteristic function of
$(\Pi \mathcal F)^{-1}[D]$ depend on $A$, and hence we may assume that 
$\xi_i\in A$ for all $i\leq n$.
$$E=\bigcap _{1\leq i\leq n}\{t\in \nabla\mathcal F: |t(d_{\xi_i})-\varepsilon_i|<1/4\}$$
is a nonempty basic open set in $\nabla \mathcal F$, since $\Pi\{d_{\xi_i}: 1\leq i\leq n\}$s are
onto $[0,1]^{\{d_{\xi_i}: 1\leq i\leq n\}}$. So $E\cap D\not=\emptyset$ by the
density of $D$. But if $x\in (\Pi \mathcal F)^{-1}[E\cap D]$
then $x\in [a_{\xi_i}^{\varepsilon_i}]$ for each $i\leq n$ a contradiction.

In particular we can conclude that for each $i\in \N$ we have that $(\Pi\mathcal F)^{-1}[D((f_n^i)_{n\in \N})]$
is dense open in $K$ and so  
$$(\Pi\mathcal F)^{-1}[\bigcap_{i\in \N}D((f_n^i)_{n\in \N})]=
\bigcap_{i\in\N}(\Pi\mathcal F)^{-1}[D((f_n^i)_{n\in \N})]$$
is dense in $K$ and hence
$\nabla (\mathcal F\cup\{f^i:i\in\N\})$  is the 
closure of 
$$\Pi\mathcal (F\cup\{f^i:i\in\N\})[(\Pi\mathcal F)^{-1}[\bigcap_{i\in\N}D((f_n)_{n\in \N})]].$$
Note that for each $i\in\N$ we have $(\Pi\mathcal F)^{-1}[D((f_n^i)_{n\in \N})]\subseteq$
$D((f_n^i\circ \Pi\mathcal F)_{n\in\N})$, because the preimages of disjoint sets
are disjoint. By \ref{lemmaD} (2), $f^i$ coincides with
$\sum_{n\in \N} f_n^i\circ \Pi\mathcal F$ on $D((f_n^i\circ \Pi\mathcal F)_{n\in\N})$,
so $\nabla (\mathcal F\cup\{f^i:i\in\N\})$  is the 
closure of
$$\Pi\mathcal (\mathcal F\cup\{\sum_{n\in\N}f^i_n\circ \Pi\mathcal F:i\in\N\})[(\Pi\mathcal F)^{-1}[\bigcap_{i\in\N}D((f_n^i)_{n\in \N})]]$$
which is exactly the closure of the graph of $\Pi_{i\in\N}\Sigma_{n\in\N}f^i_n$ restricted
to $\bigcap_{i\in\N}D((f_n^i)_{n\in \N})$ which completes the proof of the lemma.
\end{proof}

\section{The main extension lemma}

\begin{lemma}\label{preimages2-2disjoint} Suppose $f: K\rightarrow L$ is a continuous function
and $(f_n)_{n\in \N}$ is a sequence of pairwise disjoint functions in $C_I(L)$.
Then $(f_n\circ f)_{n\in \N}$ is  pairwise disjoint as well.
\end{lemma}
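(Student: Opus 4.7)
The plan is to verify the definition of pairwise disjointness directly at each point. Recall that pairwise disjoint, in the sense used just before Lemma~\ref{lemmadisjointsums}, means that for all distinct $n, n' \in \N$ and all $z$ in the domain, $f_n(z) f_{n'}(z) = 0$.

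So, fix distinct $n, n' \in \N$ and an arbitrary point $x \in K$. Then $(f_n \circ f)(x) \cdot (f_{n'} \circ f)(x) = f_n(f(x)) \cdot f_{n'}(f(x))$, and since $f(x) \in L$ and the sequence $(f_n)_{n \in \N}$ is pairwise disjoint on $L$, this product equals $0$. Hence $(f_n \circ f)_{n \in \N}$ is pairwise disjoint. Note that only the pointwise disjointness is used; the continuity of $f$ is not needed for the disjointness itself, though of course continuity of $f$ together with continuity of each $f_n$ ensures that each $f_n \circ f$ lies in $C_I(K)$, matching the ambient setup where we later want to apply lemmas such as \ref{strongalmostinfinite} to the pulled-back sequence.

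There is no real obstacle here; the statement is essentially a restatement of the pointwise definition under composition. The only thing worth flagging is that the result would fail for a notion of disjointness formulated in terms of supports via open sets (a preimage of a meager or nowhere dense set need not be nowhere dense in general), but the formulation used in the paper is the purely pointwise one $f_n(x) f_{n'}(x) = 0$, which is preserved trivially by composition with any function.
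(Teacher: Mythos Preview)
Your proof is correct; the paper states this lemma without proof, treating it as obvious, and your direct pointwise verification is exactly the intended argument.
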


\begin{lemma}\label{supremacompositions} Suppose that $\phi:K\rightarrow K$ is a homeomorphism
of $K$. Suppose that $f$ is the supremum in $C(K)$ of a 
bounded sequence $(f_n)_{n\in \N}$, then $f\circ \phi$ is the supremum in $C(K)$ of 
$(f_n\circ\phi)_{n\in \N}$.
\end{lemma}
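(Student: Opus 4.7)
The statement asserts that composition with a homeomorphism of $K$ commutes with taking suprema of bounded sequences in $C(K)$. My plan is to reduce this to the observation that $g \mapsto g \circ \phi$ is an order-isomorphism of $C(K)$ onto itself, which then automatically preserves any existing suprema.

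First I would verify that the map $\Phi : C(K) \to C(K)$ defined by $\Phi(g) = g \circ \phi$ is a bijection; its inverse is $g \mapsto g \circ \phi^{-1}$, which is well-defined because $\phi^{-1}$ is continuous by the assumption that $\phi$ is a homeomorphism. Next, I would show that $\Phi$ preserves the pointwise order in both directions: if $g, h \in C(K)$ and $g \leq h$ (pointwise), then clearly $g \circ \phi \leq h \circ \phi$; conversely, if $g \circ \phi \leq h \circ \phi$, then applying the same argument to $\phi^{-1}$ gives $g \leq h$. So $\Phi$ is an order-isomorphism.

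With this in hand, the rest is immediate. Since $f = \sup_{n \in \N} f_n$ in $C(K)$, we have $f \geq f_n$ for all $n$, hence $f \circ \phi \geq f_n \circ \phi$ for all $n$, so $f \circ \phi$ is an upper bound of $(f_n \circ \phi)_{n \in \N}$. Suppose $h \in C(K)$ is any other upper bound, so $h \geq f_n \circ \phi$ for all $n \in \N$. Then $h \circ \phi^{-1} \geq f_n \circ \phi \circ \phi^{-1} = f_n$ for each $n$, and since $f$ is the least upper bound of the $f_n$'s, we conclude $h \circ \phi^{-1} \geq f$; composing once more with $\phi$ gives $h \geq f \circ \phi$. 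Thus $f \circ \phi$ is the least upper bound of $(f_n \circ \phi)_{n \in \N}$, as required.

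There is no real obstacle here: the argument is purely order-theoretic and uses only that $\phi$ is a bijective continuous map with continuous inverse. The boundedness of $(f_n)_{n \in \N}$ is used only implicitly, in that it guarantees the existence of $f$ in the first place (using that $C(K)$ is Dedekind complete, as $K$ is extremely disconnected); once $f$ exists, the statement is formal.
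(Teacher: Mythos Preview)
Your proof is correct and takes essentially the same approach as the paper: the paper's proof is a single sentence noting that a homeomorphism of $K$ induces an order-preserving isometry of $C(K)$, so suprema are preserved, which is exactly the observation you spell out in detail.
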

\begin{proof} A homeomorphism of $K$ induces an order-preserving isometry of $C(K)$,
so the supremum must be preserved.
\end{proof}

\begin{lemma}\label{mainextensionlemma}
Let $p$ be in $\Q$. Suppose
that 
\begin{enumerate}[(1)]
\item $(f_n)_{n\in \N}$ is  a sequence of pairwise disjoint functions
in $C_I(\nabla\mathcal F_p)$,

\item $\{\xi_m: m\in \N\}\subseteq \alpha_p$ is such that $f_n(x_{\xi_m}^p)=0$ for each $n,m\in \N$.
\end{enumerate}
Then there is an infinite $A\subseteq \N$ such that for each infinite
$A'\subseteq A$ there is $q_{A'}\leq p$ in $\Q$ such that:
\begin{enumerate}[(a)]
\item there is an $f\in C_I(\nabla \mathcal F_{q_{A'}})$ which is the  indestructible supremum of $(f_i\circ 
\pi_{\mathcal F_p, \mathcal F_{q_A'}})_{i\in A'}$,
\item $({\{x_{\xi_n}^{q_{A'}}: n\in A'\}},{\{x_{\xi_n}^{q_{A'}}: n\not\in A'\}})\in \mathcal P_{q_{A'}}.$

\end{enumerate}
\end{lemma}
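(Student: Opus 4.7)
The plan is to combine the indestructible-supremum machinery (\ref{lemmasupindes}), which exploits the extremal disconnectedness of $K$, with the strong-extension and promise-preservation tools (\ref{strongalmostinfinite} and \ref{preservingpromisses}) from the previous section, together with a countable closure under the $\Sigma_{\xi,A}(A_p)$-action required by clause (5) of \ref{definitionP}. For each infinite $A'\subseteq\N$, \ref{lemmasupindes} produces $g_{A'}\in C_I(K)$ which is an indestructible supremum of $(f_n\circ\Pi\mathcal{F}_p)_{n\in A'}$ in $C(K)$; by \ref{supremacompositions} and \ref{preimages2-2disjoint} the analogous statement holds for every composite $g_{A'}\circ\phi(\sigma)$, which will be needed to re-satisfy (5).

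I would extract the universal $A$ by a diagonal argument. Enumerate the countably many bijections $\sigma$ needed to close under $\Sigma$-translates while realizing (5), apply \ref{strongalmostinfinite} to each pulled-back sequence $(f_n\circ\phi(\sigma))_{n\in\N}$, and further refine using \ref{preservingpromisses} applied to every $(L,R)\in\mathcal{P}_p$. Because $\nabla\mathcal{F}_p$ is compact metrizable ($\mathcal{F}_p$ is countable), I would additionally thin $A$ so that $(x^p_{\xi_n})_{n\in A}$ is an injective sequence converging to some $x^*\in\nabla\mathcal{F}_p$ distinct from all $x^p_{\xi_n}$, and so that $x^*$ is also an accumulation point of $(x^p_{\xi_n})_{n\notin A}$; the former yields relative discreteness of both sides of (b) in $\nabla\mathcal{F}_p$, and the latter supplies the ``closures intersect'' hypothesis for the final application of \ref{preservingpromisses} that will give the new promise in (b). A standard diagonal intersection delivers a single $A$ with all these properties.

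Given $A$ and infinite $A'\subseteq A$, I would build $q_{A'}$ as follows. Form $\mathcal{F}^*\supseteq\mathcal{F}_p\cup\{g_{A'}\}$ as the union of a decreasing $\omega$-chain of conditions in $\PP$, adding at stage $n$ the composite $g_{A'}\circ\phi(\sigma_n)$ (together with its support set) for the $n$th enumerated instance of (5); by \ref{sigmaclosedP} the limit satisfies (1)--(5) of $\PP$, while connectedness (6) follows by iterating \ref{strongconnected} using the strong-extension property guaranteed in step two. For the dense family $\mathcal{X}_{q_{A'}}$, extend each $x^p_\beta$ via a $K$-preimage $u^p_\beta$ (obtained from \ref{ultrafilters}) by setting $x^{q_{A'}}_\beta=\Pi\mathcal{F}^*(u^p_\beta)$; crucially, for $\beta=\xi_m$ pick $u^p_{\xi_m}$ inside the dense open set $D((f_n\circ\Pi\mathcal{F}_p)_{n\in A'})$, which forces the $g_{A'}$-coordinate of $x^{q_{A'}}_{\xi_m}$ to be $0$. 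Clause (a) is immediate from indestructibility of $g_{A'}$. Clause (b) then splits: relative discreteness of both sides transfers from $\nabla\mathcal{F}_p$ via continuity of the projection $\nabla\mathcal{F}^*\to\nabla\mathcal{F}_p$, while the intersection of their closures follows from \ref{preservingpromisses} at the common lift of $x^*$. Finally, enlarge $\mathcal{X}_{q_{A'}}$ by countably many new points to regain density in $\nabla\mathcal{F}^*$, and check that each previously committed $(L,R)\in\mathcal{P}_p$ is still honoured via \ref{preservingpromisses}.

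The main obstacle is the coordination inside the diagonal argument: a single $A$ has to serve every infinite $A'\subseteq A$ and every one of the countably many $\sigma$-translates needed for the $\Sigma$-closure in (5), simultaneously delivering strong extension (for connectedness via \ref{strongconnected}), convergence to a common limit $x^*$ (for the new intersection of closures), an injective thinning of the $\xi_n$-sequence (for relative discreteness in (b)), and promise preservation (for each old element of $\mathcal{P}_p$ as well as the new pair in (b)). This is precisely why the lemma fixes $A$ once before producing $q_{A'}$; once $A$ is correctly set up, the construction of $q_{A'}$ for each $A'\subseteq A$ is a comparatively routine bookkeeping exercise built on the tools of Section~4.
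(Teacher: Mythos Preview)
Your outline has the right ingredients but a genuine gap in the handling of clause~(5) of Definition~\ref{definitionP} for the new supremum. Since the $f_n$ live on $\nabla\mathcal{F}_p$, the function $g_{A'}$ depends on all of $A_p$; but $\mathcal{I}_p$ is a \emph{proper} ideal, so $A_p\notin\mathcal{I}_p$. Thus (5) for $g_{A'}$ forces $A_p\in\mathcal{I}_{q_{A'}}$, hence $A_{q_{A'}}\supsetneq A_p$, and the witnessing $\sigma$'s must move $A_p$ off itself inside this larger set. You write ``enumerate the countably many bijections $\sigma$ needed'', but these do not exist until the enlargement is built, and each added $g_{A'}\circ\phi(\sigma)$ generates further instances of (5). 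Worse, the translated sequences $(f_n\circ\phi(\sigma))_n$ then depend on coordinates outside $A_p$, so they do not live on $\nabla\mathcal{F}_p$; there is no common metrizable $L$ on which to run \ref{strongalmostinfinite} before the enlargement is carried out. Your order of operations (choose $A$ first, then build a chain in $\PP$) is therefore circular.

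The paper breaks this circularity by reversing the order: it first builds a descending chain $p=p_1\geq p_2\geq\cdots$ in $\Q$ (not merely $\PP$), where each $p_{n+1}$ is the amalgamation of $p_n$ with an isomorphic copy displaced by a specific bijection $\tau_n$, bookkept so that every pair $(\xi,A_{p_k})$ is eventually served. All iterated translates $f_n^{n_1,\dots,n_k}$ then live on the metrizable $\nabla\mathcal{F}_s$ for the limit $s$, and only \emph{then} is \ref{strongalmostinfinite} (together with \ref{preservingpromisses}) applied, once, to this countable family on $\nabla\mathcal{F}_s$. The condition $q_{A'}$ is obtained in one step from $s$ by adjoining all the suprema $f^{n_1,\dots,n_k}_{A'}$ simultaneously, with $\mathcal{I}_{q_{A'}}$ generated by the $A_{p_n}$'s. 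The amalgamation being in $\Q$ is essential: it yields the identity $x^{p_{n+1}}_\beta(f)=x^{p_{n+1}}_\beta(f\circ\phi(\tau_n))$, which is exactly what makes $f_n^{n_1,\dots,n_k}(x^s_{\xi_m})=0$ (your alternative device of ``picking $u^p_{\xi_m}$ in $D$'' can fail, since $x^p_{\xi_m}$ need not lie in $D$).
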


\begin{proof} 
First, we will find an auxiliary condition
$s$ using \ref{sigmaclosedQ} for
a sequence $(p_n)_{n\in\N}$ starting with $p_1= p$. 
Let $\Theta=(\Theta_1, \Theta_2):\N\rightarrow \N\times\N$  be a bijection
such that $\Theta_2(n), \Theta_1(n)\leq n$ holds for each $n\in\N$.
The sequence $(p_n)_{n\in\N}$  is constructed by induction together with
a sequence of bijections $\Gamma_n:\N\rightarrow A_{p_n}$.
Having constructed $p_n$ we find 
$$\eta_n=\Gamma_{\Theta_1(n)}(\Theta_2(n))\in A_{p_{\Theta_1(n)}}\subseteq A_{p_n}.$$
Now consider an bijection $\tau_n:\omega_2\rightarrow\omega_2$
such that $\tau_n\restriction \eta_n=Id_{\eta_n}$ and
$\tau_n[A_{p_n}\setminus\eta_n]>  \sup(A_p)$. Construct a condition $p_n'$, transporting all the appropriate
objects by functions induced by $\tau_n$, so that $\tau_n$ witnesses that
$p_n$ and $p_n'$ are isomorphic. Now let $p_{n+1}\leq p_n, p_n'$ be 
the amalgamation like in \ref{amalgamationQ}. In particular for $f\in \mathcal F_{p_n}$
for every $\beta<\alpha_{p_n}$ we have:
$$x^{p_{n+1}}_\beta(f)=x^{p_{n+1}}_\beta(f\circ \phi(\tau_n)).\leqno +)$$
Finally let $\Gamma_{n+1}: \N\rightarrow A_{p_{n+1}}$ be any bijection between
these sets. Let
$s$ be the lower bound of $(p_n)_{n\in\N}$ obtained using \ref{sigmaclosedQ}. 

We will enrich several coordinates of $s$ to obtain the conditions $q_{A'}$.
We mantain $A_{q_{A'}}=A_s$.
The main change will be adding the supremum of the $f_n$s which yields a bigger change in
$\mathcal F_s$ in order to prove  (5) of \ref{definitionQ}, this also implies the necessity of changing 
$\mathcal I_s$, and $\mathcal X_s$ and of course we will add the promise from \ref{mainextensionlemma} to
$\mathcal P_s$. The coordinate $\alpha_{q_{A'}}$  will become  $\alpha_s+\omega$.

\vskip 6pt
\noindent{\bf Claim 0:} Note that  $\tau_n\in \Sigma_{\eta_n, A_{p_n}} (A_s)$.
\vskip 6pt

\noindent{\bf Notation:} $\I$ will denote the set of all finite strictly
increasing sequences of the form $n_1< ...< n_k$ where $n_1, ..., n_k\in \N$ for
$k\in\N$. The empty sequence $\emptyset$ also belongs to $\I$.
\vskip 6pt
\noindent{\bf Claim 1:} Suppose $(n_1, ..., n_k)\in\I$.
If $x, y\in K$ are such that $\Pi \mathcal F_{s}(x)=\Pi \mathcal F_{s}(y)$,
then $\Pi \mathcal F_{p}\circ
\phi(\tau_{n_1})\circ ...\circ\phi(\tau_{n_k})(x)=\Pi \mathcal F_{p}\circ
\phi(\tau_{n_1})\circ ...\circ\phi(\tau_{n_k})(y).$\par

\noindent \underline{Proof of the claim:} 
Let $f\in \mathcal F_p\subseteq \mathcal F_{p_{n_1}}$, so
$f\circ \phi_{\tau_{n_1}}\in F_{p_{n_1+1}}\subseteq \mathcal F_{p_{n_2}}$.
Continuing this argument, we get $f\circ\phi(\tau_{n_1})\circ \phi(\tau_{n_2})\in
\mathcal F_{p_{n_3}}$, and so on, until $f\circ
\phi(\tau_{n_1})\circ ...\circ\phi(\tau_{n_k})\in \mathcal F_s$.
Now the claim follows directly from its hypothesis.

\vskip 6pt
The above claim justifies the introduction of the following:
\vskip 6pt
\noindent{\bf Notation:} 
If $(n_1, ..., n_k)\in \I$,  then  $f_n^{n_1,...,n_k}:\nabla F_{s}\rightarrow[0,1]$ is
a function satisfying: 
$$f_n^{n_1,...,n_k}\circ \Pi \mathcal F_{s}=f_n\circ \Pi \mathcal F_{p}\circ
\phi(\tau_{n_1})\circ ...\circ\phi(\tau_{n_k}).$$
\vskip 6pt
The continuity of the function above follows from the fact that
continuous surjections between compact spaces are quotient maps (see \cite{engelking}).
\vskip 6pt
\noindent{\bf Claim 2:} For every 
 $(n_1, ..., n_k)\in \I$ and every $n_{k+1}>n_k$,  we have 
 $$f_n^{n_1,...,n_k, n_{k+1}}\circ(\Pi \mathcal F_s)=f_n^{n_1,...,n_k}\circ (\Pi \mathcal F_s)\circ 
\phi(\tau_{n_{k+1}}).$$
\vskip 6pt
\noindent{\bf Claim 3:} 
 For every $(n_1, ..., n_k)\in \I$ the sequence
$(f_n^{n_1,...,n_k})_{n\in \N}$ is pairwise disjoint. \par
\noindent \underline{Proof of the claim:} Use \ref{preimages2-2disjoint} and the
fact that $\Pi\mathcal F_s$ is onto $\nabla \mathcal F_s$.
\vskip 6pt
\noindent{\bf Claim 4:}
For every $(n_1, ..., n_k)\in \I$ and every $m\in\N$ we have 
$f_n^{n_1,...,n_k}(x_{\xi_m}^s)=0$.\par

\noindent \underline{Proof of the claim:} We prove it by induction on $k\in\N$. 
For $k=0$, the claim is our assumption that 
$f(x^p_{\xi_m})=0$ and \ref{preimages2-2disjoint}. Suppose we have proved the claim up to $k$. 
Let $t_m\in K$ be such that $(\Pi\mathcal F_{s})(t_m)=x^s_{\xi_m}$.
We have
$$f_n^{n_1,...,n_k}(x_{\xi_m}^s)
=f_n^{n_1,...,n_k}\circ \Pi \mathcal F_{s}(t_m)=$$
$$=f_n\circ \Pi \mathcal F_{p}\circ
\phi(\tau_{n_1})\circ ...\circ\phi(\tau_{n_k})(t_m).$$

If $f\in \mathcal F_p$, then $f(t_m)=x^s_{\xi_m}(f)=x^{p_{n_1}}_{\xi_m}(f)=
x^{p_{n_1+1}}_{\xi_m}(f\circ \phi(\tau_{n_1}))=x^{p_{n_2}}_{\xi_m}(f\circ \phi(\tau_{n_1}))$
by +). Continuing this argument we get that
$$f(t_m)=x^{s}_{\xi_m}(f\circ \phi(\tau_{n_1})\circ ... \circ\phi(\tau_{n_k}))=
f\circ \phi(\tau_{n_1})\circ ... \circ\phi(\tau_{n_k})(t_m).$$
So, $$0=f_n(x^p_{\xi_m})=(f_n\circ \Pi \mathcal F_{p})(t_m)=
f_n\circ\Pi\mathcal F_p\circ \phi(\tau_{n_1})\circ ... \circ\phi(\tau_{n_k})(t_m)=$$
$$=f_n^{n_1,...,n_k}\circ \Pi\mathcal F_s(t_m)=f_n^{n_1,...,n_k}(x^s_{\xi_m}).$$
\vskip 6pt
\noindent{\bf Notation:} If $(n_1, ..., n_k)\in\I$, and $A\subseteq \N$
 then $f^{n_1, ..., n_k}_{A}$ is the supremum
of the pairwise disjoint sequence $(f^{n_1, ..., n_k}_{n}\circ \Pi\mathcal F_s)_{n\in A}$ in
$C_I(K)$.
 \vskip 6pt
\noindent{\bf Claim 5:} If $(n_1, ..., n_k)\in\I$, and $n_{k+1}>n_k$ and $A\subseteq \N$
 then, $f^{n_1, ..., n_k}_{A}\circ\phi(\tau_{n_{k+1}})=f^{n_1, ..., n_k, n_{k+1}}_{A}.$
\vskip 6pt
\noindent\underline{Proof of the claim:} This follows from \ref{supremacompositions} and claim 2.
\vskip 6pt
Now, applying \ref{strongalmostinfinite} and \ref{preservingpromisses} 
find an infinite $A\subseteq \N$ such that
whenever $A'$ is almost included in $A$, then if $M$ is an extension of
$\nabla F_{s}$ by $(f_n^{n_1,...,n_k})_{n\in A', (n_1, ..., n_k)\in \I}$, then
$M$ is a strong extension which preserves all the promisses $(L, R)\in \mathcal P_{q}$
in the sense of \ref{preservingpromisses}.
Moreover, we can assume, by choosing a convergent subsequence of $(x_{\xi_m}^s)_{m\in A}$
that $(x_{\xi_m}^s)_{m\in A}$ converges to a point which is also the limit of a convergent
sequence $(x_{\xi_m}^s)_{m\in B}$ for some $B\subseteq N\setminus A$.

\vskip 6pt
\noindent{\bf Notation:} If $A'$ is almost included in an $A$ as above, then we define
$$q_{A'}=(A_{s},\mathcal F_{s}\cup\{f^{n_1, ..., n_k}_{A'}: (n_1, ..., n_k)\in\I\}, \mathcal I, 
\alpha_{s}+\omega, \mathcal X_{q_{A'}}, \mathcal P_{s}\cup \{L',R'\}),$$
where 
\begin{enumerate}
\item $\mathcal I=\mathcal I_{q_{A'}}$ is the ideal of subsets of $A_s$ generated by the sets $A_{p_n}$ for $n\in\N$.
\item For each $\beta<\alpha_{q_{A'}}=\alpha_s$ the point
 $x^{q_{A'}}_\beta$ is the point of  $$GR(\Pi_{(n_1, ..., n_k)\in\I}\sum_{n\in A'}
 f^{n_1, ..., n_k}_{n})$$ above $x^{s}_\beta$.  
\item For $\beta\in [\alpha_s,\alpha_s+\omega)$ we choose the points
$x_\beta$ so that they form a  dense subset of $\nabla \mathcal F_{q_{A'}}$,
\item  $L'=\{\xi_m: m\in A'\}$ and $R'=\{\xi_m: m\in B\}$.
\end{enumerate}
\vskip 6pt
We will prove that $q_{A'}\in \Q$. First we will check that
 $(A_{s},\mathcal F_{q_{A'}}, \mathcal I)$ is a condition of $\PP$. Certainly 
(1)-(3) is true, and (4) follows from the fact that
$\mathcal F_s=\bigcup_{n\in \N}\mathcal F_{p_n}$ by \ref{sigmaclosedP},
the fact that $f_n$s depend on $A_{p_1}$ and by \ref{generalbijections}.
To check (5) we will prove the following:
\vskip 6pt
\noindent{\bf Claim 6:} Let  $\xi\in A_{s}$, $A\in \mathcal I$ and
let $f_1, ..., f_k$ for $k\in\N$ be 
functions in $\mathcal F_{q_A'}$ and $n\in\N$
such  that $A\subseteq A_{p_n}$, $\xi\in A_{p_n}$ and
$f_1, ..., f_k\in \mathcal F_{p_n}$. Let $m\in\N$ be such that $\Gamma_n(m)=\xi$. 
Let $n'\geq n$ such that $\Theta(n')=(\Theta_1(n'), \Theta_2(n'))=(n,m).$
Then $\tau_{n'}\in \Sigma_{\xi, A}(A_s)$ and 
$f_i\circ \phi(\tau_{n'})\in
F_{q_A'}$ for each $1\leq i\leq k$.\par

\noindent \underline{Proof of the claim:} By the choice of $\Theta$, there is $n'\geq n$ such that $$\Theta(n')=(\Theta_1(n'), \Theta_2(n'))=(n,m),$$
so $\eta_{n'}=\xi$. Since $p_{n'+1}$ is an amalgamation of two isomorphic conditions $p_{n'}$ and
$p_{n'}'$ and the isomorphism is witnessed by $\tau_{{n'}}$ satisfying
$\tau_{n'}\restriction \xi=Id_{\xi}$ and
$\tau_{n'}[A_{p_{n'}}\setminus\xi]>  \sup(A_{p_{n'}})\geq \sup(A)$, since $\eta_{n'}=\xi$, we have
$f_1\circ \phi(\tau_{n'}), ..., f_k\circ\phi(\tau_{n'})\in \mathcal F_{n'+1}\subseteq \mathcal F_s
\subseteq \mathcal F_{q_{A'}}$. This completes the proof of the claim.
\vskip 6pt
Now (5) of \ref{definitionP} follows from the above claim and Claim 5.
By the choice of $A$ based on \ref{strongalmostinfinite}
the space $\nabla \mathcal F_{q_{A'}}$ is a strong extension
of $\nabla F_s$ by $(f^{n_1, ..., n_k}_{n})_{{n\in A'}, (n_1, ..., n_k)\in \mathcal I}$ and so is
connected by Lemma \ref{strongconnected} since $\nabla \mathcal F_s$ was connected.
This gives 6) of the definition \ref{definitionP}. 

Now check Definition \ref{definitionQ}. We just checked (1), (2) is trivial,
(3) follows from the fact that the extension is strong and from the definition of $\mathcal X_{q_{A'}}$.
Now let us check (4) of \ref{definitionQ}.
If $(L,R)\in \mathcal P_s$, then $\{x^{q_{A'}}_\beta: \beta \in L\}
\cap \{x^{q_{A'}}_\beta: \beta \in R\}\not=\emptyset$ by the choice of $A\subseteq \N$
based on  \ref{preservingpromisses}.
Also 
$$\{x^{q_{A'}}_{\xi_m}: m \in A'\}
\cap \{x^{q_{A'}}_{\xi_m}: m \in B\}\not=\emptyset\leqno ++)$$ because
$\Pi_{(n_1, ..., n_k)\in\I}\sum_{n\in A'}
 f^{n_1, ..., n_k}_{n}(x^{q}_{\xi_m})=0$ for all $n,m\in \N$ by Claim 4 and so
$x^{q_{A'}}_{\xi_m}$ is $x^{q}_{\xi_m}$ followed by the sequence consisting only of zeros, hence
the choice of $B$ gives us ++) and (b) of the lemma.

Clearly $f^\emptyset$ is the indestructible supremum of $(f_n)_{n\in A'}$ which completes
the proof of the lemma.
\end{proof}

\section{Continuous functions in $V$ and $V[G]$}

In this and in the following section we will employ the partial order $\Q$ as
a forcing notion (see \cite{kunen}, \cite{jech}). We start with a set-theoretic universe
$V$ which satisfies the CH and we will consider its generic extension $V[G]$
where $G$ is a $\Q$-generic over $V$.

By  \ref{sigmaclosedQ} and \ref{omega2ccQ}, forcing with $\Q$
does not collapse cardinals  if CH holds in $V$ and does not add any new countable
subsets of the universe (see [Ku]),
i.e., the ground set-theoretic universe $V$ and 
the generic extension $V[G]$ have the
same countable sets of $V$ and the same cardinals.\par
This, in particular, means that the completion $Co(\omega_2)$ of the free
Boolean algebra $Fr(\omega_2)$ with $\omega_2$-generators is the same
in both of the universes $V$ and $V[G]$.
The Stone space $K^*$ of $Co(\omega_2)$ is bigger in $V[G]$, but $K$
is dense in it, so every $f\in C(K)$ as a uniformly continuous function,
uniquely extends to a function $f^*\in C(K^*)$. If $\mathcal F\subseteq C_I(K)$,
then $\mathcal F^*$ will denote $\{f^*: f\in \mathcal F\}$.

\noindent As $G$ is a $\Q$-generic filter,  for every two $p, p'\in G$
 there is $q\leq p, p'$ with $q\in G$
and $G\cap D\not=\emptyset$ for any dense set
$D\subseteq \Q$ in the ground set-theoretic universe $V$. 
In the generic extension $V[G]$ we consider $\mathcal F_G=\bigcup \{\mathcal F_p: p\in G\}$
and the 
compact space  $L=\nabla \mathcal F^*_G$
 with the subset $\mathcal X=\{x_\beta: \beta<\omega_1\}$ such that
$x_\beta(f^*)=x_\beta^p(f)$ for any $p\in G$ and $\beta<\alpha^p$.

\begin{lemma}\label{fstarlemma}
Suppose that $p\forces \dot f\in C_I(K^*)$, then there is 
$f\in C_I(K)$ and $q\leq p$ such that $q\forces \check f^*=\dot f$.
\end{lemma}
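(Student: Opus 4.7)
The plan is to exploit the $\sigma$-closure of $\Q$ (Lemma \ref{sigmaclosedQ}), which implies that no new countable sequences of ground-model objects appear in $V[G]$. This lets us force $\dot f$, below some $q \leq p$, to equal the canonical extension of a ground-model continuous function $f \in C_I(K)$. Since $K$ is dense in $K^*$ and $K^*$ is compact, any continuous function on $K^*$ is determined by its restriction to $K$, and any $h \in C_I(K) \cap V$ extends uniquely to $h^* \in C_I(K^*)^{V[G]}$; so it suffices to find $f \in C_I(K) \cap V$ and $q \leq p$ with $q \forces \dot f \restriction \check K = \check f$.

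First, I apply Lemma \ref{countabledependence} inside $V[G]$ to $\dot f \restriction K$: there is a countable $A \subseteq \omega_2$ such that $\dot f \restriction K$ depends on $A$. By $\sigma$-closure no new countable subsets of $\omega_2$ arise, so $A \in V$, and a standard density argument produces $q_0 \leq p$ together with a countable $A \in V$ satisfying $q_0 \forces \dot f \restriction \check K$ depends on $\check A$.

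The proof of \ref{countabledependence} exhibits the dependence through a countable family of clopen separators $\{\dot U_{a,b,c,d}: a<b<c<d \text{ rational}\} \subseteq Co(A)$ satisfying $(\dot f)^{-1}[[b,c]] \cap K \subseteq [\dot U_{a,b,c,d}] \cap K \subseteq (\dot f)^{-1}[(a,d)] \cap K$. Enumerating the rational quadruples as $(a_n, b_n, c_n, d_n)$ for $n \in \N$, I build by $\sigma$-closure a decreasing chain $q_0 \geq q_1 \geq \ldots$ in $\Q$ so that $q_n$ decides $\dot U_{a_n, b_n, c_n, d_n} = \check V_n$ for some $V_n \in Co(A) \cap V$, and take a lower bound $q \leq p$ via Lemma \ref{sigmaclosedQ}.

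In $V$, the family $\{V_n : n\in\N\}$ is then a ground-model family of separators in $Co(A)$, and it determines a unique $f \in C_I(K)$ via the reconstruction implicit in the proof of \ref{countabledependence}. The condition $q$ forces that $\dot f$ and $\check f$ satisfy identical separation conditions relative to the $\check V_n$, so $q \forces \dot f \restriction \check K = \check f$, and by uniqueness of the continuous extension from $K$ to $K^*$, $q \forces \dot f = (\check f)^*$. The main bookkeeping task -- and the only step that really uses anything beyond generalities about $\sigma$-closed forcing -- is the $\sigma$-closure-driven descent of the names $\dot U_{a,b,c,d}$ into ground-model elements of $Co(A)$; the rest of the argument is a routine unpacking of Lemma \ref{countabledependence} together with uniqueness of continuous extensions.
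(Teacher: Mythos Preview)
Your approach is correct, but the paper takes a more direct route. Instead of invoking the clopen separators from the proof of Lemma~\ref{countabledependence}, the paper uses that $K^*$ is zero-dimensional: in $V[G]$ one uniformly approximates $\dot f$ by simple functions $f_n^*=\sum_{i\in\N} r_{in}\chi_{[a_{in}]}^*$ with $r_{in}\in\R$ and $a_{in}\in Co(\omega_2)$. Since reals and elements of $Co(\omega_2)$ are the same in $V$ and $V[G]$, a descending $\omega$-chain below $p$ decides all the $r_{in}$ and $a_{in}$; its lower bound $q$ (via Lemma~\ref{sigmaclosedQ}) then forces $\dot f=(\check f)^*$, where $f\in V$ is the uniform limit of the $\sum_i r_{in}\chi_{[a_{in}]}$ in $C_I(K)$ (the sequence is Cauchy in $V$ because it converges in $V[G]$).

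Your separator approach works, but the reconstruction step is more delicate than you indicate. The proof of Lemma~\ref{countabledependence} does not actually contain a reconstruction of $f$ from the $U_{a,b,c,d}$; it only shows $f$ is constant on fibres of the restriction map to $Co(A)$. To complete your argument you need an explicit formula such as $f(x)=\inf\{d_n: x\in [V_n]\}$, then check in $V[G]$ that this agrees with $\dot f$ on $K$, and finally observe that continuity of this $f$ on $K$ is an absolute statement between $V$ and $V[G]$ (the set $K$, its clopen base indexed by $Co(\omega_2)$, and $\R$ are all unchanged by $\sigma$-closed forcing). The paper's simple-function route sidesteps this entirely, since a uniform limit of continuous functions is automatically continuous. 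Both arguments rest on the same idea---$\sigma$-closure lets one decide a countable family of ground-model data that determines $\dot f$---but the paper's choice of data makes the ground-model reconstruction immediate.
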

\begin{proof}
In $V[G]$ there is a sequence of $f_n^*=\sum_{i\in\N} r_{in}\chi_{[a_{in}]}^*$ such that
$f_n$ uniformly converges to $f$ where $r_{in}$s are reals and 
$a_{in}$s are elements of $Co(\omega_2)$.  Both the reals and the elements of
$Co(\omega_2)$  are the same in $V$ and $V[G]$ and so building a decreasing
sequence of conditions of $\Q$ we can successively decide them. Find the lower
bound $q$ of such a sequence, obtained by \ref{sigmaclosedQ} and
define $f$ to be the uniform limit of
 $f_n=\sum_{i\in\N} r_{in}\chi_{[a_{in}]}$s (which must exist since the sequence must be
a Cauchy sequence as it converges in $V[G]$). Two continuous functions agreeing 
on a dense set must be equal, so $q\forces \check f^*=\dot f$.
\end{proof}

\begin{lemma} Suppose that $\mathcal F\subseteq C_I(K)$ is countable,
then $\nabla \mathcal F=\nabla \mathcal F^*$, and so
$C_I(\nabla \mathcal F)=C_I(\nabla \mathcal F^*)$.
\end{lemma}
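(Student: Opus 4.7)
The plan is to use the natural identification of $[0,1]^{\mathcal F}$ with $[0,1]^{\mathcal F^*}$ given by the bijection $f\leftrightarrow f^*$, and to show that under this identification $\nabla\mathcal F\subseteq \nabla\mathcal F^*$, that the inclusion has dense image, and that $\nabla\mathcal F$ is already closed in $V[G]$, so the inclusion must be an equality.

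First I would check the inclusion $\nabla\mathcal F\subseteq \nabla\mathcal F^*$. Since $K\subseteq K^*$ and $f^*$ extends $f$, for every $x\in K$ we have $(\Pi\mathcal F)(x)(f)=f(x)=f^*(x)=(\Pi\mathcal F^*)(x)(f^*)$, so the image of $\Pi\mathcal F$ lies in the image of $\Pi\mathcal F^*$. Next, since $K$ is dense in $K^*$ (recall $K^*$ is the Stone space in $V[G]$ of the same Boolean algebra $Co(\omega_2)$, which has only possibly more ultrafilters in $V[G]$, while the basic clopen sets are unchanged and meet $K$ whenever nonempty), and since $\Pi\mathcal F^*:K^*\to [0,1]^{\mathcal F^*}$ is continuous, the subset $(\Pi\mathcal F^*)[K]=(\Pi\mathcal F)[K]=\nabla\mathcal F$ is dense in $(\Pi\mathcal F^*)[K^*]=\nabla\mathcal F^*$.

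The crucial step is showing that $\nabla\mathcal F$ remains closed in $[0,1]^{\mathcal F}$ when viewed in $V[G]$. Here is where the countability of $\mathcal F$ is used together with the $\sigma$-closedness of $\Q$ established in \ref{sigmaclosedQ}: a $\sigma$-closed forcing adds no new $\omega$-sequences from $V$, and in particular no new reals, so no new points of the metrizable space $[0,1]^{\mathcal F}\cong [0,1]^\omega$ are created. Thus the set $\nabla\mathcal F$, which is compact (hence closed) in $[0,1]^{\mathcal F}$ in $V$, has the same underlying point-set in $V[G]$ and remains closed there. Being closed and dense in the Hausdorff space $\nabla\mathcal F^*$, it must equal $\nabla\mathcal F^*$. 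The equality $C_I(\nabla\mathcal F)=C_I(\nabla\mathcal F^*)$ is then automatic.

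The main obstacle is the transfer of compactness from $V$ to $V[G]$; it is tempting but not automatic, and relies on the combination of the metrizability of $[0,1]^{\mathcal F}$ for countable $\mathcal F$ and the fact that $\Q$ does not add reals. If $\mathcal F$ were allowed to be uncountable the argument would break down, since new points could appear in $[0,1]^{\mathcal F}$ in $V[G]$ and $\nabla\mathcal F$ might fail to be closed in the extension.
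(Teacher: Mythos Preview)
Your argument is correct, but it takes a different route from the paper's proof. The paper argues directly that $\nabla\mathcal F^*\subseteq\nabla\mathcal F$: given $u\in K^*$, it picks a countable subalgebra $\mathcal A\subseteq Co(\omega_2)$ on which every $f\in\mathcal F$ depends, observes that $u\cap\mathcal A$ is a countable subset of $V$ and hence lies in $V$ by $\sigma$-closedness, and then extends $u\cap\mathcal A$ to an ultrafilter $v\in K$ in $V$; since each $f$ depends only on $\mathcal A$, one gets $(\Pi\mathcal F)(v)=(\Pi\mathcal F^*)(u)$. Your approach instead shows $\nabla\mathcal F$ is dense in $\nabla\mathcal F^*$ via the density of $K$ in $K^*$, and then appeals to absoluteness of closedness/compactness for subsets of the Polish space $[0,1]^{\mathcal F}$ under forcing that adds no reals. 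Both arguments ultimately rest on the same fact (countable subsets of $V$ stay in $V$), but the paper applies it at the level of ultrafilters on a countable subalgebra, while you apply it at the level of points and sequences in $[0,1]^\omega$. The paper's argument is slightly more direct and avoids the density-plus-closedness detour; yours makes the role of metrizability and the preservation of compactness more explicit, which is a useful perspective but not strictly needed here.
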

\begin{proof} Let $\mathcal A$ be a countable subalgebra  of $Co(Fr(\omega_2))$ such that
$f$ depends on $\mathcal A$ for every $f\in\mathcal F$. Let $u\in K^*$ be such that
$(\Pi \mathcal F^*)(u)=t\in [0,1]^{\mathcal F^*}$. $u\cap \mathcal A$  belongs
to $V$ as it is a countable subset of $V$. Now extend $u$ to any ultrafilter
$v$ of $Co(Fr(\omega_2))$ which is in $V$, we have $(\Pi\mathcal F)(v)(f)=(\Pi\mathcal F^*)(u)(f^*)$.
\end{proof}

\begin{lemma}\label{decidingfunctions}
 Suppose  that $p\in\Q$ and $p\forces \dot f\in C_I(\nabla \mathcal F_G^*)$.
There is $q\leq p$ in $\Q$ and $g\in C_I(\nabla\mathcal F_q)$ such that
$$q\forces \check g\circ\pi_{\mathcal F_q^*, \mathcal F_G^*}=\dot f$$.
\end{lemma}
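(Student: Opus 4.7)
The plan is to adapt the proof of Lemma \ref{fstarlemma} to $C_I(\nabla\mathcal F_G^*)$ by applying Stone--Weierstrass to the coordinate evaluations. In $V[G]$ the maps $t \mapsto t(f^*)$ for $f \in \mathcal F_G$ separate points of $\nabla\mathcal F_G^*$, so any $\dot f \in C_I(\nabla\mathcal F_G^*)$ is a uniform limit of rational-coefficient polynomials $P_n$ applied to finite tuples $(t(f_{n,1}^*), \ldots, t(f_{n,k_n}^*))$ with $f_{n,i} \in \mathcal F_G$. By Lemma \ref{sigmaclosedQ} the forcing $\Q$ is $\sigma$-closed and so adds no new reals nor new elements of $C_I(K)$; therefore both the $P_n$ and the $f_{n,i}$ may be assumed to come from the ground model.

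In $V$, I construct by induction a decreasing sequence $(q_n)_{n \in \N}$ in $\Q$ with $q_1 = p$ such that $q_{n+1}$ decides ground-model data $P_n$ and $f_{n,1}, \ldots, f_{n,k_n} \in C_I(K)$, forces the corresponding approximation error to be less than $2^{-n}$, and arranges $f_{n,1}, \ldots, f_{n,k_n} \in \mathcal F_{q_{n+1}}$. The absorption step is by a standard density argument: $q_{n+1}$ forces $\check f_{n,i} \in \mathcal F_G$, so there is a ground-model condition $r$ with $f_{n,i} \in \mathcal F_r$ that $q_{n+1}$ forces into $G$; in particular $q_{n+1}$ and $r$ are compatible and one refines below both. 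Let $q \leq p$ be the $\sigma$-closed lower bound of $(q_n)$ supplied by Lemma \ref{sigmaclosedQ}; by construction $\mathcal F_q \supseteq \{f_{n,i} : n \in \N,\ 1 \leq i \leq k_n\}$.

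Define in $V$ the functions $g_n \in C_I(\nabla\mathcal F_q)$ by $g_n(t) = P_n(t(f_{n,1}), \ldots, t(f_{n,k_n}))$, truncated to $[0,1]$ if necessary. Pullback along the surjection $\pi_{\mathcal F_q^*, \mathcal F_G^*}$ preserves sup-norm, and $g_n \circ \pi_{\mathcal F_q^*, \mathcal F_G^*}$ is forced to converge uniformly to $\dot f$; therefore $(g_n)$ is uniformly Cauchy in $V$ by absoluteness of statements about reals. Let $g \in C_I(\nabla\mathcal F_q)$ be its uniform limit. In $V[G]$ both $\check g \circ \pi_{\mathcal F_q^*, \mathcal F_G^*}$ and $\dot f$ are uniform limits of the common sequence $g_n \circ \pi_{\mathcal F_q^*, \mathcal F_G^*}$, hence equal, giving $q \forces \check g \circ \pi_{\mathcal F_q^*, \mathcal F_G^*} = \dot f$. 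The principal subtlety is the absorption step at each induction stage, which relies on the forcing-definability of $\mathcal F_G$ together with $\sigma$-closedness of $\Q$.
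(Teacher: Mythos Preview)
Your proof is correct and follows essentially the same strategy as the paper: use $\sigma$-closedness to decide countably much data determining $\dot f$, absorb the relevant coordinate functions into some $\mathcal F_q$, and then recognize $\dot f$ as the pullback of a ground-model function on $\nabla\mathcal F_q$. The paper organizes this slightly differently---it first argues (as in Lemma~\ref{countabledependence}) that $\dot f$ depends on countably many coordinates of $\mathcal F_G^*$, absorbs those into $\mathcal F_{q'}$, and then invokes the previous lemma to decide the resulting function on the countable product---whereas you make the approximation explicit via Stone--Weierstrass and take a uniform limit; but the content is the same.
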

\begin{proof}
As in \ref{countabledependence} one can prove that $f$ depends on countably many
coordinates in $\mathcal F_G^*$.  One can decide this countable set. Using the compatibility
of all elements in the generic $G$ and the fact that $\Q$ is $\sigma$-closed one can
built $q'\leq p$ such  that 
$$q'\forces \dot f= \dot g\circ \pi_{F_{q'}^*, \mathcal F_G^*},\ \ \  \dot g\in C_I(\nabla\mathcal F_{q'}^*).$$
So now find $q\leq q'$ which decides $\dot g$ as $g$ using the previous lemma.
\end{proof}

\section{The construction and the properties of the  space}

\begin{lemma}\label{finallemma} 
The compact space $L$ with its subset $\mathcal X$
 has the following properties in $V[G]$:
\begin{enumerate}[(A)]
\item The weight 
of $L$ is $\omega_2$.
\item $\mathcal X=\{x_\beta:\beta<\omega_1\}$ is a dense subset of $L$
\item  Given
\begin{enumerate}
\item a sequence of  pairwise disjoint 
elements $(f_n:n\in N)$ of $C_I(L)$,\par
\item a sequence $(\xi_m:m\in N)\subseteq\omega_1$
such that $f_n(x_{\xi_m})=0$ for all $n,m\in N$,
and $\{x_{\xi_m}: m\in N\}$ is relatively discrete, \par

\end{enumerate}
  there  is an
 infinite  $A'\subseteq \N$  
such that 
\begin{enumerate}[(1)]
\item The supremum
$f=sup\{f_n:\ n\in A'\}$ exists in $C_I(L)$,
\item $\overline{\{x_{\xi_m}: m\in A'\}}\cap \overline{\{x_{\xi_m}: m\not\in A'\}}\not=\emptyset$
\end{enumerate}

\item If $U_1$, $U_2$ are two
open subsets of $L$
then ${\overline {U_1}}\cap {\overline{U_2}}=\emptyset$ or 
${\overline {U_1}}\cap {\overline{U_2}}$ has 
at least two points.
\end{enumerate}
\end{lemma}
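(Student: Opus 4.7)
The plan is to dispatch (A) and (B) by direct genericity arguments, prove (C) via a density argument inside $\Q$ using the Main Extension Lemma, and derive (D) from (C) by contradiction.

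For (A), the upper bound $w(L) \leq \omega_2$ is immediate from $L \subseteq [0,1]^{\mathcal F_G^*}$ and $|\mathcal F_G| \leq \omega_2$. For the lower bound, observe that for each limit $\alpha < \omega_2$ the set of $p \in \Q$ with $\alpha \in A_p$ is dense (one extends any given $p$ by adjoining $[\alpha,\alpha+\omega)$ to $A_p$ and $d_\alpha$ to $\mathcal F_p$; connectedness is preserved via the lemma giving $\nabla(\mathcal F_p \cup \{d_\alpha\}) = \nabla \mathcal F_p \times [0,1]$). By genericity $\{d_\alpha : \alpha < \omega_2 \text{ limit}\} \subseteq \mathcal F_G$, and Lemma~\ref{nablaofdealphas} combined with the projection of $L$ onto this subproduct yields $w(L) \geq \omega_2$. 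For (B), any basic open $U \subseteq L$ depends on finitely many $f \in \mathcal F_G^*$; pick $p \in G$ with these in $\mathcal F_p^*$, use density of $\mathcal X_p$ in $\nabla \mathcal F_p$ from Definition~\ref{definitionQ}(3), and lift to $x_\beta \in \mathcal X$.

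For (C), I first apply Lemma~\ref{decidingfunctions} together with the $\sigma$-closedness of $\Q$ (Lemma~\ref{sigmaclosedQ}) to build a descending $\omega$-chain in $G$ whose lower bound $p \in G$ satisfies: there are pairwise disjoint $g_n \in C_I(\nabla \mathcal F_p)$ with $f_n = g_n \circ \pi_{\mathcal F_p^*, \mathcal F_G^*}$, each $\xi_m < \alpha_p$, the set $\{x_{\xi_m}^p\}$ is relatively discrete in $\nabla \mathcal F_p$, and $g_n(x_{\xi_m}^p)=0$. I then claim that the set
$$D = \{q \leq p : \exists\, A' \subseteq \N \text{ infinite and coinfinite with } q \leq q_{A'}\},$$
where $q_{A'}$ is produced by Lemma~\ref{mainextensionlemma}, is dense below $p$: given any $r \leq p$, apply~\ref{mainextensionlemma} at $r$ with the pulled-back functions $g_n \circ \pi_{\mathcal F_p, \mathcal F_r}$ and the same $\xi_m$, and select any infinite coinfinite $A'$ contained in the infinite $A \subseteq \N$ returned by the lemma. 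Genericity furnishes $q \in G \cap D$. Property (a) of~\ref{mainextensionlemma} provides an indestructible supremum of $(f_n \circ \pi)_{n \in A'}$, which by Definition~\ref{definitionsupindes} remains the supremum when extended to $L$, giving (C)(1). Property (b) adjoins the promise $(L', R')$ to $\mathcal P_q$, and this promise persists in every $\mathcal P_r$ for $r \in G$ with $r \leq q$; since any hypothetical open separator in $L$ is determined by finitely many coordinates and hence by some $r \in G$, the promise in $\mathcal P_r$ forbids the separation, yielding (C)(2).

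For (D), argue by contradiction. Assume $\overline{U_1} \cap \overline{U_2} = \{x\}$; connectedness of $L$ (from (6) of Definition~\ref{definitionP} in the limit) forces $x \notin U_1 \cup U_2$, else $\{x\}$ would be isolated. Using density of $\mathcal X$ and the regularity of $L$, extract a countable relatively discrete sequence $\{z_n\} \subseteq U_1 \cap \mathcal X$ with $x$ as its unique accumulation point, so that $x \in \overline{\{z_n : n \in B\}}$ for every infinite $B \subseteq \N$, and similarly pick a discrete sequence $\{x_{\xi_m}\}_{m \in \N} \subseteq U_2 \cap \mathcal X$. Since $z_n \in U_1 \setminus \overline{U_2}$ and $\{z_n\}$ is relatively discrete, Urysohn produces pairwise disjoint $f_n \in C_I(L)$ with $f_n(z_n)=1$, $\mathrm{supp}(f_n) \subseteq U_1 \setminus \overline{U_2}$, and mutually disjoint supports; then $f_n(x_{\xi_m}) = 0$. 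Apply (C) to obtain an infinite coinfinite $A' \subseteq \N$ with $f = \sup_{n \in A'} f_n \in C_I(L)$. By support disjointness, $f(z_n) = 1$ for $n \in A'$ and $f(z_n) = 0$ for $n \notin A'$; since $x$ lies in the closure of both subsequences, $f$ cannot be continuous at $x$, a contradiction.

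The main obstacle is (C), where the Main Extension Lemma must be threaded through the forcing: one must verify both the transfer of data between the metrizable approximations $\nabla \mathcal F_p$ in $V$ and the non-metrizable limit $L$ in $V[G]$, and that the indestructible supremum together with the preserved promise genuinely deliver continuous suprema and intersecting closures in $L$. A subsidiary technical point in (D) is the extraction of a countable discrete sequence in $U_i \cap \mathcal X$ with $x$ as its unique accumulation point; this rests on a countable tightness property of $L$ inherited from the metrizable approximations and the careful generic construction of $\mathcal X$.
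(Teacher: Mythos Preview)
Your treatment of (A), (B), and (C) is essentially the paper's argument: the weight via the $d_\alpha$'s, density of $\mathcal X$ from clause~(3) of Definition~\ref{definitionQ}, and (C) by deciding the data down to a condition and invoking the Main Extension Lemma, with the indestructible supremum and the promise mechanism carrying through to $L$. That part is fine.

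Your argument for (D), however, has a genuine gap. You need to extract a sequence $(z_n)$ in $U_1 \cap \mathcal X$ with $x$ as its \emph{unique} accumulation point, i.e., with $x \in \overline{\{z_n : n \in B\}}$ for every infinite $B$; this is exactly the statement that $z_n \to x$. But $L$ has no nontrivial convergent sequences: such a sequence would yield a complemented copy of $c_0$ in $C(L)$, contradicting indecomposability (the paper makes this explicit in the proof of Theorem~\ref{finaltheorem}(4)). Countable tightness, even if it held for $L$, would only give you a countable set whose closure contains $x$, not a convergent sequence; so the ``subsidiary technical point'' you flag is in fact fatal. The auxiliary claim that connectedness forces $x \notin U_1 \cup U_2$ is also not justified as stated: from $x \in U_1$ and $\overline{U_1}\cap\overline{U_2}=\{x\}$ one only deduces that $x$ is isolated in $\overline{U_2}$, not in $L$.

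The paper proves (D) by a completely different, direct genericity argument. Using that $L$ is c.c.c., one reduces $U_1, U_2$ to preimages of open sets $V_1, V_2 \subseteq \nabla \mathcal F_p$ for some $p \in \Q$. One then takes an isomorphic copy $p'$ of $p$ with $A_p < A_{p'}$ and amalgamates to $q \leq p, p'$; by Corollary~\ref{disjointultrafilters} one has $\nabla \mathcal F_q \cong \nabla \mathcal F_p \times \nabla \mathcal F_{p'}$. Any $t \in \overline{V_1} \cap \overline{V_2}$ together with any two distinct points $y_1, y_2 \in \nabla \mathcal F_{p'}$ yields two distinct points $(t, y_1), (t, y_2)$ in $\overline{U_1} \cap \overline{U_2}$ at the level of $q$, and these persist in $L$.
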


\noindent PROOF: 
\noindent (A) follows from \ref{nablaofdealphas}, \ref{definitionP} (2) and the fact that
$\bigcup\{A_p: p\in G\}$  contains unbounded in $\omega_2$ set of limit ordinals
by the standard density argument, which can be easily
obtained from \ref{amalgamationQ}. (B) follows from the standard density arguments since 
we can increase $\alpha_p$s arbitrarily in $\omega_1$.

\noindent To get (C) work in $V$ and
fix $p\in \Q$. Let ${\dot f_n}$'s, ${\dot x}_{\xi_m}$'s,
 be $\Q$-names for the objects mentioned
in items a) - b) of (C). We will produce
$q\leq p$ which will force (1) - (2) of the lemma.
By \ref{decidingfunctions} and \ref{sigmaclosedQ} there is $q'\leq p$
and $\xi_m$s 
and functions $g_n:\nabla F_{q'}\rightarrow [0,1]$ such that 
$\xi_m<\alpha_{q'}$, $q'\forces \check \xi_m=\dot \xi_m$, and such that
$$q'\forces \check g_n\circ \pi_{\mathcal F_{q'}^*, \mathcal F_G^*}=\dot f_n.$$
It follows that $g_n$s are pairwise disjoint and that $g_n(x_{\xi_m}^{q'})=0$
for all $m,n\in \N$.

Now use \ref{mainextensionlemma} to find an extension $q\leq q'$ where we have an
indestructible supremum of $g_n$'s for $n\in A'$ and a promise that
 $\overline{\{x_{\xi_m}: m\in A'\}}\cap \overline{\{x_{\xi_m}: m\not\in A'\}}\not=\emptyset.$
The supremum and the above nonempty intersection of
the closures remain in $\nabla \mathcal F_G^*=L$ because if they failed this would be
witnessed by some countable $\mathcal F\subseteq \mathcal F_G^*$ (as continuous functions
depend on countably many coordinates and separations of closed sets can be 
obtained by two unions with disjoint closures of
 finitely many basic open sets) and we could
use the $\sigma$-closure of $\Q$ and the compatibility of conditions in the
generic $G$ to obtain $s\leq q$ such that the supremum or the condition about
the closures are destroyed in $\nabla \mathcal F_s$ which is impossible
since the supremum is indestructible and the promise preserved by  the definition
\ref{definitionQ} of the order in $\Q$.

To prove (D), first note that $L$ is c.c.c. as a continuous image of a c.c.c. space
$K=Co(\omega_2)$, so we may assume that $U_1$ and $U_2$ are unions of countably many
basic sets. Deciding them all and using \ref{sigmaclosedP} we may assume that they are all
determined by coordinates of $\nabla \mathcal F_p$ for some $p\in\Q$. I.e. that we have $V_1, V_2$
open subsets of $\nabla \mathcal F_p$ such that $U_i=\pi_{\mathcal F_p,\dot{\mathcal F}}^{-1}[V_i]$ for $i=1,2$,
where $\Q$ forces that $\dot{\mathcal F}=\{\mathcal F_p: p\in G\}$.
Of course we must have $\overline V_1\cap\overline V_2\not=\emptyset$.
 Now find an $A\subseteq \omega_2$
such that $A_p<A$ and there is an order preserving bijection $\tau: A_p\rightarrow A$.
Transport all the structure of $p$ to $A$ by $\tau$ obtaining a condition $p'\in \Q$ such that
$\tau$ witnesses that $p$ and $p'$ are isomorphic. Amalgamate them according to
\ref{amalgamationP} obtaining $q\leq p,p'$. 
Using \ref{disjointultrafilters} it is easy to prove that 
$$\nabla F_q=(\nabla F_p)\times (\nabla F_{p'}).$$
So taking two distinct points $x,y\in \nabla F_{p'}$ we have that
$(t, x)\in \overline  V_1 \times \{x\}\cap \overline V_2 \times \{x\}\subseteq \overline U_1\cap \overline U_2$ and 
$(t, y)\in \overline V_1 \times \{y\}\cap \overline V_2 \times \{y\} \subseteq \overline U_1\cap \overline U_2$ for any
$t\in \overline V_1\cap\overline V_2$. So, $(t,x), (t,y)\in \overline U_1\cap\overline U_2$
completing the proof of (C).

\begin{theorem}\label{finaltheorem} $L$  is a compact space such that
\begin{enumerate}
\item The density of $C(K)$ is $2^{2^\omega}>2^\omega$.
\item Every linear bounded operator $T: C(L)\rightarrow C(L)$ is of the form
$T(f)=gf+S(f)$ where $g\in C(K)$ and $S$ is weakly compact (strictly singular)
\item $C(K)$ is an indecomposable Banach space, in particular it has no
complemented subspaces of density less or equal to  $2^\omega$,
\item $C(K)$ is not isomorphic to any of its proper subspaces nor any
of its proper quotients.
\end{enumerate}
\end{theorem}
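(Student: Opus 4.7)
The plan is to derive properties (1)--(4) of $L$ from the combinatorial and topological properties (A)--(D) established in Lemma~\ref{finallemma}, following the operator-theoretic blueprint of \cite{few} and refined in \cite{big}.

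First, (1) is immediate: by (A) the weight of $L$ is $\omega_2$, and for compact Hausdorff $L$ one has $d(C(L))=w(L)$. The forcing is $\sigma$-closed and $\omega_2$-c.c., hence preserves $2^\omega=\omega_1$ and $2^{\omega_1}=\omega_2$ from the ground model, so $d(C(L))=\omega_2=2^{2^\omega}>\omega_1=2^\omega$.

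The heart of the argument is (2). For a bounded $T\colon C(L)\to C(L)$, I would set $g(x)=T^{*}(\delta_x)(\{x\})$ and $S(f)=T(f)-gf$ and prove separately that $g\in C(L)$ and that $S$ is weakly compact, following the pattern of \cite{few}. Both assertions are proved by contradiction: a failure of either produces, by standard disjointification and the density from (B), a uniformly bounded pairwise disjoint sequence $(f_n)\subseteq C_I(L)$ and witnessing points $x_{\xi_m}\in\mathcal X$ with $f_n(x_{\xi_m})=0$ for $n\ne m$, such that $T(f_n)(x_{\xi_n})$ is bounded away from zero. Property (C) then yields an infinite $A'\subseteq\N$ such that $f:=\sup_{n\in A'}f_n$ exists in $C(L)$ and such that $\overline{\{x_{\xi_m}:m\in A'\}}\cap\overline{\{x_{\xi_m}:m\notin A'\}}\ne\emptyset$. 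Picking $z$ in this intersection, continuity of $T(f)\in C(L)$ at $z$ demands a single value, but the ``on-regime'' net (indices in $A'$) and the ``off-regime'' net (indices outside $A'$) see systematically different values of $T(f)$, delivering the required contradiction.

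For (3), let $P$ be a projection on $C(L)$. By (2), $P=M_g+S$ with $M_g$ multiplication by $g\in C(L)$ and $S$ weakly compact. Expanding $P^2=P$ and using that the weakly compact operators form a two-sided ideal, $M_{g^2-g}$ is weakly compact; but a multiplication operator on $C(L)$ is weakly compact only when its multiplier vanishes, so $g^2=g$ pointwise and $g=\chi_{U}$ for a clopen $U\subseteq L$. Connectedness of $L$ then closes the case: any finite subcollection of $\mathcal F_G^{*}$ lies in some $\mathcal F_p^{*}$ with $p\in G$, and $\nabla\mathcal F_p$ is connected by clause~(6) of Definition~\ref{definitionP}, so by Lemma~\ref{subproductsconnected} $L$ is connected and $U\in\{\emptyset,L\}$. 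Hence $P$ or $I-P$ is weakly compact, and a weakly compact projection on $C(L)$ has finite-dimensional range, which gives indecomposability and excludes any complemented subspace of density at most $2^\omega$. Part (4) is a corollary of (2), connectedness, and (D): an isomorphism of $C(L)$ onto a proper subspace would, through the form $M_g+S$, force $g$ to be invertible modulo the weakly compact part, which combined with (D) and connectedness rules out proper subspaces, proper quotients, and hyperplanes via the standard dichotomy (see Theorem~23 of \cite{fewsur}), and the quotient case passes through $T^{*}$.

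The main obstacle will be the weak-compactness half of (2). Manufacturing the disjoint sequence $(f_n)$ together with witnesses $x_{\xi_m}$ actually lying in the distinguished dense set $\mathcal X$ (so that (C) is applicable) is more delicate than the abstract disjointification of \cite{few}, and the subsequent continuity argument at the common closure point $z$ guaranteed by (C)(2) is where property~(D), excluding degenerate single-point intersections of closures, must be exploited to keep the two approaching nets genuinely separated in the range of $T(f)$.
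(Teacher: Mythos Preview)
Your overall plan---derive (1)--(4) from (A)--(D) of Lemma~\ref{finallemma} via the machinery of \cite{few}---is the paper's strategy, and your treatments of (1) and (3) are essentially correct. There is, however, a genuine gap in your handling of (2), specifically in the role you assign to property~(D).

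The paper's route for (2) has two \emph{independent} steps. First, property~(C) is used (exactly as in Lemma~5.2 of \cite{few}) to show that every bounded operator on $C(L)$ is a \emph{weak multiplier}. By the general theory in \cite{few}, 2.1--2.3, this alone yields $T=gI+S$ with $S$ weakly compact but $g$ only a bounded Borel function. The continuity of $g$ is a separate matter: it requires that $L\setminus\{x\}$ be $C^*$-embedded in $L$ for every $x\in L$, and this is exactly what (D) delivers via Lemma~2.8 of \cite{few} (a singleton intersection $\overline{U_1}\cap\overline{U_2}=\{x\}$ would produce a bounded continuous function on $L\setminus\{x\}$ with no continuous extension). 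With $C^*$-embedding in hand, Lemma~2.7 of \cite{few} upgrades $g$ to an element of $C(L)$. Your proposal instead inserts (D) into the contradiction argument itself, to ``keep the two approaching nets genuinely separated in the range of $T(f)$''---but (D) asserts that intersections of closures are \emph{large} (at least two points), which is the opposite of separation and plays no role at that point. Without the $C^*$-embedding step you have not secured $g\in C(L)$.

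Two smaller points. Your hypothesis $f_n(x_{\xi_m})=0$ for $n\ne m$ does not match the input of (C), which demands it for \emph{all} $n,m$; a preliminary thinning is needed (each $x_{\xi_m}$ lies in at most one support by disjointness, so a standard colouring of the resulting functional graph on $\N$ produces an infinite subset on which all cross-values vanish). For (4), the paper's argument is shorter than yours: once (3) is known, $L$ can have no nontrivial convergent sequence (else $c_0$ would be complemented in $C(L)$), and then Theorem~2.3 of \cite{few} applies directly to give the conclusion about proper subspaces and quotients.
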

\begin{proof}
(1) follows from (A) of \ref{finallemma}, a standard counting argument in a generic extension (see \cite{kunen})
and the fact that  $\Q$ does not add new reals to a model of CH.
 (2) is proved the following way: first we prove that every
operator on $C(L)$ is a weak multiplier (see of 2.1 and 2.2. of  \cite{few}) exactly the same way as
Lemma 5.2 of \cite{few}, then we use the fact that for every $x\in L$ the space
$L\setminus\{x\}$ is $C^*$-embedded in $L$ which follows from C) of \ref{finallemma}
and Lemma 2.8. of \cite{few}. Now Lemma 2.7 of \cite{few} implies (2) of our theorem.
(3) follows, for example, from Lemma 3.4. of \cite{rogerio}.
(4) follows from 2.3 of \cite{few} and the fact that of course, $L$ cannot have a
nontrivial  convergent sequence
which would give a complemented copy of $c_0$ contradicting (3).
\end{proof}

\bibliographystyle{amsplain}

\end{document}